\numberwithin{equation}{section}
\newenvironment{Abstract}{\textbf{Abstract}\mbox{  }}{ }
\newenvironment{key words}{\textbf{Keywords}\mbox{  }}{ }
\newtheorem{theorem}{Theorem}[section]
\newtheorem{definition}[theorem]{Definition}
\newtheorem{lemma}[theorem]{Lemma}
\newtheorem{corollary}[theorem]{Corollary}
\newtheorem{proposition}[theorem]{Proposition}
\renewenvironment{proof}{\noindent{\textbf{Proof.}}}{\hfill$\Box$}
\newtheorem{remark}[theorem]{Remark}
\newcommand{\ud}{\mathrm{d}}
\newcommand{\R}{\mathbb R}
\newcommand{\lab}{\label}
\newcommand{\bt}{\begin{theorem}}
\newcommand{\et}{\end{theorem}}
\newcommand{\bl}{\begin{lemma}}
\newcommand{\el}{\end{lemma}}
\newcommand{\bd}{\begin{definition}}
\newcommand{\ed}{\end{definition}}
\newcommand{\bc}{\begin{corollary}}
\newcommand{\ec}{\end{corollary}}
\newcommand{\bp}{\begin{proof}}
\newcommand{\ep}{\end{proof}}
\newcommand{\bx}{\begin{example}}
\newcommand{\ex}{\end{example}}
\newcommand{\bi}{\begin{exercise}}
\newcommand{\ei}{\end{exercise}}
\newcommand{\bo}{\begin{proposition}}
\newcommand{\eo}{\end{proposition}}
\newcommand{\br}{\begin{remark}}
\newcommand{\er}{\end{remark}}
\newcommand{\beq}{\begin{equation}}
\newcommand{\eeq}{\end{equation}}
\newcommand{\ba}{\begin{align}}
\newcommand{\ea}{\end{align}}
\newcommand{\bn}{\begin{enumerate}}
\newcommand{\en}{\end{enumerate}}
\newcommand{\bg}{\begin{align}}
\newcommand{\bcs}{\begin{cases}}
\newcommand{\ecs}{\end{cases}}
\newcommand{\bean}{\begin{eqnarray*}}
\newcommand{\eean}{\end{eqnarray*}}
\def\N{\mathbb{N}}
\def\R{\mathbb{R}}
\def\bd{\mathrm{bd}\,}
\begin{document}
\title[The mass-mixed case for NLS in dimension two]{The mass-mixed case for normalized solutions to NLS equations in dimension two}
\author[D.~Cassani]{Daniele Cassani$^*$}
\author[L.~Huang]{Ling Huang}
\author[C.~Tarsi]{Cristina Tarsi}
\author[X.~X.~Zhong]{Xuexiu Zhong$^{**}$}
\thanks{$^*$ Corresponding author: \texttt{daniele.cassani@uninsubria.it}}
\address[D.~Cassani]{\newline\indent Dip. di Scienza e Alta Tecnologia
\newline\indent
Universit\` a degli Studi dell'Insubria \&
\newline\indent
RISM-Riemann International School of Mathematics
\newline\indent
villa Toeplitz, Via G.B. Vico, 46-21100 Varese, Italy}
\email{\href{mailto:daniele.cassani@uninsubria.it}{daniele.cassani@uninsubria.it}}

\address[L.~Huang]{\newline\indent School of Mathematical Sciences
\newline\indent
South China Normal University
\newline\indent
Guangzhou 510631, PR China}
\email{\href{mailto:996987952@qq.com}{996987952@qq.com}}

\address[C.~Tarsi]{\newline\indent Dipartimento di Matematica
\newline\indent
Universit\` a degli Studi di Milano
\newline\indent
Via C. Saldini, 50-20133 Milano, Italy}
\email{\href{mailto:cristina.tarsi@unimi.it}{cristina.tarsi@unimi.it}}

\address[X.~X.~Zhong]{\newline\indent South China Research Center for Applied Mathematics and Interdisciplinary Studies
\newline\indent
\&  School of Mathematical Sciences, South China Normal University
\newline\indent
Guangzhou 510631, PR China}
\email{\href{mailto:zhongxuexiu1989@163.com}{zhongxuexiu1989@163.com}}

\thanks{$^{**}$Xuexiu Zhong was supported by the NSFC (No.12271184), Guangzhou Basic and Applied Basic Research Foundation(2024A04J10001).}

\maketitle

\noindent
{\small
\begin{Abstract}
\noindent We are concerned with positive normalized solutions $(u,\lambda)\in H^1(\mathbb{R}^2)\times\mathbb{R}$ to the following semi-linear Schr\"{o}dinger equations
$$
-\Delta u+\lambda u=f(u), \quad\text{in}~\mathbb{R}^2,
$$
satisfying the mass constraint $\int_{\R^2}|u|^2\mathrm{d}x=c^2$. We are interested in the so-called mass mixed case in which $f$ has $L^2$-subcritical growth at zero and critical growth at infinity, which in dimension two turns out to be of exponential rate. Under mild conditions, we establish the existence of two positive normalized solutions provided the prescribed mass is sufficiently small: one is a local minimizer and the second one is of mountain pass type. We also investigate the asymptotic behavior of solutions approaching the zero mass case, namely when $c\to 0^+$.
\end{Abstract}\\
\begin{key words}
Nonlinear Schr\"{o}dinger equation, Exponential critical growth, Positive normalized solution, General nonlinearities, Trudinger-Moser inequality, Asymptotic behavior.
\end{key words}\\
{\bf Mathematics Subject Classification (2020):}~35A15, 35J10, 35B09, 35B33.
}

\indent

\section{Introduction and main results}
\noindent We consider the following problem
\begin{numcases}{}
&$-\Delta u+\lambda u =f(u), \quad u\in H^1(\R^2),\quad u>0$ \label{20230228e1}\\
&$\int_{\R^2}|u|^2\mathrm{d}x =c^2$ \label{20230228e2}
\end{numcases}
where $\lambda\in\R$ is unknown, appearing as a Lagrange multiplier and related to the value $c$ in condition \eqref{20230228e2} which represents the conservation of mass in the Non-Linear Schr\"odinger (NLS for brevity) equation. On the one hand, this problem in dimension $N\geq 3$ has recently attracted attention due to the connection with Physics and in particular the stability properties of solutions in the time dependent Schr\"odinger equation. On the other hand, the mathematical challenge has roots in the Gagliardo-Nirenberg inequality which gives rise to the so-called $L^2$-critical exponent $\bar{p}=2+4/N$. Indeed, for $p\in(2,\bar{p})\cup(\bar{p},2^*)$, problem \eqref{20230228e1}-\eqref{20230228e2} with $f(u)=u^{p-1}$ has exactly one solution for any $c>0$, whence for $p=\bar{p}$ a one parameter family of solutions shows up provided $c$ is explicitly given in terms of the best constant in the following Gagliardo-Nirenberg inequality:
$$\|u\|_{\bar{p}}^{\bar{p}}\leq C_{N,p}\|\nabla u\|_2^2\|u\|_2^{\frac{4}{N}}\ .$$
This phenomenon resembles what happens for \eqref{20230228e1} in the Sobolev critical case, namely when $f(u)=u^{2^*-1}$, and this is due to the appearance of the invariance under the group action $u_{\lambda}(x)=\lambda^{N/2}u(\lambda x)$. One has $L^2$-subcritical (equivalently mass subcritical), critical or supercritical growth, accordingly with $p<\bar{p}$, $p=\bar{p}$ or $p>\bar{p}$. Let us refer to \cite{Bartsch2016,Cazenave1982,Cid2001,Colin2010} and references therein. In particular, in the spirit of Brezis-Nirenberg, it is interesting to understand what happens in presence of critical growth when lower order perturbations are taken into account, see \cite{Soave2020,Soave2020a}. In dimension two, the notion of criticality goes beyond any power like growth up to the exponential. In this paper we aim at considering the case of nonlinearities which have mass subcritical growth near zero and critical exponential growth at infinity in the sense of \cite{Figueiredo1995} (see $(\mathbf{f_2})$ below), the so-called mass-mixed case.

\noindent Under suitable assumptions on $f$, solutions to \eqref{20230228e1}-\eqref{20230228e2} can be obtained as critical points of the $C^1$-functional $I:H^1(\R^2)\to\R$ defined by
$$
I(u):=\frac{1}{2}\|\nabla u\|_{2}^2-\int_{\R^2}F(u)\mathrm{d}x,
$$
where $F$ is the primitive function of $f$ vanishing at zero, constrained on the sphere
\begin{align*}
S_c:=\{u \in H^1(\mathbb{R}^2):\|u\|_2^2=c^2\}.
\end{align*}

\noindent Since we are interested in positive solutions, let $f(s)=0$ for $s\leq 0$ and let us state the following assumptions:
\begin{itemize}
\item[$\mathbf{\mathbf{(f_1)}}$] $f(0)=0,~f(s)>0$ for $s>0$ and there exists some $p\in (2,4)$ such that $$0<\liminf_{s\to 0^+}\frac{f(s)}{s^{p-1}}\leq \limsup_{s\to 0^+}\frac{f(s)}{s^{p-1}}<+\infty\ ;$$

\item[$\mathbf{\mathbf{(f_2)}}$] There exists $\alpha_0>0$ such that
$$
\lim\limits_{s\to +\infty}\frac{f(s)}{e^{\alpha s^2}}=0,~~\text{for}\ \alpha>\alpha_0
$$
and
$$
\lim\limits_{s\to +\infty}\frac{f(s)}{e^{\alpha s^2}}=+\infty,~~\text{for}\ \alpha<\alpha_0\ ;
$$

\item[$\mathbf{\mathbf{(f_3)}}$] $\lim\limits_{s\to\infty}\frac{F(s)}{f(s)s}=0$;

\item[$\mathbf{\mathbf{(f_4)}}$] There exists $\beta_0>0,$ such that $\liminf\limits_{s \rightarrow+\infty} \frac{f(s)s}{e^{\alpha_0s^2}}\geq\beta_0$;

\item[$\mathbf{(f_5)}$] $f(s)$ is non-decreasing for $s>0$.
\end{itemize}
\subsection{Main results}
\begin{theorem}\label{theorem 1.1}
Under the assumptions $\mathbf{(f_1)}$ and $\mathbf{(f_2)}$, there exists $c_0>0$ such that for any $c \in(0, c_0)$, $\left.I\right|_{S_{c}}$ has a local minimizer $u_c$, which is a positive, radially symmetric decreasing function and there exists $\lambda_c>0$ such that \eqref{20230228e1} holds.
\end{theorem}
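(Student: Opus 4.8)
The plan is to realize $u_c$ as a constrained minimizer of $I$ in a small ball of $S_c$, exploiting the $L^2$-subcritical behaviour near the origin encoded in $(\mathbf{f_1})$ together with the Trudinger--Moser control of the exponential nonlinearity from $(\mathbf{f_2})$. First I would fix $\rho>0$ small, with $\alpha_0\rho$ below the critical Trudinger--Moser level $4\pi$, and set
$$m(c):=\inf\{I(u): u\in S_c,\ \|\nabla u\|_2^2\le\rho\}.$$
From $(\mathbf{f_1})$ and $(\mathbf{f_2})$ one derives, for suitable $q>p$ and $\alpha>\alpha_0$, the bound $F(s)\le C(|s|^p+|s|^q(e^{\alpha s^2}-1))$. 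Combining the Gagliardo--Nirenberg inequality $\|u\|_p^p\le C\|\nabla u\|_2^{p-2}\|u\|_2^2=Cc^2\|\nabla u\|_2^{p-2}$ with a H\"older estimate and the Trudinger--Moser inequality (legitimate since $\alpha_0\rho<4\pi$) yields, on the admissible set, $I(u)\ge \tfrac12\|\nabla u\|_2^2-Kc^2\|\nabla u\|_2^{p-2}-o(\|\nabla u\|_2^2)$. Writing $t=\|\nabla u\|_2^2$ and $\gamma=(p-2)/2\in(0,1)$, the right-hand side behaves like $g(t)=\tfrac12 t-Kc^2t^{\gamma}$, which is negative for small $t$ and strictly positive at $t=\rho$ once $c$ is small; this is the potential well.

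I would then set up the well rigorously: the bound above gives $\inf\{I(u):u\in S_c,\ \|\nabla u\|_2^2=\rho\}\ge\delta>0$, while testing with the mass-preserving dilations $w_t(x)=t\,w(tx)$ of a fixed bounded $w\in S_c$ gives $I(w_t)\le \tfrac12 t^2\|\nabla w\|_2^2-C t^{p-2}<0$ for small $t$ (here $p-2<2$ is crucial), so $m(c)<0$. Hence every minimizing sequence eventually has gradient norm bounded away from $\rho$, and a minimizer will be an interior point, i.e. a genuine local minimizer of $I|_{S_c}$.

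The heart of the proof is compactness. Given a minimizing sequence I would replace each term by its modulus and then by its Schwarz symmetrization; since $F$ is non-decreasing on $[0,\infty)$ (because $f\ge0$ there), this does not increase $I$ and keeps the sequence admissible, so I may assume $u_n$ radially decreasing with $u_n\rightharpoonup u$ in $H^1$, $u_n\to u$ in $L^q(\R^2)$ for every $q>2$ (radial compactness) and a.e. Staying strictly below the Trudinger--Moser threshold gives a uniform $L^{1+\e}$ bound on $|u_n|^q(e^{\alpha u_n^2}-1)$, whence $\int F(u_n)\to\int F(u)$ by a Vitali-type argument; negativity $m(c)<0$ then rules out $u\equiv0$. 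To recover the full mass I would prove the strict monotonicity $m(c_2)<m(c_1)$ for $c_1<c_2$: using the two-dimensional scale invariance of the Dirichlet energy, $w_s(x)=u(x/s)$ with $s>1$ satisfies $\|\nabla w_s\|_2=\|\nabla u\|_2$, $\|w_s\|_2=s\|u\|_2$ and $\int F(w_s)=s^2\int F(u)$, so $I(w_s)=I(u)-(s^2-1)\int F(u)<I(u)$, forcing $m$ strictly decreasing. A Brezis--Lieb splitting shows that any loss of $L^2$-mass would give $m(c)\ge m(\|u\|_2)$ with $\|u\|_2<c$, contradicting strict monotonicity; hence $\|u\|_2=c$, $u_n\to u$ strongly in $H^1$, and $u=:u_c$ attains $m(c)$. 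I expect this mass-recovery step, intertwined with keeping the exponential term below the critical Moser level, to be the main obstacle.

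Finally I would read off the stated properties. The minimizer is radially symmetric and non-increasing as an a.e. limit of such functions; the Lagrange multiplier theorem on $S_c$ furnishes $\lambda_c$ with $-\Delta u_c+\lambda_c u_c=f(u_c)$; elliptic regularity (using the Trudinger--Moser integrability of $f(u_c)$) together with the strong maximum principle give $u_c\in C^1$ and $u_c>0$. The sign of the multiplier follows from the two-dimensional Pohozaev identity $\lambda_c c^2=2\int_{\R^2}F(u_c)\,\ud x$, whose right-hand side is strictly positive since $u_c>0$ forces $F(u_c)>0$; therefore $\lambda_c>0$, completing the proof.
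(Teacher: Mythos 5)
Your proposal is correct, and its skeleton coincides with the paper's: the potential well inside $\{\|\nabla u\|_2^2<\rho^*\}$ with $\rho^*<4\pi/\alpha_0$ kept below the Moser threshold, the Gagliardo--Nirenberg/Cao lower bound giving positivity on the boundary of the well for small $c$, the mass-preserving dilation $t\,w(tx)$ with $p<4$ giving $m_c<0$, and Schwarz symmetrization plus radial compactness plus a Vitali-type argument for $\int_{\R^2}F(u_n)\,\ud x$ (this is Lemma \ref{20230301-lemma1} and the first part of Lemma \ref{lemma:20230426-l1}). Where you genuinely diverge is the mass-recovery step. You first prove strict monotonicity of $c\mapsto m(c)$ via the planar dilation $u(x/s)$, which fixes the Dirichlet energy while scaling $\|u\|_2^2$ and $\int F(u)$ by $s^2$, and then exclude mass loss by a Brezis--Lieb splitting; this classical subadditivity route is sound, the strictness coming from $\int F(u_n)\ge -I(u_n)\to|m(c_1)|>0$ along near-minimizers. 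The paper instead relaxes the constraint to the weakly closed set $\{0<\|u\|_2\le c,\ \|\nabla u\|_2^2<\rho^*\}$, so the weak limit is automatically a minimizer there, and rules out $\|u\|_2<c$ by observing that an interior minimizer would solve $-\Delta u=f(u)$ with zero multiplier, which is impossible since $f(u)\ge\varepsilon_0u^{p-1}$ with $p>2$ admits no positive supersolution (the Liouville theorem quoted in Remark \ref{remark:20230425-br1}); strict monotonicity of $m_c$ then drops out as a corollary of the same nonexistence statement rather than being an ingredient. Your route is more elementary and self-contained (no Liouville theorem), but is tied to the two-dimensional scale invariance of $\|\nabla\cdot\|_2$; the paper's route avoids the splitting lemma at this stage and packages full-mass recovery, strict monotonicity, and the sign of the multiplier into a single nonexistence remark. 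Likewise you obtain $\lambda_c>0$ from the planar Pohozaev identity $\lambda_c c^2=2\int_{\R^2}F(u_c)\,\ud x$ while the paper again invokes Remark \ref{remark:20230425-br1}; both are valid, and the paper itself uses that identity later.
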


\noindent Since $u_c$ is a local minimizer, it is natural to wonder:
\begin{itemize}
\item[({\bf Q1})] Whether $u_c$ is a normalized ground state solution? \\
Here normalized ground state means that $I(u_c)$ possesses the least energy among all the normalized solutions, namely
$$I(u_c)=\min\{I(u): u\in S_c,~ I\big|'_{S_c}(u)=0\}.$$
\item[({\bf Q2})] Whether there exists a second higher energy solution. This question can be regarded, in dimension two, as the Soave's open problem \cite[Remark 1.1]{Soave2020a} stated in higher dimensions.
\item[({\bf Q3})] Whether a normalized ground state solution is unique. Furthermore, what about the asymptotic behaviour of the normalized ground state as $c\rightarrow 0^+$, the so called zero mass case?
\item[({\bf Q4})] The asymptotic behaviour of higher energy solutions as the mass vanishes.
\end{itemize}

\medskip

\noindent We can answer the question ({\bf Q1}) in the following
\bt\lab{th:20230425-t1}
Assume in addition to the assumptions of Theorem \ref{theorem 1.1} that also $\mathbf{(f_3)}$ holds and let $c_0$ be sufficiently small. Then the normalized solution $u_c$ given in Theorem \ref{theorem 1.1} is a normalized ground state solution.
\et
\noindent Next we give a positive answer to question ({\bf Q2}).

\begin{theorem}\label{theorem 1.2}
Assume that $\mathbf{(f_1)}-\mathbf{(f_5)}$ hold. Define $S_{r,c}:=S_c\cap H_r^{1}(\R^2)$. Let $c\in (0,c_0)$ and $u_c$ be the normalized ground state solution given by Theorem \ref{th:20230425-t1}. Then Eq.\eqref{20230228e1}-\eqref{20230228e2} has a couple of solution $(\bar{u}_c,\bar{\lambda}_c)\in H^1(\R^2)\times\R$ such that $\bar{\lambda}_c>0$ and $\bar{u}_c\in S_{r,c}$ is a normalized mountain pass type critical point of $\left.I\right|_{S_{r,c}}$, which satisfies
$$
I(u_c)<0<I(\bar{u}_c)<I(u_c)+\frac{2\pi}{\alpha_0}.
$$
\end{theorem}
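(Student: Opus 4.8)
The plan is to realize the second solution as a mountain pass critical point of $I$ restricted to the radial sphere $S_{r,c}$, using the ground state $u_c$ (which by Theorems~\ref{theorem 1.1} and \ref{th:20230425-t1} is a radial local minimizer with $I(u_c)<0$ for $c<c_0$ small) as the bottom of a potential well. First I would set up the geometry by means of the mass-preserving dilation $u_s(x):=s\,u(sx)$, for which $\|u_s\|_2=\|u\|_2$ and $\|\nabla u_s\|_2^2=s^2\|\nabla u\|_2^2$. Writing $I(u_s)=\tfrac{s^2}{2}\|\nabla u\|_2^2-s^{-2}\int_{\R^2}F\big(s\,u(y)\big)\,\ud y$ and invoking the exponential growth in $\mathbf{(f_2)}$--$\mathbf{(f_4)}$, one sees that $I((u_c)_s)\to-\infty$ as $s\to+\infty$; fix $s_0>1$ with $v_0(x):=s_0\,u_c(s_0x)$ lying outside the ball on which $u_c$ minimizes and satisfying $I(v_0)<I(u_c)$. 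With
$$\gamma_c:=\inf_{h\in\Gamma}\max_{t\in[0,1]}I(h(t)),\qquad \Gamma:=\big\{h\in C([0,1],S_{r,c}):h(0)=u_c,\ h(1)=v_0\big\},$$
the local-minimizer structure from Theorem~\ref{theorem 1.1} (strictly positive energy on the separating sphere, for $c<c_0$) yields a genuine barrier, so $\gamma_c>\max\{0,I(u_c)\}$.

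The heart of the proof, and the step I expect to be the main obstacle, is the strict upper bound
$$\gamma_c<I(u_c)+\frac{2\pi}{\alpha_0}.$$
I would prove it by constructing an explicit competitor path built from concentrating Moser functions: truncating and rescaling the standard Moser sequence supported near the origin, I form a family that is glued to $u_c$ and then dragged to $v_0$ through the dilation above, and I estimate the maximal energy along it. The lower exponential bound $\mathbf{(f_4)}$, $\liminf_{s\to+\infty}f(s)s\,e^{-\alpha_0 s^2}\ge\beta_0$, is exactly what guarantees that the nonlinear energy harvested by the concentrating part overcomes the logarithmically divergent gradient energy, pushing the maximum strictly below $I(u_c)+2\pi/\alpha_0$ for the concentration parameter large. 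The delicate point is the sharp calibration of the concentration scale against the Trudinger--Moser constant $4\pi$, so that the gain furnished by $\mathbf{(f_4)}$ is not absorbed by the $O(1/\log n)$ error terms.

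Next I would extract a bounded Palais--Smale sequence and pass to the limit. Applying Jeanjean's monotonicity/scaling trick on $\R\times S_{r,c}$ to $\tilde I(s,u):=I(u_s)$, whose mountain pass level coincides with $\gamma_c$, produces $u_n\in S_{r,c}$ with $I(u_n)\to\gamma_c$, $\left.I\right|_{S_{r,c}}'(u_n)\to0$ and the additional Pohozaev information $\|\nabla u_n\|_2^2-\int_{\R^2}\big[f(u_n)u_n-2F(u_n)\big]\,\ud x\to0$. Combining this with the energy identity gives $\tfrac12\int_{\R^2}f(u_n)u_n\,\ud x-2\int_{\R^2}F(u_n)\,\ud x\to\gamma_c$; since $\mathbf{(f_3)}$ yields $\int_{\R^2}F(u_n)\,\ud x\le\epsilon\int_{\R^2}f(u_n)u_n\,\ud x+C_\epsilon$, taking $\epsilon<\tfrac14$ bounds $\int_{\R^2}f(u_n)u_n\,\ud x$, hence $\int_{\R^2}F(u_n)\,\ud x$ and $\|\nabla u_n\|_2^2$. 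Thus $u_n$ is bounded in $H^1_r(\R^2)$, the Lagrange multipliers $\lambda_n$ (read off from the equation tested against $u_n$) are bounded, and up to a subsequence $u_n\rightharpoonup\bar u_c$ in $H^1_r(\R^2)$, $\lambda_n\to\bar\lambda_c$, and $u_n\to\bar u_c$ in $L^q(\R^2)$ for every $q>2$ by the compact radial embedding. Here the strict bound of the previous step is decisive: it keeps the relevant kinetic energy below the Trudinger--Moser threshold $4\pi/\alpha_0$, so a concentration--compactness/Brezis--Lieb analysis of the exponential term rules out both vanishing and the escape of a bubble of energy $2\pi/\alpha_0$, forcing $\bar u_c\neq0$ and $u_n\to\bar u_c$ strongly in $H^1(\R^2)$; consequently $\bar u_c\in S_{r,c}$ and $I(\bar u_c)=\gamma_c$.

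Finally I would collect the qualitative conclusions. By the principle of symmetric criticality, $\bar u_c$ is a critical point of $\left.I\right|_{S_c}$, so $(\bar u_c,\bar\lambda_c)$ solves \eqref{20230228e1}; since $f(s)=0$ for $s\le0$, testing the equation with the negative part gives $\bar u_c\ge0$, and the strong maximum principle upgrades this to $\bar u_c>0$. The two-dimensional Pohozaev identity $\bar\lambda_c c^2=2\int_{\R^2}F(\bar u_c)\,\ud x$ together with $F>0$ on $(0,+\infty)$ forces $\bar\lambda_c>0$. Lastly, $I(\bar u_c)=\gamma_c>0>I(u_c)$ shows that $\bar u_c\neq u_c$ and yields the asserted energy bracket $I(u_c)<0<I(\bar u_c)<I(u_c)+\tfrac{2\pi}{\alpha_0}$, which completes the proof.
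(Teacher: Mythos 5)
Your overall strategy coincides with the paper's: mountain-pass geometry based at the local minimizer $u_c$, a Moser-type competitor glued to $u_c$ to push the level strictly below $I(u_c)+\frac{2\pi}{\alpha_0}$, Jeanjean's scaling trick to produce a bounded Palais--Smale--Pohozaev sequence, and radial compactness plus a Br\'ezis--Lieb analysis of the exponential term for strong convergence. The geometry, boundedness and compactness portions of your plan are sound and match Lemma \ref{20230307-lemma-3}, Lemma \ref{lemma 3.3} and Lemma \ref{20230307-lemma-2}.

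The genuine gap is in the step you yourself flag as the heart of the matter, namely the strict estimate $M_c<I(u_c)+\frac{2\pi}{\alpha_0}$. You correctly observe that the danger is that the gain from $\mathbf{(f_4)}$ be absorbed by error terms, but you offer no mechanism to prevent this, and the naive computation does not close. Along the glued path $\gamma_n(t)=u_c(\tau_n x)+t\widetilde m_n(\tau_n x)$ the cross terms $\int\nabla u_c\nabla\widetilde m_n\,\mathrm{d}x$ and $\int u_c\widetilde m_n\,\mathrm{d}x$ are of size $1/\sqrt{\log n}$ (not $1/\log n$ as you write), so a direct estimate of the maximizer $t_n$ only yields $t_n^2\ge \frac{4\pi}{\alpha_0}+O(1/\sqrt{\log n})$ with uncontrolled sign; an error of this order in $t_n^2$ produces an error of order $\sqrt{\log n}$ in the exponent $\alpha_0 t_n^2\frac{\log n}{2\pi}-2\log n$, which can be negative and of the same size as the favorable term $2\sqrt{\alpha_0}\,\|u_c\|_\infty\sqrt{2\log n}$, so the exponential need not blow up and no contradiction results. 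The paper's resolution (proof of Proposition \ref{20230315-lemma-2}) is to use the monotonicity assumption $\mathbf{(f_5)}$ to bound $\int[F(u_c+t_n\widetilde m_n)-F(u_c)]\,\mathrm{d}x$ from below by $\int f(u_c)\,t_n\widetilde m_n\,\mathrm{d}x$, rewrite the latter via the weak equation for $u_c$ as $t_n\int(\nabla u_c\nabla\widetilde m_n+\lambda_c u_c\widetilde m_n)\,\mathrm{d}x$, and then invoke the identity $2\int F(u_c)\,\mathrm{d}x=\lambda_c c^2$ (obtained by subtracting the Pohozaev identity from the Nehari identity for $u_c$) so that all $O(1/\sqrt{\log n})$ contributions cancel exactly, leaving $t_n^2\ge\frac{4\pi}{\alpha_0}+O(1/\log n)$, which is sharp enough to force the contradiction. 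Your proposal never uses $\mathbf{(f_5)}$ --- the only place in the theorem where that hypothesis enters --- and without this cancellation (or an equivalent one) the upper bound for the mountain-pass level, and hence the entire compactness argument downstream of it, does not go through.
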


\noindent Before answering question ({\bf Q3}), let us replace condition $\mathbf{(f_1)}$ by the following:
\begin{itemize}
\item[$\mathbf{(f'_1)}$] $f\in C^1([0,+\infty)), f(s)>0$ for $s>0$ and $\lim\limits_{s\rightarrow 0^+}\frac{f'(s)}{s^{p-2}}:=\mu(p-1)>0$.
\end{itemize}

\bt\lab{th:20230426-xt1}
Under the assumptions $\mathbf{(f'_1)}$, $\mathbf{(f_2)}$ and $\mathbf{(f_3)}$, let $c_0$ be sufficiently small, then the following hold:
\begin{itemize}
\item[(i)] for a.e. $c\in (0,c_0)$, the normalized solution $u_c$ given in Theorem \ref{theorem 1.1} is the unique normalized ground state solution;
\item[(ii)]\begin{align*}
    \lambda_{c}^{\frac{1}{2-p}}u_{c}\left(\frac{x}{\sqrt{\lambda_{c}}}\right)\rightarrow U ~\hbox{in}~H^1(\R^2) ~\hbox{as}~c\rightarrow 0^+,
    \end{align*}
    where $U$ is the unique positive radial solution to
    \beq\lab{eq:20230524-e1}
    -\Delta U+U=\mu U^{p-1}~\hbox{in}~\R^2, \quad U(x)\rightarrow 0~\hbox{as}~|x|\rightarrow +\infty.
    \eeq
\end{itemize}
\et

\noindent Finally, we investigate the asymptotic behaviour of the mountain pass solutions $\bar{u}_c$ as $c\to0^+$. For technical reasons, we can not obtain a satisfactory result as in Theorem \ref{th:20230426-xt1}. However, we can give a partial answer to question ({\bf Q4}) in the sense of the following

\begin{theorem}\label{theorem 1.3}
Assume $\mathbf{(f_1)}-\mathbf{(f_3)}$ and let $c\in(0,c_0)$, $(\bar{u}_c,\bar{\lambda}_c)$ be the mountain pass type critical point of $\left.I\right|_{S_{r,c}}$ in Theorem \ref{theorem 1.2}, and $M_c$ the corresponding mountain pass level. Then, as $c\to0^+$ we have the following:
\begin{equation*}
\|\nabla\bar{u}_c\|_{2}^{2}\to\frac{4\pi}{\alpha_0}\ ,\quad
\int_{\R^2}f(\bar{u}_c)\bar{u}_c\mathrm{d}x\to\frac{4\pi}{\alpha_0}\ ,\quad
M_c\to\frac{2\pi}{\alpha_0} \ .
\end{equation*}
\end{theorem}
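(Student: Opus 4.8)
The plan is to reduce all three limits to the single statement $M_c\to\frac{2\pi}{\alpha_0}$, and then to prove the latter by squeezing $M_c$ between an (easy) upper bound coming from Theorem \ref{theorem 1.2} and a (harder) lower bound coming from a uniform Trudinger--Moser energy barrier. First I would record the two scalar identities satisfied by $(\bar u_c,\bar\lambda_c)$. Testing \eqref{20230228e1} with $\bar u_c$ gives the Nehari relation $\|\nabla\bar u_c\|_2^2+\bar\lambda_c c^2=\int f(\bar u_c)\bar u_c$, while the two-dimensional Pohozaev identity for \eqref{20230228e1} reads $\bar\lambda_c c^2=2\int F(\bar u_c)$. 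Writing $t_c:=\|\nabla\bar u_c\|_2^2$, $A_c:=\int f(\bar u_c)\bar u_c$ and $B_c:=\int F(\bar u_c)$, these combine with $M_c=I(\bar u_c)=\tfrac12 t_c-B_c$ into the purely algebraic relations $t_c=2M_c+2B_c$ and $A_c=2M_c+4B_c$. Hence it suffices to prove $B_c\to0$ and $M_c\to\frac{2\pi}{\alpha_0}$, as then the three asserted limits follow at once. The vanishing of $B_c$ I would obtain from $\mathbf{(f_3)}$ together with Gagliardo--Nirenberg: given $\varepsilon>0$ choose $R$ with $F(s)\le\varepsilon f(s)s$ for $s\ge R$ and $F(s)\le C_\varepsilon s^2$ for $0<s\le R$, so that $B_c\le C_\varepsilon\|\bar u_c\|_2^2+\varepsilon A_c=C_\varepsilon c^2+\varepsilon(2M_c+4B_c)$; absorbing $B_c$ and using the uniform bound $0<M_c<\frac{2\pi}{\alpha_0}$ yields $\limsup_{c\to0}B_c\le\frac{2\varepsilon}{1-4\varepsilon}\cdot\frac{2\pi}{\alpha_0}$, whence $B_c\to0$ on letting $\varepsilon\to0$.

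The upper bound is immediate: Theorem \ref{theorem 1.2} gives $M_c=I(\bar u_c)<I(u_c)+\frac{2\pi}{\alpha_0}$, and since $I(u_c)<0$ we obtain $M_c<\frac{2\pi}{\alpha_0}$, so $\limsup_{c\to0}M_c\le\frac{2\pi}{\alpha_0}$.

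The heart of the proof is the matching lower bound $\liminf_{c\to0}M_c\ge\frac{2\pi}{\alpha_0}$, which I would establish through a uniform energy barrier placed at the Trudinger--Moser threshold. Fix $\eta>0$. For every $u\in S_{r,c}$ with $\|\nabla u\|_2^2\le\frac{4\pi}{\alpha_0}-\eta$ I would show that $\int F(u)=o_c(1)$ uniformly: splitting at the level $u=1$, the contribution of $\{u\le 1\}$ is $\le C\|u\|_p^p\le C\|\nabla u\|_2^{p-2}c^2\to0$ by Gagliardo--Nirenberg, while on $\{u>1\}$ one uses $\mathbf{(f_2)}$ to bound $F(u)\le Ce^{\alpha u^2}$ for some $\alpha>\alpha_0$ chosen so close to $\alpha_0$ that $\alpha(\frac{4\pi}{\alpha_0}-\eta)<4\pi$, and then applies H\"older together with the Trudinger--Moser inequality (legitimate since $\alpha r\|\nabla u\|_2^2<4\pi$ for $r>1$ near $1$), gaining the small factor $|\{u>1\}|^{1/r'}\le(c^2)^{1/r'}\to0$. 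Consequently $I(u)\ge\tfrac12\|\nabla u\|_2^2-o_c(1)$ on this slice, and in particular $I(u)<0$ forces $\|\nabla u\|_2^2>\frac{4\pi}{\alpha_0}-\eta$. Since the starting point $u_c$ satisfies $\|\nabla u_c\|_2^2<\frac{4\pi}{\alpha_0}-\eta$ for all small $c$ (being the local minimizer of Theorem \ref{theorem 1.1}, whose gradient norm stays small), while the far endpoint of every admissible mountain--pass path constructed in Theorem \ref{theorem 1.2} has negative energy and hence large gradient, by continuity each such path meets the sphere $\{\|\nabla u\|_2^2=\frac{4\pi}{\alpha_0}-\eta\}$ at some $u^\ast$, where $I(u^\ast)\ge\frac{2\pi}{\alpha_0}-\frac\eta2-o_c(1)$. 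Taking the maximum along the path and the infimum over paths gives $M_c\ge\frac{2\pi}{\alpha_0}-\frac\eta2-o_c(1)$; letting first $c\to0$ and then $\eta\to0$ proves the claim.

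I expect the main obstacle to lie precisely in this last step: in making the Trudinger--Moser barrier estimate genuinely \emph{uniform} over the whole slice $\{\|\nabla u\|_2^2\le\frac{4\pi}{\alpha_0}-\eta\}\cap S_{r,c}$, so that the exponents $\alpha,r$ and all constants are chosen independently of $u$, and in checking against the construction of Theorem \ref{theorem 1.2} that every competitor path indeed begins below and terminates above the threshold sphere, guaranteeing the crossing point $u^\ast$. Once the two bounds are combined one gets $M_c\to\frac{2\pi}{\alpha_0}$, and then the relations $t_c=2M_c+2B_c$ and $A_c=2M_c+4B_c$ together with $B_c\to0$ deliver $\|\nabla\bar u_c\|_2^2\to\frac{4\pi}{\alpha_0}$ and $\int_{\R^2}f(\bar u_c)\bar u_c\,\mathrm{d}x\to\frac{4\pi}{\alpha_0}$, completing the proof.
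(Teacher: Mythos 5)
Your reduction of the three limits to the pair of statements $B_c:=\int_{\R^2}F(\bar u_c)\,\mathrm{d}x\to0$ and $M_c\to\frac{2\pi}{\alpha_0}$, via the Nehari and Pohozaev identities, is correct, and your arguments for $B_c\to0$ and for the upper bound $M_c<\frac{2\pi}{\alpha_0}$ agree in substance with the paper's. For the crucial lower bound, however, you take a genuinely different route. The paper never estimates the min--max level directly: it works on the critical points themselves and proves $\liminf_{c\to0}\|\nabla\bar u_c\|_2^2\ge\frac{4\pi}{\alpha_0}$ by contradiction --- if a subsequence stayed strictly below the Trudinger--Moser threshold, then since $\bar u_{c_n}\rightharpoonup0$ the Br\'ezis--Lieb type Corollary \ref{corollary 3.2} would force $\int_{\R^2}f(\bar u_{c_n})\bar u_{c_n}\,\mathrm{d}x\to0$, hence (via $\mathcal P(\bar u_{c_n})=0$ and $\int_{\R^2}F(\bar u_{c_n})\,\mathrm{d}x\to0$) $\|\nabla\bar u_{c_n}\|_2^2\to0$, contradicting $M_c\ge\tau_0$. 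Your barrier argument instead works purely with the definition \eqref{20230307e2} of $M_c$; this is a legitimate alternative, and the uniformity you worry about is exactly what estimate \eqref{20230302e1} of Lemma \ref{20230301-lemma1} already provides once $\rho^*$ is pushed up to $\frac{4\pi}{\alpha_0}-\eta$ (admissible since $t\alpha\bigl(\frac{4\pi}{\alpha_0}-\eta\bigr)<4\pi$ for suitable $t>1$, $\alpha>\alpha_0$). Your route has the mild advantage of bounding the level $M_c$ itself without invoking the compactness machinery of Corollary \ref{corollary 3.2}; the paper's is shorter because that corollary is already available.

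There is one incorrect inference in your crossing argument. From $I(u)\ge\frac12\|\nabla u\|_2^2-o_c(1)$ on the slice $\{\|\nabla u\|_2^2\le\frac{4\pi}{\alpha_0}-\eta\}$ you conclude that ``$I(u)<0$ forces $\|\nabla u\|_2^2>\frac{4\pi}{\alpha_0}-\eta$''. This is false: inside the slice, $I(u)<0$ only forces $\|\nabla u\|_2^2=o_c(1)$, and indeed $u_c$ itself lies in the slice with $I(u_c)=m_c<0$. What you actually need, and what is true, is that the \emph{endpoint} condition $I(\gamma(t))<2m_c$ excludes the slice: if $\|\nabla u\|_2^2<\rho^*$ then $u\in V_c$ and $I(u)\ge m_c>2m_c$ (since $m_c<0$), while if $\rho^*\le\|\nabla u\|_2^2\le\frac{4\pi}{\alpha_0}-\eta$ the barrier gives $I(u)\ge\rho^*\bigl(\frac12-o_c(1)\bigr)>0>2m_c$ for $c$ small. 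With this repair every $\gamma\in\Gamma_c$ does cross the sphere $\{\|\nabla u\|_2^2=\frac{4\pi}{\alpha_0}-\eta\}$, and the rest of your argument goes through.
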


\noindent Let us contextualize our results within the literature, we begin with recalling the setting in the higher dimensional case. In presence of $L^2$-subcritical growth, $I$ turns out to be bounded from below on $S_c$ for all $c>0$, thus it is possible to find a global minimizer via minimization arguments, see \cite{Stuart1989,Stuart1982,Lions1984,Lions1984a,Shibata2014} and reference therein. For the $L^2$-supercritical case, $I$ is unbounded from below on $S_c$ for any $c>0$. In particular, it is worth to point out that, differently from the mass sub-critical case, the Gagligardo-Nirenberg inequality is not sufficient to guarantee the boundedness of Palais-Smale ($(PS)$ for short) sequences. Here, a major issue is to find a special bounded $(PS)$ sequence. This is done in \cite{Jeanjean1997} where a so-called Palais-Smale-Pohozaev ($(PSP)$ for short) sequence is constructed, which turns out to be bounded thanks to the property $\mathcal{P}(u_n)\to 0$, where
$$\mathcal{P}(u):=\|\nabla u\|_2^2+N\int_{\R^N}F(u)\mathrm{d}x -\frac{N}{2}\int_{\R^N}f(u)u\mathrm{d}x\ .$$
Under general assumptions, the functional recovers coercivity property once constrained to the Pohozaev manifold, see for instance \cite[Lemma 2.2]{Radulescu2024}. There are plenty of  contributions in the $L^2$-supercritical, let us just mention here \cite{Bartsch2013,Bartsch2017,Bieganowski2021,Ikoma2019,Jeanjean2020}. One also refers to the $L^2$-critical case when the boundedness from below does depend on the value $c>0$. We refer to \cite{Guo2014,Cheng2016} and reference therein.

\noindent More recently, a new approach to study normalized solution problem, which is called the global branch approach, has been developed by Bartsch-Zhong-Zou \cite{Bartsch2021} and Jeanjean-Zhang-Zhong \cite{Jeanjean2024}. Compared with the constrained variational method, this method does not depend on the geometry of the energy functional, and thus it enables one to handle in a unified way nonlinearities which are either mass sub-critical, mass critical or mass super-critical and can be also used to obtain existence of solution in non-variational problems \cite{Liu2024,Zeng2023}. This approach relies on the complete continuity of the corresponding operator, in order to apply the topological degree theory and related tools.

\noindent For the mass mixed case, the geometric structure of the energy functional becomes more complicated. In particular, in presence of Sobolev critical growth, the operator $T_\lambda:=(-\Delta+\lambda)^{-1}f$ is not completely continuous in $H_{rad}^{1}(\R^N)$, and thus the Leray-Shauder degree is not well-defined. So the global branch approach developed in \cite{Bartsch2021,Jeanjean2024} can not be directly applied. The investigation of normalized solutions to the Schr\"{o}dinger equations involving Sobolev critical exponents can be viewed as the counterpart of the Br\'ezis-Nirenberg problem within the framework of normalized solutions.

\noindent The authors in \cite{Soave2020a, Jeanjean2022,Wei2022,Chen2024,Radulescu2024}consider the case of $f(s)=\mu |s|^{q-2}s+|s|^{2^*-2}s$ with $\mu>0$ and obtain existence and asymptotic behaviour of solutions with respect to $\mu$.

\medskip

\noindent The aforementioned literature concerns the higher dimensional case $N\geq 3$. For the case $N=2$, where the Sobolev critical embedding is replaced by the Trudinger-Moser inequality, the plot thickens and some new extra difficulties enter the picture, see \cite{Cao1992,BezerradoO1997,Adachi2000,Ruf2005,Li2008,Adimurthi2007,Deng2021,Giacomoni2016,Lu2015,Araujo2022, Cassani2014, Cassani2021} and references therein.

\noindent Recently, normalized solutions in $\R^2$ with nonlinearities involving exponential critical growth have been considered by Alves, Ji and Miyagaki \cite{Alves2022} and Chang, Liu and Yan \cite{Chang2023}. Both \cite{Alves2022, Chang2023} assume the following condition:
\begin{itemize}
\item[$(f)$] There exist constants $p>4$ and $\mu>0$ such that
$$
sgn(t)f(t)\geq\mu|t|^{p-1},~~\forall~t\neq0\ .
$$
\end{itemize}
Condition $(f)$ where $\mu>0$ has to be sufficiently large is essential in \cite{Alves2022, Chang2023}, as it enables the authors to pull down the mountain pass level to recover the compactness. It is natural to ask whether this kind of restriction can be removed. Indeed, here by hypothesis $\mathbf{\mathbf{(f_4)}}$ which just demands for information of $f(s)$ when $s>0$ is large enough, will be our main ingredient to estimate the mountain pass level, by exploiting the Moser sequence, after a conformal change of variables. Assumption $\mathbf{\mathbf{(f_4)}}$ is the so-called \textit{FMR condition} from \cite{Figueiredo1995} and it plays a fundamental role to recover compactness as for the condition $I(u)<S^{N/2}/N$ in the higher dimensional Sobolev context.

\noindent All the above papers deal with the pure mass super-critical case and at the best of our knowledge the present paper is the first step in considering the mass-mixed case for general nonlinearities. When our article was almost finalised, we have been informed of the recent paper \cite{Chen2023} in which the special nonlinearity $f(u)=\mu |u|^{p-2}u+(e^{u^2}-1-u^2)u$ is considered. When $2<p<4$, this is also a mass mixed case. However, for this particular nonlinearity one may exploit, as in the higher dimensional double power case, the Pohozaev manifold $\mathcal{P}$ as a natural constraint. Indeed, let $T:H^{1}(\R^2)\times\R^+\to H^{1}(\R^2)$ with $T(u,s)=su(sx)$, which preserves the $L^2$-norm. Then $\Psi_{u}(s):=I(T(u,s))$ has exactly two critical points $0<s_1<s_2<\infty$. In particular, one has
    $$\frac{\ud}{\ud s}\Psi_{u}(s)=0\Rightarrow \frac{\ud^2}{\ud s^2}\Psi_{u}(s)\neq 0\ .$$
Thus, similar to the double power case, the corresponding Pohozaev manifold $\mathcal{P}$ is a natural constraint due to
    $$\mathcal{P}_0:=\left\{u\in S_c:\frac{\ud}{\ud s}\Psi_{u}(s)\Big|_{s=1}=0, \frac{\ud^2}{\ud s^2}\Psi_{u}(s)\Big|_{s=1}=0\right\}=\emptyset.$$
   In the present paper, the nonlinearities considered are more general and in particular we can not benefit of the Pohozaev manifold constraint, as it turns out in our case that $\mathcal{P}_0\neq\emptyset$, see Remark \ref{remark:20240705-r1}.

\subsection*{Overview} We first give a brief review on the Trudinger-Moser inequality in the plane together with some Br\'{e}zis-Lieb type results in the case of exponential critical growth which will be useful to derive compactness properties.
Since our nonlinearity $f$ has a mass subcritical behaviour at $0$, one expects to find a local minimizer for some suitable prescribed mass. We prove the local minima geometric structure in  Lemma \ref{20230301-lemma1}. In Section \ref{sec:20230802-4}, we prove that the normalized solution $u_c$ obtained in Theorem \ref{theorem 1.1} is indeed a normalized ground state solution,  we argue by contradiction arguments. In Section \ref{sec:20230802-5}, we prove that the normalized ground state solution $u_c$ converges to 0 in $H^1(\R^2)$, as the corresponding Lagrange multiplier $\lambda_c\to 0$. Then, the asymptotic behaviour of the normalized ground state solution is established by means of a blow up analysis. Finally, we prove that the normalized ground state solution is unique for almost every  prescribed small mass. In Section \ref{sec:20230307-1} and Section \ref{subsec:20230331}, we show that the functional  $I\big|_{S_c}$ (similarly for $I\big|_{S_{r,c}}$) possesses a mountain pass geometric structure at $u_c$ and we obtain an upper bound for the mountain pass level. Section \ref{subsec:20230307-1} is devoted to prove the $(PSP)_{M_c}$-condition for $I\big|_{S_{r,c}}$.Asymptotic analysis for the normalized mountain pass type solution as $c\rightarrow 0^+$, is carried out in Section \ref{sec:proofs-3}.

\subsection*{Notation}
\begin{itemize}
\item $L^s(\R^2)(1\leq s<\infty)$ denotes the Lebesgue space with the norm $\|u\|_s:=(\int_{\R^2}|u|^s\mathrm{d}x)^{\frac{1}{s}}$.
\item $H_{r}^{1}(\R^2)$ denotes the subspace of functions in $H^1(\R^2)$ which are radially symmetric with respect to 0, and $S_{r,c}:=S_c\cap H_r^{1}(\R^2)$.
\item The symbol $\|\cdot\|$ is used only for the norm in $H^1(\R^2)$.
\item Denoting by ${}^\ast$ the symmetric decreasing rearrangement of a $H^1(\R^2)$ function, we recall that, if $u\in H^1(\R^2)$, then $|u|\in H^1(\R^2),~|u|^*\in H_{r}^{1}(\R^2)$, with
$$
\|\nabla|u|^*\|_2\leq\|\nabla|u|\|_2\leq\|\nabla u\|_2.
$$
\item For any $x\in\R^2$ and $r>0,~B_r(x):=\{y\in\R^2:|y-x|<r\}$.
\item $\R^+:=(0,+\infty)$.
\item $C_i$ denote positive constants which may change from line to line.
\end{itemize}

\medskip
\section{Preliminaries}\lab{sec:20230331-2}
\subsection{A few remarks} Let us first make some comments about our assumptions on the nonlinearity $f$.
\br\lab{remark:20230425-br2}
Under the assumptions
$\mathbf{(f_1)}$-$\mathbf{(f_2)}$, let $r\in (2,p)$ and $q>4$ be fixed. Then one has the following: \begin{itemize}
\item[(i)] For any $\varepsilon>0$ and $\alpha>\alpha_0$, there exists $C_\varepsilon>0$ such that
\begin{align}\lab{eq:20230828-e1}
|f(s)|\leq\varepsilon|s|^{r-1}+C_\varepsilon|s|^{q-1}(e^{\alpha s^2}-1),~~\forall~s\in\R;
\end{align}
\item[(ii)]For any $\varepsilon>0$ and $\alpha>\alpha_0$, there exists $C_\varepsilon>0$ such that
\begin{align}\label{2.2}
|F(s)|\leq\varepsilon|s|^{r}+C_\varepsilon|s|^{q}(e^{\alpha s^2}-1),~~\forall~s\in\R;
\end{align}
\item[(iii)] It holds
\beq\lab{eq:20230425-be1}
\inf_{s>0}\frac{f(s)}{s^{p-1}}:=\varepsilon_0>0, \quad \inf_{s>0}\frac{F(s)}{s^{p}}:=\varepsilon_1>0;
\eeq
\item[(iv)] Suppose in addition $\mathbf{(f_3)}$ holds. Then, for any $\varepsilon>0$, there exists some $C_\varepsilon>0$ such that
\beq\lab{eq:20230425-we1}
F(s)\leq C_\varepsilon s^{p}+\varepsilon f(s)s,\quad \forall~ s\in \R.
\eeq
\end{itemize}
\er

\br\lab{remark:20230425-br1}
Under the assumptions $\mathbf{(f_1)}$-$\mathbf{(f_2)}$,
if $\lambda_c\in \R$, $u_c\in H^1(\R^2)$ is a nonnegative nontrivial solution to
\begin{align}\label{20230404e1}
-\Delta u+\lambda u=f(u)\quad\text{in}\quad\R^2.
\end{align}
Then $\lambda_c>0$ and $u_c$ is a positive radially decreasing function. Indeed, first by standard elliptic regularity, $u_c$ is a classical solution to \eqref{20230404e1}.
So if $\lambda_c\leq 0$, by \eqref{eq:20230425-be1}, if follows that
\begin{align*}
-\Delta u_c\geq \varepsilon_0 u_{c}^{p-1},~ u_c\geq 0\quad\text{in}\quad\R^2.
\end{align*}
Now, under the assumption $p>2$, we conclude that $u_c\equiv 0$ from \cite[Theorem 8.4]{Quittner2007}, and then a contradiction. Hence, $\lambda_c>0$. On the other hand, by the standard moving plain argument, one can see that $u_c$ is a positive radially decreasing function, see \cite[Theorem 2]{Gidas1981}.
\er

\noindent In view of Remark \ref{remark:20230425-br1}, we only need to work in the radial subspace
$$
H_r^{1}(\R^2):=\{u\in H^1(\R^2):u(x)=u(|x|)\}
$$
for which the embedding $H_{r}^{1}(\R^2)\hookrightarrow L^{t}(\R^2)$ is compact for all $t\in(2,\infty)$.

\br\lab{remark:20240705-r1}
Here we show that for the general mass mixed case we are considering, one has that the  corresponding Pohozaev manifold may degenerate at some point. We do this by constructing a counter example such that $\mathcal{P}_0\neq \emptyset$. Indeed, let
$\displaystyle f(s)=|s|s+|s|^{\frac{3}{2}}s+[2se^{s^2}-2s-2|s|^2 s]$.
A direct computation shows that
\begin{align*}
\frac{\mathrm{d}}{\mathrm{d}t}I(tu(tx))=&\|\nabla u\|_2^2 t-\frac{1}{3}\|u\|_3^3 -\frac{3}{7}\|u\|_{\frac{7}{2}}^{\frac{7}{2}} t^{\frac{1}{2}}
-\sum_{k=3}^{\infty}\frac{(2k-2)}{k!}\|u\|_{2k}^{2k}t^{2k-3}
\end{align*}
and
\begin{align*}
\frac{\mathrm{d}^2}{\mathrm{d}t^2}I(tu(tx))=&\|\nabla u\|_2^2  -\frac{3}{14}\|u\|_{\frac{7}{2}}^{\frac{7}{2}} t^{-\frac{1}{2}}
-\sum_{k=3}^{\infty}\frac{(2k-2)(2k-3)}{k!}\|u\|_{2k}^{2k}t^{2k-4}.
\end{align*}
We aim at finding some $u\in H^1(\R^2)$ such that
\begin{equation}\label{eq:20240525-e1}
\|\nabla u\|_2^2 -\frac{1}{3}\|u\|_3^3 -\frac{3}{7}\|u\|_{\frac{7}{2}}^{\frac{7}{2}}
-\sum_{k=3}^{\infty}\frac{(2k-2)}{k!}\|u\|_{2k}^{2k}=0
\end{equation}
and
\begin{equation}\label{eq:20240525-e2}
\|\nabla u\|_2^2  -\frac{3}{14}\|u\|_{\frac{7}{2}}^{\frac{7}{2}}
-\sum_{k=3}^{\infty}\frac{(2k-2)(2k-3)}{k!}\|u\|_{2k}^{2k}=0,
\end{equation}
which are equivalent to
\begin{equation}\label{eq:20240525-e3}
\begin{cases}
&\frac{1}{3}\|u\|_3^3 +\frac{3}{14}\|u\|_{\frac{7}{2}}^{\frac{7}{2}}-\sum_{k=3}^{\infty}\frac{(2k-2)^2}{k!}\|u\|_{2k}^{2k}=0,\\
&\|\nabla u\|_2^2 +\frac{1}{3}\|u\|_3^3
-\sum_{k=3}^{\infty}\frac{(2k-2)(4k-5)}{k!}\|u\|_{2k}^{2k}=0.
\end{cases}
\end{equation}
By $u=s(tv(tx)), v\in H^1(\R^2)\backslash\{0\}, t>0, s>0$, \eqref{eq:20240525-e3} yields
\begin{equation}\label{eq:20240525-e4}
\begin{cases}
\frac{1}{3}\|v\|_3^3 s^3t +\frac{3}{14}\|v\|_{\frac{7}{2}}^{\frac{7}{2}}s^{\frac{7}{2}}t^{\frac{3}{2}}
-\sum_{k=3}^{\infty}\frac{(2k-2)^2}{k!}\|v\|_{2k}^{2k}s^{2k}t^{2k-2}=0,\\
\|\nabla v\|_2^2s^2t^2  +\frac{1}{3}\|v\|_3^3 s^3t
-\sum_{k=3}^{\infty}\frac{(2k-2)(4k-5)}{k!}\|v\|_{2k}^{2k}s^{2k}t^{2k-2}=0,
\end{cases}
\end{equation}
that is
\begin{equation}\label{eq:20240525-e5}
\begin{cases}
\frac{1}{3}\|v\|_3^3  +\frac{3}{14}\|v\|_{\frac{7}{2}}^{\frac{7}{2}}s^{\frac{1}{2}}t^{\frac{1}{2}}
-\sum_{k=3}^{\infty}\frac{(2k-2)^2}{k!}\|v\|_{2k}^{2k}s^{2k-3}t^{2k-3}=0,\\
\|\nabla v\|_2^2\frac{t}{s}  +\frac{1}{3}\|v\|_3^3
-\sum_{k=3}^{\infty}\frac{(2k-2)(4k-5)}{k!}\|v\|_{2k}^{2k}s^{2k-3}t^{2k-3}=0.
\end{cases}
\end{equation}
Set $m=\frac{t}{s}, \ell=s^{\frac{1}{2}}t^{\frac{1}{2}}$, then the above equations become
\begin{equation}\label{eq:20240525-e6}
\begin{cases}
\frac{1}{3}\|v\|_3^3  +\frac{3}{14}\|v\|_{\frac{7}{2}}^{\frac{7}{2}}\ell
-\sum_{k=3}^{\infty}\frac{(2k-2)^2}{k!}\|v\|_{2k}^{2k}\ell^{4k-6}=0,\\
\|\nabla v\|_2^2m  +\frac{1}{3}\|v\|_3^3
-\sum_{k=3}^{\infty}\frac{(2k-2)(4k-5)}{k!}\|v\|_{2k}^{2k}\ell^{4k-6}=0.
\end{cases}
\end{equation}
Thus, $\ell>0$ solves
$$g_v(\ell):=\frac{1}{3}\|v\|_3^3  +\frac{3}{14}\|v\|_{\frac{7}{2}}^{\frac{7}{2}}\ell
-\sum_{k=3}^{\infty}\frac{(2k-2)^2}{k!}\|v\|_{2k}^{2k}\ell^{4k-6}=0.$$
By $\displaystyle \lim_{\ell\rightarrow 0^+}g_v(\ell)=\frac{1}{3}\|v\|_3^3$ and $\displaystyle \lim_{\ell\rightarrow +\infty}g_v(\ell)=-\infty<0$, we conclude that there exists some $\ell=\ell(v)>0$ such that $g_v(\ell)=0$.
Then we obtain
$$m=m(v):=\frac{\sum_{k=3}^{\infty}\frac{(2k-2)(4k-5)}{k!}\|v\|_{2k}^{2k}\ell^{4k-6}-\frac{1}{3}\|v\|_3^3}{\|\nabla v\|_2^2}.$$
We \textit{claim} that $m>0$.
 In fact, if not, by $m\leq 0$, we have
\begin{align*}
&\sum_{k=3}^{\infty}\frac{(2k-2)(4k-5)}{k!}\|v\|_{2k}^{2k}\ell^{4k-6}\leq \frac{1}{3}\|v\|_3^3\\
=&\sum_{k=3}^{\infty}\frac{(2k-2)^2}{k!}\|v\|_{2k}^{2k}\ell^{4k-6}
-\frac{3}{14}\|v\|_{\frac{7}{2}}^{\frac{7}{2}}\ell\\
<&\sum_{k=3}^{\infty}\frac{(2k-2)^2}{k!}\|v\|_{2k}^{2k}\ell^{4k-6},
\end{align*}
thus
$\sum_{k=3}^{\infty}\frac{(2k-2)(2k-3)}{k!}\|v\|_{2k}^{2k}\ell^{4k-6}< 0$, which is a contradiction since $\frac{(2k-2)(2k-3)}{k!}>0$ for any $k\geq 3$.

\noindent Finally, for any $v\in H^1(\R^2)\backslash\{0\}$, let $\ell(v)$ and $m(v)$ be as above, and define
$$t=t(v):=\sqrt{m(v)}\ell(v), s=s(v):=\frac{1}{\sqrt{m(v)}}\ell(v)\ .$$
Then, $u:=s(tv(tx))$ solves \eqref{eq:20240525-e3}, which implies that $u\in \mathcal{P}_0$.
\er

\subsection{Functional setting}
\noindent Let  $H^1_0(\Omega)$ be the classical Sobolev space, completion of smooth compactly supported functions with respect to the Dirichlet norm $\|\nabla \cdot\|_2$, when $\Omega$ is a bounded subset of $\R^N$, and with respect to the complete Sobolev  norm $(\|\nabla \cdot\|^2_2+\|\cdot \|_2^2)^{1/2}$, when the domain is unbounded and in particular for $\Omega =\R^N$.

\noindent If $N\geq 3$,  the  classical Sobolev embedding theorem reads as follows
\begin{equation}\label{Si}
H^1_0(\Omega)\hookrightarrow L^{2^*}(\Omega) \ , \ \text{ namely }\ \|u\|_{2^*}\leq \frac{1}{S}\, \|\nabla u\|_2\ ,
\end{equation}
where $2^*:=\frac{2N}{N-2}$ is the critical Sobolev exponent and the constant $S$ in \eqref{Si} is sharp \cite{Talenti1976}.

\noindent When $N=2$ is the so-called Sobolev limiting case. One has the embedding $H^1_0(\Omega)\hookrightarrow L^p(\Omega)$ for all $1\leq p < \infty$, though $H^1_0(\Omega) \not \subset L^{\infty}(\Omega)$.
The maximal degree of summability for functions in $H^1_0(\Omega)$ was established independently by Poho\v{z}aev \cite{Pohozaev1965} and Trudinger \cite{Trudinger1967} and is of exponential type, in a suitable Orlic\v{z} class of functions, namely
\begin{equation}\label{orlicz}
u \in H^1_0(\Omega) \ \Longrightarrow \ \displaystyle\int_\Omega (e^{\alpha|u|^2}-1)\mathrm{~d} x<\infty
, \quad \forall \alpha>0\ .
\end{equation}
Starting from the seminal work of Moser \cite{Moser1970} in which a sharp version of \eqref{orlicz} is established, the Pohozaev-Trudinger embedding has been further developed during the last fifty years, in particular the first extension of \eqref{orlicz} to unbounded domains is due to Cao \cite{Cao1992} for functions with bounded Sobolev's norm in the following form
\begin{equation}\label{caoineq}
\sup_{\|\nabla u\|_2\leq 1,\, \|u\|_2\leq M}\int_{\R^2}\left(e^{\alpha u^2}-1\right)\mathrm{~d} x\leq C(\alpha)\|u\|_2<\infty\,\text{ if } \,\alpha<4\pi.
\end{equation}
Thereafter, several sharp versions have been proved and extensions in many directions. In particular, the borderline case in which $\alpha=4\pi$ remained uncovered until Ruf in \cite{Ruf2005} established the following inequality which is sharp in the sense of Moser \cite{Moser1970} (so-called Trudinger-Moser type inequalities):
\begin{equation}\label{Ri}
\sup_{\|\nabla u\|_2^2+\:\|u\|_2^2\leq 1}\int_{\mathbb R^2}\left(e^{\alpha
	u^2}-1\right)\mathrm{~d} x\leq \widetilde{C}(\alpha)<\infty  \iff  \alpha\leq 4\pi\ .
\end{equation}
Generalizations to the limiting  case for the Sobolev embedding of $W^{1, N}(\mathbb{R}^N)$ have been established in \cite{Adachi2000, BezerradoO1997, Li2008} as follows:
\begin{align}\label{20240621e1}
\sup _{\|u\|_{W^{1, N}(\mathbb{R}^N)} \leq 1} \int_{\mathbb{R}^N} \phi_N(\alpha|u|^{\frac{N}{N-1}}) \mathrm{~d} x \begin{cases}\leq C(\alpha, N)^{\frac{N}{N-1}}, & \text { if } \alpha \leq \alpha_N, \\ =\infty, & \text { if } \alpha>\alpha_N,\end{cases}
\end{align}
where $\alpha_N=N\omega_{N-1}^{\frac{1}{N-1}}$,~$\omega_{N-1}$ is the measure of the unit sphere in $\R^N$ and
$$
\phi_N(t)=e^t-\sum_{j=0}^{N-2} \frac{t^j}{j !}=\sum_{j=N-1}^{\infty} \frac{t^j}{j !},~t \geq 0,
$$
see also \cite{Cassani2014}.

\noindent Let us next prove some useful results which will be helpful in the sequel.

\begin{lemma}\label{lemma 3.7}
Let $\bar{u}_n\rightharpoonup \bar{u}_c$ weakly in $H^{1}(\R^2)$ and $\limsup\limits_{n\to\infty}\|\nabla(\bar{u}_n-\bar{u}_c)\|_{2}^{2}<\frac{4\pi}{\alpha_0}$. Then for $\alpha>\alpha_0$ close to $\alpha_0$, we have that $\big\{e^{\alpha \bar{u}_n^2}-1\big\}$ is bounded in $L^t(\R^2)$ provided $t>1$ is close to 1.
\end{lemma}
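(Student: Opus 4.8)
The plan is to set $w_n := \bar u_n-\bar u_c$, so that $w_n\weakto 0$ in $H^1(\R^2)$ and, writing $\gamma:=\limsup_{n\to\infty}\|\nabla w_n\|_2^2$, the hypothesis becomes $\alpha_0\gamma<4\pi$. Since $\bar u_n$ converges weakly it is bounded in $H^1(\R^2)$, hence so is $w_n$; in particular $\sup_n\|w_n\|_2=:M<\infty$. The whole point is to decouple the possibly concentrating part $w_n$, which carries the subcritical information $\alpha_0\gamma<4\pi$, from the fixed profile $\bar u_c$, which is harmless because it is a single $H^1$ function.

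First I would record three elementary facts. For $t\ge 1$ and $s\ge 0$ one has $(e^s-1)^t\le e^{ts}-1$ (which follows from $(1+x)^t\ge 1+x^t$ for $x\ge0$). For any $\delta>0$, $\bar u_n^2=(w_n+\bar u_c)^2\le (1+\delta)w_n^2+(1+\tfrac1\delta)\bar u_c^2$. Finally, for conjugate exponents $q,q'$ and $a,b\ge0$, Young's inequality gives $e^{a+b}-1\le\tfrac1q(e^{qa}-1)+\tfrac1{q'}(e^{q'b}-1)$. Combining these yields the pointwise estimate
\begin{align*}
(e^{\alpha\bar u_n^2}-1)^t
&\le e^{t\alpha\bar u_n^2}-1
\le e^{\,t\alpha(1+\delta)w_n^2+t\alpha(1+\frac1\delta)\bar u_c^2}-1\\
&\le \tfrac1q\big(e^{\beta w_n^2}-1\big)+\tfrac1{q'}\big(e^{\beta'\bar u_c^2}-1\big),
\end{align*}
with $\beta:=qt\alpha(1+\delta)$ and $\beta':=q't\alpha(1+\frac1\delta)$. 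Integrating over $\R^2$, the second term is finite and independent of $n$ by the Pohozaev--Trudinger property \eqref{orlicz}, since $\bar u_c$ is a fixed function of $H^1(\R^2)$. For the first term I would invoke the subcritical Trudinger--Moser inequality \eqref{caoineq}: normalizing $\hat w_n:=w_n/\|\nabla w_n\|_2$ (the case $\nabla w_n\equiv0$ being trivial) gives $\|\nabla\hat w_n\|_2=1$ and $\beta w_n^2=\big(\beta\|\nabla w_n\|_2^2\big)\hat w_n^2$, so $\int_{\R^2}(e^{\beta w_n^2}-1)\,dx$ is controlled by $C\big(\beta\|\nabla w_n\|_2^2\big)$ together with $\|\hat w_n\|_2\le M/\|\nabla w_n\|_2$, provided the effective exponent $\beta\|\nabla w_n\|_2^2$ stays strictly below $4\pi$.

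The crux, and the only real obstacle, is to keep this effective exponent subcritical while absorbing the four inflation factors $q$, $t$, $\alpha/\alpha_0$ and $(1+\delta)$. Since $\alpha_0\gamma<4\pi$, I can fix $t>1$ and $q>1$ close to $1$, $\delta>0$ small, and take $\alpha>\alpha_0$ close to $\alpha_0$, so that $\beta=qt\alpha(1+\delta)$ is as close to $\alpha_0$ as desired; then for all large $n$, $\beta\|\nabla w_n\|_2^2\le\beta\gamma'<4\pi$ for some $\gamma<\gamma'<4\pi/\alpha_0$, which legitimizes \eqref{caoineq} with a constant uniform in $n$. The remaining subtlety is that $\|\nabla w_n\|_2$ might tend to $0$ along a subsequence, making $\|\hat w_n\|_2$ blow up; this is handled by the scale covariance of the subcritical estimate (equivalently the Adachi--Tanaka form $\int_{\R^2}(e^{\beta w_n^2}-1)\,dx\le C(\beta\|\nabla w_n\|_2^2)\,\|w_n\|_2^2/\|\nabla w_n\|_2^2$, whose prefactor stays bounded as $\|\nabla w_n\|_2\to0$ because $C(s)/s$ is bounded near $s=0$). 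In every case $\int_{\R^2}(e^{\alpha\bar u_n^2}-1)^t\,dx$ is then bounded uniformly in $n$, which is exactly the claim.
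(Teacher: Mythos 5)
Your proof is correct, and its overall architecture coincides with the paper's: write $\bar u_n=w_n+\bar u_c$ with $w_n:=\bar u_n-\bar u_c$, push all the inflation factors ($t$, $\alpha/\alpha_0$, and the splitting parameter) onto either the fixed profile $\bar u_c$ or onto $w_n$ with an effective exponent still strictly below $4\pi\cdot\|\nabla w_n\|_2^{-2}$, and then invoke the subcritical Cao/Adachi--Tanaka inequality \eqref{caoineq} for the $w_n$-part and the plain Pohozaev--Trudinger integrability \eqref{orlicz} for the $\bar u_c$-part. Where you genuinely diverge is in the mechanism of the splitting: the paper expands $e^{t\alpha|w_n+\bar u_c|^2}-1$ as a power series and applies, term by term, the elementary inequality $|a+b|^{2k}\le(1+\sigma)^k|b|^{2k}+C_\sigma^k|a|^{2k}$, whose whole point (and whose small optimization argument) is that $C_\sigma$ can be chosen independently of $k$ so that the series resums to $\int(e^{t\alpha(1+\sigma)w_n^2}-1)+\int(e^{C\bar u_c^2}-1)$. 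You instead use the quadratic splitting $(a+b)^2\le(1+\delta)a^2+(1+\delta^{-1})b^2$ directly in the exponent and then Young's inequality $e^{A}e^{B}\le\tfrac1q e^{qA}+\tfrac1{q'}e^{q'B}$ to separate the product of exponentials; this is the classical Lions-type device, it avoids the $k$-uniformity lemma entirely, and it costs only one extra factor $q>1$ close to $1$, which is harmless since the hypothesis $\limsup_n\|\nabla w_n\|_2^2<4\pi/\alpha_0$ is strict. A further point in your favour: you explicitly address the normalization needed to apply \eqref{caoineq} (the function must have gradient norm at most one, and the $L^2$-norm of the rescaled function must stay controlled even if $\|\nabla w_n\|_2\to0$), which the paper glosses over; your fix via the Adachi--Tanaka form with $C(s)/s$ bounded near $s=0$ works, though it is even simpler to rescale by a fixed $\sqrt{\gamma'}$ with $\gamma'\ge\sup_n\|\nabla w_n\|_2^2$ and $\beta\gamma'<4\pi$, which sidesteps the degeneration altogether.
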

\begin{proof}
Let us prove first that for any $\sigma>0$, there exists some $C=C_\sigma$, which only depends on $\sigma$, such that
\beq\lab{eq:20240714-e1}
|a+b|^{2k}\leq (1+\sigma)^k |b|^{2k}+C^k |a|^{2k}, \forall a,b\in \R, \forall k\geq 1.
\eeq
Indeed, \eqref{eq:20240714-e1} is equivalent to
\beq\lab{eq:20240714-e2}
1\leq (1+\sigma)^k (1-s)^{2k}+C^k s^{2k}=:h(s), \forall s\in [0,1].
\eeq 
By a direct computation, one can see that $h(s)$ attains its minimum at some $s_0\in (0,1)$, which is given by
\beq\lab{eq:20240714-e3}
(1+\sigma)^k (1-s_0)^{2k-1}=C^ks_{0}^{2k-1}.
\eeq
So, it has a minimum
\beq\lab{eq:20240714-e4}
h(s_0)=C^k s_{0}^{2k-1}.
\eeq
Hence, if we take $C\geq s_{0}^{-\frac{2k-1}{k}}$, \eqref{eq:20240714-e2} is true.
By inspection we have that 
$$\lim_{k\rightarrow +\infty}s_{0}^{-\frac{2k-1}{k}}=\frac{1+\sigma}{(\sqrt{1+\sigma}-1)^2},$$
so we can take $C=C_\sigma$ sufficiently large and independent of $k\geq 1$.

\noindent Now, for $\alpha>\alpha_0$ and $t>1$ close to 1, we may assume that
\beq\lab{eq:20240714-e5}
\limsup\limits_{n\to\infty}t\alpha\|\nabla(\bar{u}_n-\bar{u}_c)\|_{2}^{2}<4\pi.
\eeq
Set $\bar{v}_n:=\bar{u}_n-\bar{u}_c$. Let $\sigma>0$ be sufficiently small and let $C=C_\sigma$ be given by \eqref{eq:20240714-e1}.
Then,
\begin{align*}
&\int_{\R^2}|e^{\alpha \bar{u}_n^2}-1|^t \mathrm{d}x
\leq\int_{\R^2} (e^{t\alpha \bar{u}_n^2}-1) \mathrm{d}x
=\int_{\R^2} (e^{t\alpha |\bar{v}_n+\bar{u}_c|^2}-1) \mathrm{d}x\\
=&\int_{\R^2}\sum_{k=1}^{\infty}\frac{(t\alpha)^k}{k!} |\bar{v}_n+\bar{u}_c|^{2k} \mathrm{d}x\\
\leq&\int_{\R^2}\sum_{k=1}^{\infty}\frac{(t\alpha)^k}{k!}
\left[(1+\sigma)^k |\bar{v}_n|^{2k}+C^k |\bar{u}_c|^{2k}\right]\mathrm{d}x\\
=&\int_{\R^2}\sum_{k=1}^{\infty}\frac{(t\alpha)^k}{k!}(1+\sigma)^k |\bar{v}_n|^{2k}\mathrm{d}x
+\int_{\R^2}\sum_{k=1}^{\infty}\frac{(t\alpha)^k}{k!}C^k |\bar{u}_c|^{2k}\mathrm{d}x\\
=&\int_{\R^2}(e^{t\alpha(1+\sigma)\bar{v}_n^2}-1) \mathrm{d}x
+\int_{\R^2}(e^{C\bar{u}_c^2}-1) \mathrm{d}x.
\end{align*}
By taking $\sigma>0$ small enough such that 
\beq\lab{eq:20240714-e6}
\limsup\limits_{n\to\infty}t\alpha(1+\sigma)\|\nabla(\bar{u}_n-\bar{u}_c)\|_{2}^{2}<4\pi,
\eeq
\eqref{orlicz} and \eqref{caoineq} yield the boundedness of $\{\int_{\R^2}|e^{\alpha \bar{u}_n^2}-1|^t \mathrm{d}x\}$.
\end{proof}

\bl\label{corollary 3.1}
Let $\{\bar{u}_n\}\subset H^{1}(\R^2)$ be such that $\bar{u}_n\rightharpoonup\bar{u}_c$ weakly in $H^{1}(\R^2)$ and $\limsup\limits_{n\to\infty}\|\nabla(\bar{u}_n-\bar{u}_c)\|_{2}^{2}<\frac{4\pi}{\alpha_0}$. Under the assumptions $\mathbf{(f_1)}$ and $\mathbf{(f_2)}$, the following holds
\begin{align*}
\int_{\R^2}F(\bar{u}_n)\mathrm{d} x=\int_{\R^2}F(\bar{u}_c)\mathrm{d} x+\int_{\R^2}F(\bar{u}_n-\bar{u}_c)\mathrm{d}x+o_n(1).
\end{align*}
Suppose in addition that $f\in C^1(\R)$, and that for any $\varepsilon>0$, there exists $C_\varepsilon>0$ such that
\begin{align*}
|f^{\prime}(s)|\leq\varepsilon|s|^{\bar{p}-2}+C_\varepsilon|s|^{q-2}(e^{\alpha s^2}-1),\quad\forall~s\in\R,
\end{align*}
where $\alpha>\alpha_0$ and $q>4$. Then we have
\begin{align*}
\int_{\R^2}f(\bar{u}_n)\bar{u}_n\mathrm{d} x=\int_{\R^2}f(\bar{u}_c)\bar{u}_c\mathrm{d} x+\int_{\R^2}f(\bar{u}_n-\bar{u}_c)(\bar{u}_n-\bar{u}_c)\mathrm{d} x+o_n(1).
\end{align*}
\el
\begin{proof}
Extracting a subsequence if necessary, we may assume $\bar{u}_n\rightarrow \bar{u}_c$ a.e. in $\R^2$. 
Let $\theta_n(x)\in [0,1]$ be such that
\begin{align*}
\int_{\R^2}\left|F(\bar{u}_n)-F(\bar{u}_n-\bar{u}_c)\right|\mathrm{d} x=\int_{\R^2}|f(\bar{u}_n-(1-\theta_n)\bar{u}_c)\bar{u}_c|\mathrm{d} x.
\end{align*}
Recalling \eqref{eq:20230828-e1},
\begin{align*}
&|f(\bar{u}_n-(1-\theta_n)\bar{u}_c)\bar{u}_c|\\
\leq&\varepsilon|\bar{u}_n-(1-\theta_n)\bar{u}_c|^{r-1}|\bar{u}_c|+C_\varepsilon|\bar{u}_n-(1-\theta_n)\bar{u}_c|^{q-1}[e^{\alpha|\bar{u}_n-(1-\theta_n)\bar{u}_c|^{2}}-1]|\bar{u}_c|.
\end{align*}
By Lemma \ref{lemma 3.7}, $\{e^{\alpha \bar{u}_n^2}-1\}_{n=1}^{\infty}$ is bounded in $L^t(\R^2)$ provided $\alpha>\alpha_0$ is close to $\alpha_0$ and $t>1$ is close to one. Now, proceeding as in Lemma \ref{lemma 3.7}, one can also prove that  $\{e^{\alpha|\bar{u}_n-(1-\theta_n)\bar{u}_c|^2}-1\}_{n=1}^{\infty}$ is bounded in $L^{t_1}(\R^2)$ with $1<t_1<t.$

\noindent We \textit{claim} that $\{f(\bar{u}_n-(1-\theta_n)\bar{u}_c)\}$ stays bounded in $L^\tau(\R^2)$ for any $1<\tau<t_1.$ In order to prove this claim, we only need to show that $$\big\{|\bar{u}_n-(1-\theta_n)\bar{u}_c|^{q-1}(e^{\alpha|\bar{u}_n-(1-\theta_n)\bar{u}_c|^2}-1\big\}_{n=1}^{\infty}$$ stays bounded in $L^\tau(\R^2)$ for any $1<\tau<t_1.$ By the Sobolev embedding $H^1(\R^2)\hookrightarrow L^p(\R^2)$ for any $2\leq p<+\infty$ and the H\"{o}lder inequality, one has
\begin{align*}
&\int_{\R^2}|\bar{u}_n-(1-\theta_n)\bar{u}_c|^{(q-1)\tau}\big[e^{\alpha|\bar{u}_n-(1-\theta_n)\bar{u}_c|^2}-1\big]^\tau\mathrm{d}x\\
&\leq\big(\int_{\R^2}|\bar{u}_n-(1-\theta_n)\bar{u}_c|^{(q-1)\tau\frac{t_1}{t_1-\tau}}\mathrm{d} x\big)^{\frac{t_1-\tau}{t_1}}\\
&~~~~\big(\int_{\R^2}[e^{\alpha|\bar{u}_n-(1-\theta_n)\bar{u}_c|^2}-1]^{t_1}\mathrm{d} x\big)^{\frac{\tau}{t_1}}\\
&\leq C\|e^{\alpha|\bar{u}_n-(1-\theta_n)\bar{u}_c|^2}-1\|_{t_1}^{\tau}<\infty,
\end{align*}
and the claim is proved.

\noindent Noting that $\bar{u}_c\in L^{\tau^{\prime}}(\R^2)$ with $\tau^{\prime}=\frac{\tau}{\tau-1}>2,$ it follows that $\{f(\bar{u}_n-(1-\theta_n)\bar{u}_c)\bar{u}_c\}$ is bounded in $L^1(\R^2).$ Define $B_{R}^{c}:=\{x\in\R^2:|x|>R\}$, we have
\begin{align*}
\int_{B_{R}^{c}}|f(\bar{u}_n-(1-\theta_n)\bar{u}_c)\bar{u}_c|\mathrm{d}x\leq\|f(\bar{u}_n-(1-\theta_n)\bar{u}_c)\|_\tau\|\bar{u}_c\|_{L^{\tau^{\prime}}(B_{R}^{c})}\to0,
\end{align*}
uniformly in $n$, as $R\to+\infty.$ Furthermore, for any $\Lambda\subset\R^2,$ we also have
\begin{align*}
\int_{\Lambda}|f(\bar{u}_n-(1-\theta_n)\bar{u}_c)\bar{u}_c|\mathrm{d}x\leq\|f(\bar{u}_n-(1-\theta_n)\bar{u}_c)\|_{L^\tau(\Lambda)}\|\bar{u}_c\|_{L^{\tau^{\prime}}(\Lambda)}\to0,
\end{align*}
uniformly in $n$, as $\text{meas}(\Lambda)\to 0.$ So $\{F(\bar{u}_n)-F(\bar{u}_n-\bar{u}_c)\}$ possesses the uniform integrability condition, and then by Vitali's convergence theorem and the fact $F(0)=0,$ we obtain
\begin{align*}
\lim\limits_{n\to\infty}\int_{\R^2}[F(\bar{u}_n)-F(\bar{u}_n-\bar{u}_c)]\mathrm{d}x=\int_{\R^2}\lim\limits_{n\to\infty}[F(\bar{u}_n)-F(\bar{u}_n-\bar{u}_c)]\mathrm{d}x=\int_{\R^2}F(\bar{u}_c)\mathrm{d}x.
\end{align*}

\noindent Similarly, if $f\in C^1$ satisfies 
\begin{align*}
|f^{\prime}(s)|\leq\varepsilon|s|^{\bar{p}-2}+C_\varepsilon|s|^{q-2}(e^{\alpha s^2}-1),\quad\forall~s\in\R,
\end{align*}
with $\alpha>\alpha_0, q>4$, one can show also in this case that
\begin{align*}
\int_{\R^2}f(\bar{u}_n)\bar{u}_n\mathrm{d} x=\int_{\R^2}f(\bar{u}_c)\bar{u}_c\mathrm{d} x+\int_{\R^2}f(\bar{u}_n-\bar{u}_c)(\bar{u}_n-\bar{u}_c)\mathrm{d} x+o_n(1).
\end{align*}
\end{proof}

\noindent The previous Br\'{e}zis-Lieb type Lemma can be improved if the sequences are radially symmetric.
\begin{corollary}\label{corollary 3.2}
Let $\{\bar{u}_n\}\subset H_{r}^{1}(\R^2)$ be such that $\bar{u}_n\rightharpoonup \bar{u}_c$ weakly in $H_{r}^{1}(\R^2)$ and $\limsup\limits_{n\to\infty}\|\nabla(\bar{u}_n-\bar{u}_c)\|_{2}^{2}<\frac{4\pi}{\alpha_0}$. Under the assumptions $\mathbf{(f_1)}$ and $\mathbf{(f_2)}$, we have
\begin{align*}
\int_{\R^2}f(\bar{u}_n)\bar{u}_n\mathrm{d}x=\int_{\R^2}f(\bar{u}_c)\bar{u}_c\mathrm{d}x+o_n(1),
\end{align*}
and
\begin{align*}
\int_{\R^2}F(\bar{u}_n)\mathrm{d}x=\int_{\R^2}F(\bar{u}_c)\mathrm{d}x+o_{n}(1).
\end{align*}
\end{corollary}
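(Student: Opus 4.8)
The decisive new ingredient in the radial setting is the compactness of the embedding $H_r^{1}(\R^2)\hookrightarrow L^{t}(\R^2)$ for every $t\in(2,\infty)$. Hence from $\bar{u}_n\rightharpoonup\bar{u}_c$ in $H_r^{1}(\R^2)$ I obtain $\bar{u}_n\to\bar{u}_c$ strongly in $L^{t}(\R^2)$ for all $t\in(2,\infty)$, and, passing to a subsequence, $\bar{u}_n\to\bar{u}_c$ a.e. in $\R^2$. Since $f$ and $F$ are continuous, it follows that $f(\bar{u}_n)\bar{u}_n\to f(\bar{u}_c)\bar{u}_c$ and $F(\bar{u}_n)\to F(\bar{u}_c)$ a.e. The plan is to promote this pointwise convergence to convergence of the integrals by means of Vitali's convergence theorem; to apply it I must verify that the families $\{f(\bar{u}_n)\bar{u}_n\}$ and $\{F(\bar{u}_n)\}$ are uniformly integrable and tight.

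First I would control the subcritical part. Using the growth bound \eqref{eq:20230828-e1} with $r\in(2,p)$, $q>4$ and $\alpha>\alpha_0$, one has $|f(\bar{u}_n)\bar{u}_n|\le\varepsilon|\bar{u}_n|^{r}+C_\varepsilon|\bar{u}_n|^{q}(e^{\alpha\bar{u}_n^2}-1)$, and similarly $|F(\bar{u}_n)|$ is bounded through \eqref{2.2}. Since $r>2$, the strong convergence in $L^{r}$ makes $\{|\bar{u}_n|^{r}\}$ convergent in $L^1(\R^2)$, hence uniformly integrable and tight, which disposes of the polynomial term.

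The hard part will be the exponential critical term $|\bar{u}_n|^{q}(e^{\alpha\bar{u}_n^2}-1)$, for which compactness alone is useless since I only have boundedness—not convergence—of the exponential factor. Here I would fix $\alpha>\alpha_0$ close to $\alpha_0$ and $t>1$ close to $1$ so that Lemma \ref{lemma 3.7} applies: the hypothesis $\limsup_{n\to\infty}\|\nabla(\bar{u}_n-\bar{u}_c)\|_2^2<\tfrac{4\pi}{\alpha_0}$ is exactly what guarantees $\sup_n\|e^{\alpha\bar{u}_n^2}-1\|_{t}<\infty$. Writing $t'=\tfrac{t}{t-1}$ and applying Hölder's inequality, for any measurable $\Lambda\subset\R^2$ I get $\int_\Lambda|\bar{u}_n|^{q}(e^{\alpha\bar{u}_n^2}-1)\,\mathrm{d}x\le\|\bar{u}_n\|_{L^{qt'}(\Lambda)}^{q}\,\|e^{\alpha\bar{u}_n^2}-1\|_{t}$. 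Because $qt'>2$, the strong $L^{qt'}$-convergence makes $\{|\bar{u}_n|^{qt'}\}$ uniformly integrable and tight, so the factor $\|\bar{u}_n\|_{L^{qt'}(\Lambda)}^{q}$ is uniformly small whenever $\mathrm{meas}(\Lambda)$ is small and whenever $\Lambda=B_{R}^{c}$ with $R$ large; the uniformly bounded factor $\|e^{\alpha\bar{u}_n^2}-1\|_{t}$ then closes the estimate. This yields uniform integrability and tightness of both families.

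Finally, Vitali's convergence theorem gives $\int_{\R^2}f(\bar{u}_n)\bar{u}_n\,\mathrm{d}x\to\int_{\R^2}f(\bar{u}_c)\bar{u}_c\,\mathrm{d}x$ and $\int_{\R^2}F(\bar{u}_n)\,\mathrm{d}x\to\int_{\R^2}F(\bar{u}_c)\,\mathrm{d}x$; since the limits so identified are independent of the chosen subsequence, the convergences hold for the full sequence, as claimed. I note that for the $F$-identity one could alternatively invoke the Br\'ezis--Lieb splitting of Lemma \ref{corollary 3.1} (valid under $\mathbf{(f_1)}$--$\mathbf{(f_2)}$ alone) and then show that the residual $\int_{\R^2}F(\bar{u}_n-\bar{u}_c)\,\mathrm{d}x\to0$ by the same exponential estimate applied to $\bar{v}_n:=\bar{u}_n-\bar{u}_c\rightharpoonup0$; the corresponding $f$-splitting in that lemma, however, requires the extra hypothesis $f\in C^1$ and so is unavailable here, which is precisely why the direct Vitali argument is the natural route for both statements.
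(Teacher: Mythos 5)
Your proof is correct, and it reaches both identities by a genuinely more unified route than the paper's. You rely on the same two key inputs — Lemma \ref{lemma 3.7} for the uniform $L^t$ bound on $e^{\alpha\bar{u}_n^2}-1$ and the compact embedding $H_r^1(\R^2)\hookrightarrow L^s(\R^2)$, $s>2$ — but you deploy them differently. The paper proves the $f(\bar u_n)\bar u_n$ identity by a weak-times-strong pairing: it shows $\{f(\bar u_n)\}$ is bounded in $L^\tau$ for $\tau>1$ close to $1$, estimates $\int f(\bar u_n)(\bar u_n-\bar u_c)\,\mathrm{d}x$ by H\"older against the strong $L^{\tau'}$ convergence, and disposes of $\int(f(\bar u_n)-f(\bar u_c))\bar u_c\,\mathrm{d}x$ by identifying the weak $L^\tau$ limit of $f(\bar u_n)$ as $f(\bar u_c)$; for the $F$ identity it invokes the Br\'ezis--Lieb splitting of Lemma \ref{corollary 3.1} and then kills the residual $\int F(\bar u_n-\bar u_c)\,\mathrm{d}x$ by the same H\"older/compactness estimate. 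You instead run a single Vitali argument for both integrands, checking uniform integrability and tightness directly from \eqref{eq:20230828-e1}, \eqref{2.2} and the estimate $\int_\Lambda|\bar u_n|^q(e^{\alpha\bar u_n^2}-1)\,\mathrm{d}x\le\|\bar u_n\|_{L^{qt'}(\Lambda)}^q\|e^{\alpha\bar u_n^2}-1\|_t$. What your version buys is uniformity (one mechanism for both statements) and the avoidance of the weak-limit identification step; what the paper's version buys is reuse of the already-established Lemma \ref{corollary 3.1}. Your closing observation — that the $f$-part of the Br\'ezis--Lieb lemma is unavailable here because it requires $f\in C^1$, which is precisely why a direct argument is needed for the first identity — is accurate, and your subsequence bookkeeping (a.e. convergence along a subsequence, then uniqueness of the limit) is handled correctly.
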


\begin{proof}
Recalling \eqref{eq:20230828-e1}, by Lemma \ref{lemma 3.7}, one can show that $\{f(\bar{u}_n)\}$ is bounded in $L^\tau(\R^2)$  for $1<\tau<\min\{t,2\}$ close to one. By the radial compact embedding, we have that $\bar{u}_n\to \bar{u}_c$ in $L^{\tau^{\prime}}$ with $\tau^{\prime}=\frac{\tau}{\tau-1}>2.$ Thus,
\begin{align*}
\int_{\R^2}|f(\bar{u}_n)(\bar{u}_n-\bar{u}_c)|\mathrm{d}x\leq\|f(\bar{u}_n)\|_\tau\|\bar{u}_n-\bar{u}_c\|_{\tau^{\prime}}=o_n(1).
\end{align*}
On the other hand, up to subsequence, $f(\bar{u}_n)\rightharpoonup f(\bar{u}_c)$ weakly in $L^{\tau}(\R^2),$ which implies
\begin{align*}
\int_{\R^2}[f(\bar{u}_n)-f(\bar{u}_c)]\bar{u}_c\mathrm{d}x=o_n(1).
\end{align*}
Hence,
$$
\big|\int_{\R^2}f(\bar{u}_n)\bar{u}_n-f(\bar{u}_c)\bar{u}_c\mathrm{d}x\big|\leq\big|\int_{\R^2}[f(\bar{u}_n)-f(\bar{u}_c)]\bar{u}_c\mathrm{d}x\big|+\big|\int_{\R^2}f(\bar{u}_n)(\bar{u}_n-\bar{u}_c)\mathrm{d}x\big|=o_n(1)
$$
which implies that
\begin{align*}
\int_{\R^2}f(\bar{u}_n)\bar{u}_n\mathrm{d}x=\int_{\R^2}f(\bar{u}_c)\bar{u}_c\mathrm{d}x+o_n(1),
\end{align*}

\noindent Thanks to Corollary \ref{corollary 3.1}, in order to prove $\displaystyle \int_{\R^2}F(\bar{u}_n)\mathrm{d}x=\int_{\R^2}F(\bar{u}_c)\mathrm{d}x+o_{n}(1).
$, it is sufficient to show that
$
\displaystyle\int_{\R^2}F(\bar{u}_n-\bar{u}_c)\mathrm{d}x=o_n(1).
$
According to \eqref{2.2}, we only need to prove that
\begin{align}\label{3.29}
\int_{\R^2}|\bar{u}_n-\bar{u}_c|^q[e^{\alpha|\bar{u}_n-\bar{u}_c|^2}-1]\mathrm{d}x=o_n(1).
\end{align}
Indeed, by Lemma \ref{lemma 3.7} and the H\"{o}lder inequality, there exists $C>0$ independent of $n$ such that
\begin{align*}
&\int_{\R^2}|\bar{u}_n-\bar{u}_c|^q[e^{\alpha|\bar{u}_n-\bar{u}_c|^2}-1]\mathrm{d}x\notag\\
&\leq\||\bar{u}_n-\bar{u}_c|^q\|_{t^{\prime}}\|e^{\alpha|\bar{u}_n-\bar{u}_c|^2}-1\|_t\notag\\
&\leq C\||\bar{u}_n-\bar{u}_c|^q\|_{t^{\prime}},
\end{align*}
where $t^{\prime}=\frac{t}{t-1}.$ By the radial compact embedding again, we get that $\bar{u}_n\to \bar{u}_c$ in $L^{qt^{\prime}}(\R^2)$, and then \eqref{3.29} holds.
\end{proof}

\section{Normalized local minimizer}\lab{sec3}
\noindent In the study of normalized solutions to Schr\"{o}dinger type equations in the whole space, a crucial point is the behaviour of the related energy functional with respect to the $L^2$-norm invariance group action. Let us then introduce the scaling operator
$T:H^{1}(\R^2)\times\R^+\to H^{1}(\R^2)$ defined as
$$T(u,s)=su(sx)
$$
 which preserves the $L^2$-norm, namely $\|T(u,s)\|_2=\|u\|_2,~\forall~s>0$. Let us then consider the fiber
\begin{align}\label{20230308e1}
\Psi_{u}(s)=I(T(u,s))=\frac{s^2}{2}\|\nabla u\|_{2}^{2}-\frac{1}{s^2}\int_{\R^2}F(su)\mathrm{d}x
\end{align}
and note that if $(u,\lambda)$ is any solution to \eqref{20230228e1}-\eqref{20230228e2}, then $\frac{d}{ds}\Psi_u(s)|_{s=1}=0$, that is,
\begin{equation}\label{Pmanifold}
\mathcal{P}(u)=\|\nabla u\|_2^2+2\int_{\R^2}F(u)\mathrm{d}x-\int_{\R^2}f(u)u\mathrm{d}x=0.
\end{equation}
The set
$$
\{u: \mathcal P(u)=0\}
$$
is the so-called {\it{Pohozeav manifold}}, and will play a key role in our discussion.
Since $f$ has exponential critical growth at infinity, one has $\Psi_{u}(s)\to -\infty$  as $s\to +\infty$, so that $I$ is unbounded from below on the  constraint $S_c$. On the other hand, the mass sub-critical behaviour of  $f(s)$ near $0$ suggests the presence of a local minimum, at least for some suitable given mass.
\subsection{Local minimum structure for small mass}
\begin{lemma}\label{20230301-lemma1}
Assume that $\mathbf{(f_1)}$ and $\mathbf{(f_2)}$ hold.
Then there exist $c_0>0$, $\rho^*\in (0,\frac{4\pi}{\alpha_0})$ and $\tau_0>0$ such that for any $c\in (0,c_0)$,
\begin{align}\label{20230228e3}
m_c:=\inf _{u \in V_c} I(u)<0<\tau_0<\inf _{u \in \partial V_c} I(u),
\end{align}
where $V_c:=\big\{u \in S_{c}:\|\nabla u\|_{2}^2<\rho^*\big\}$.
\end{lemma}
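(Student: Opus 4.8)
The plan is to establish the two inequalities in \eqref{20230228e3} separately: first produce a point inside $V_c$ carrying negative energy (so that $m_c<0$), and then prove a uniform positive lower bound for $I$ on the boundary $\partial V_c=\{u\in S_c:\|\nabla u\|_2^2=\rho^*\}$. The guiding idea is to fix the radius $\rho^*\in(0,4\pi/\alpha_0)$ once and for all, its admissible size being dictated only by the Trudinger--Moser threshold, and then exploit the smallness of the mass $c$ to render every nonlinear contribution negligible compared with the leading kinetic term $\tfrac12\rho^*$.

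For the boundary estimate, on $\partial V_c$ one has $I(u)=\tfrac12\rho^*-\int_{\R^2}F(u)\,\mathrm{d}x$, so it suffices to show $\int_{\R^2}F(u)\,\mathrm{d}x$ is small uniformly in $u\in\partial V_c$. I would use the growth bound \eqref{2.2}, with $r\in(2,p)$, $q>4$ and $\alpha>\alpha_0$, to split $\int_{\R^2}F(u)\,\mathrm{d}x\le \varepsilon\|u\|_r^r+C_\varepsilon\int_{\R^2}|u|^q(e^{\alpha u^2}-1)\,\mathrm{d}x$. The subcritical term is controlled by the Gagliardo--Nirenberg inequality, giving $\|u\|_r^r\le C(\rho^*)^{(r-2)/2}c^2$. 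For the exponential term I would apply H\"older with exponents $t>1$ and $t'$, writing $\int_{\R^2}|u|^q(e^{\alpha u^2}-1)\,\mathrm{d}x\le \|u\|_{qt'}^{q}\,\|e^{\alpha u^2}-1\|_t$, then bound $\|e^{\alpha u^2}-1\|_t^t\le\int_{\R^2}(e^{t\alpha u^2}-1)\,\mathrm{d}x$ and normalize $u$ by $\sqrt{\rho^*}$ to invoke Cao's inequality \eqref{caoineq}. This is precisely where $\rho^*<4\pi/\alpha_0$ enters: since $\alpha_0\rho^*<4\pi$, I can choose $\alpha>\alpha_0$ close to $\alpha_0$ and $t>1$ close to $1$ with $t\alpha\rho^*<4\pi$, whence $\int_{\R^2}(e^{t\alpha u^2}-1)\,\mathrm{d}x\le C\,c$ with $C=C(\rho^*,\alpha,t)$ independent of $c$. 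Both pieces therefore tend to $0$ as $c\to0^+$ with $\rho^*$ held fixed; choosing $c_0$ small forces $\int_{\R^2}F(u)\,\mathrm{d}x\le\tfrac14\rho^*$ for all $u\in\partial V_c$, and hence $I(u)\ge\tfrac14\rho^*=:\tau_0>0$.

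To obtain $m_c<0$ I would fix any nonnegative $u\in S_c$ and test with the mass-preserving fiber $\Psi_u(s)=I(T(u,s))$ from \eqref{20230308e1} as $s\to0^+$. Using the lower bound $F(s)\ge\varepsilon_1 s^{p}$ from \eqref{eq:20230425-be1} one gets $\Psi_u(s)\le\tfrac{s^2}{2}\|\nabla u\|_2^2-\varepsilon_1 s^{p-2}\|u\|_p^p$; since $p\in(2,4)$ the exponent $p-2$ is strictly smaller than $2$, so $s^2=o(s^{p-2})$ and the negative term dominates, giving $\Psi_u(s)<0$ for $s$ small. As $\|\nabla T(u,s)\|_2^2=s^2\|\nabla u\|_2^2\to0$, for such small $s$ one has $T(u,s)\in V_c$ with negative energy, so $m_c\le\Psi_u(s)<0$.

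I expect the main obstacle to be the exponential term in the boundary estimate: one must extract a bound on $\int_{\R^2}(e^{t\alpha u^2}-1)\,\mathrm{d}x$ that is uniform over $\partial V_c$ and vanishing as $c\to0^+$, which is exactly what forces the constraint $\rho^*<4\pi/\alpha_0$ together with the freedom to push $\alpha\downarrow\alpha_0$ and $t\downarrow1$. By contrast, the polynomial estimates and the construction of the negative-energy point are routine once this Trudinger--Moser bookkeeping is arranged.
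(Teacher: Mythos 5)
Your proposal is correct and follows essentially the same route as the paper: the boundary bound via \eqref{2.2}, H\"older, Cao's inequality \eqref{caoineq} with $t\alpha\rho^*<4\pi$, and Gagliardo--Nirenberg, and the negative-energy point via the fiber $\Psi_u(s)$ with $F(s)\ge\varepsilon_1 s^p$ and $p<4$. The only difference is organizational: the paper derives the lower bound $I(u)\ge\rho\,f(c,\rho)$ on the whole set $\|\nabla u\|_2^2=\rho\le\rho^*$ rather than just on $\partial V_c$, which it later exploits (Remark \ref{remark:20230425-r1}) to localize the minimizer's gradient norm below $\rho_c=Ac^\tau$; for the lemma itself your direct estimate of $\int_{\R^2}F(u)\,\mathrm{d}x$ on the boundary suffices (just take $\tau_0$ strictly below your lower bound $\tfrac14\rho^*$ to get the strict inequality).
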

\begin{proof}
Let $0<\rho^*<\frac{4\pi}{\alpha_0}$ and $M>0$ be sufficiently large.
By Cao's inequality \eqref{caoineq}, for any $t>1$ close enough to $1$ and for any $\alpha>\alpha_0$ close to $\alpha_0$, we have
\begin{equation*}
\sup_{\|\nabla u\|_2^2\leq \rho^*, \|u\|_2^2\leq M} \|e^{\alpha u^2}-1\|_t <\infty.
\end{equation*}
Let $q>4$ and by H\"{o}lder's inequality we have
\begin{align}\label{3.6}
\int_{\R^2}|u|^{q}\big(e^{\alpha u^2}-1\big)\mathrm{d}x\leq C\|u\|_{qt^{\prime}}^{q}, \quad \forall~u\in H^1(\R^2),~\|\nabla u\|_2^2\leq \rho^*,~\|u\|_2^2\leq M,
\end{align}
 where $t^{\prime}:=\frac{t}{t-1}$ is the Young conjugate exponent of $t$. \\
By  \eqref{2.2} and \eqref{3.6}, for any $\varepsilon>0$ and $r\in(2,p)$, there exists $C_\varepsilon>0$ such that
\beq\lab{eq:20221225-e1}
\int_{\R^2}F(u)\mathrm{d}x\leq \varepsilon\|u\|_{r}^{r}+C_\varepsilon\|u\|_{qt^{\prime}}^{q},~\forall~u\in H^1(\R^2),~\|\nabla u\|_2^2\leq \rho^*,~\|u\|_2^2\leq M.
\eeq
We next apply the following Gagliardo-Nirenberg inequality,
\begin{align*}
\|u\|_s\leq C_{s}\|\nabla u\|_{2}^{\gamma_s}\|u\|_{2}^{1-\gamma_s},~\gamma_s:=1-\frac{2}{s},~u\in H^1(\R^2), \ \ \forall~s\in(2, +\infty).
\end{align*}
Then, for any $u \in H^1(\R^2)$ with $\|u\|_2^2=c^2\leq M$ and $\|\nabla u\|_2^2=\rho\leq \rho^*$, from \eqref{eq:20221225-e1} we obtain
\begin{align}\label{20230302e1}
I(u)&=\frac{1}{2}\|\nabla u\|_{2}^2-\int_{\R^2}F(u)\mathrm{d}x\nonumber\\
&\geq\frac{1}{2}\|\nabla u\|_{2}^2-\varepsilon\|u\|_{r}^{r}-C_\varepsilon\|u\|_{qt^{\prime}}^{q}\\
&\geq\frac{1}{2}\|\nabla u\|_{2}^2-\varepsilon C_{r}^{r}\|\nabla u\|_{2}^{r-2}\|u\|_{2}^{2}-C_\varepsilon C_{qt'}^{q}\|\nabla u\|_{2}^{q-\frac{2}{t'}}\|u\|_{2}^{\frac{2}{t'}}\nonumber\\
&=\|\nabla u\|_{2}^2\Big[\frac{1}{2}-\varepsilon c^2 C_{r}^{r}\|\nabla u\|_{2}^{\bar{p}-4}-C_\varepsilon c^{\frac{2}{t'}} C_{qt'}^{q}\|\nabla u\|_{2}^{q-4+\frac{2}{t}}\Big]\nonumber\\
&:=\rho f(c,\rho)\nonumber,
\end{align}
where
\begin{equation}\lab{eq:20230425-xe2}
f(c,\rho):=\frac{1}{2}-c^2a\rho^{\frac{r-4}{2}}- c^{\frac{2}{t'}} b\rho^{\frac{q}{2}-2+\frac{1}{t}},\quad 0<\rho\leq \rho^*,
\end{equation}
and where $a, b$ are positive constants which depend on $\varepsilon, r, q, t$ but not on $c$. Let us choose $t$  sufficiently close to $1$, such that $\frac{q}{2}-2+\frac{1}{t}>1$. For any $c>0$ fixed, consider the map  $\rho\mapsto g_c(\rho):=f(c,\rho)$ for all $ \rho>0$.  Since $r<4$, it is easy to verify that $g_c(\rho)$ has a unique global maximum point, $\rho_c$,  on $(0,+\infty)$,  given by
\begin{align}\lab{eq:20230425-xe1}
\rho_c=Ac^\tau,
\end{align}
where $A:=A(\varepsilon,r,q,t) >0$ and $\tau:=\tau(r,q,t)=\frac 4{t(q-r)+2}>0$, and $\rho_c\leq \rho^*$ for all $c\in (0,(\rho^*/A)^{\frac 1\tau})$.
By inspection,
\begin{align*}
\max\limits_{\rho>0}g_c(\rho)=g_c(\rho_c)=\frac{1}{2}-(aA^{\frac{r-4}{2}}+bA^{\frac{q}{2}-2+\frac{1}{t}})c^{2\frac{r-2+t(q-r)}{2+t(q-r)}}>0,
\end{align*}
if $c\in (0, c_1]$ where $c_1$ is small enough. Recalling \eqref{eq:20230425-xe2}, it is easy to see that there exists some $c_2>0$ such that
\begin{align}\label{20230723e1}
f(c,\rho^*)>0, \forall~ c\in (0,c_2].
\end{align}
Now take $c_0=\min\{c_1,c_2,\sqrt{M}\}$: since, for any fixed $c$, $g_c(\rho)$ decreases in $[\rho_c, \rho^*]$ we have $g_c(\rho)\geq f(c,\rho^*)>0$ for all $c\in (0,c_0]$ and $\rho\in [\rho_c,\rho^*]$.
Furthermore, by \eqref{20230302e1} and \eqref{20230723e1}, we obtain
\beq\lab{eq:20230828-xbe1}
\inf_{u \in \partial V_c}I(u)\geq\rho^* f(c,\rho^*)>\tau_0:=\rho^* f(c_0,\rho^*)>0,~\forall~c\in (0,c_0).
\eeq
Let us now prove that $\inf\limits_{u \in V_c} I(u)<0$. Consider $u\in S_c$ be fixed: recalling \eqref{eq:20230425-be1} and \eqref{20230308e1}, we have
 \begin{align*}
 I(T(u,s))=&\frac{s^2}{2}\|\nabla u\|_2^2-\frac{1}{s^2}\int_{\R^2}F(su)\mathrm{d}x\\
 \leq&\frac{s^2}{2}\|\nabla u\|_2^2 - \frac{\varepsilon_1}{s^2}\|su\|_{p}^{p}\\
 =&s^{p-2}[\frac{1}{2}s^{4-p}\|\nabla u\|_2^2-\varepsilon_1\|u\|_{p}^{p}].
 \end{align*}
As $s>0$ is small enough, we have $\|\nabla T(u,s)\|_2^2=s^2\|\nabla u\|_2^2<\rho^*$ and $I(T(u,s))<0$ due to $p<4$.
Hence, for $s>0$ sufficiently small, $T(u,s)\in V_c$ and thus $m_c:=\inf\limits_{u\in V_c}I(u)<0$.
\end{proof}

\br\lab{remark:20230425-r1}
By the previous proof, it is clear that
\begin{align*}
I(u)>0, \forall~ u\in S_c, c\in (0,c_0)~\hbox{and}~\|\nabla u\|_2^2\in [\rho_c,\rho^*],
\end{align*}
where $\rho_c$ is given by \eqref{eq:20230425-xe1}. Hence, if $m_c$ is attained by some $u_c$, then $\|\nabla u_c\|_2^2<\rho_c$. Thus by $\rho_c\rightarrow 0$ as $c\rightarrow 0^+$, we conclude that  $u_c\rightarrow 0$ in $H^1(\R^2)$ as $c\rightarrow 0^+$.
\er
%
\subsection{Existence of a local minimizer}
Let $\rho^*$ be given as in Lemma \ref{20230301-lemma1} and define
\begin{align*}
\tilde{V}_c:=\{u\in H^1(\R^2): 0<\|u\|_2\leq c, \|\nabla u\|_2^2<\rho^*\}
\end{align*}
together with
\begin{align*}
\tilde{m}_c:=\inf_{u\in \tilde{V}_c}I(u).
\end{align*}

\bl\lab{lemma:20230426-l1}
Under the assumptions $\mathbf{(f_1)}$-$\mathbf{(f_2)}$, there exists $c_0>0$ small enough such that for any $c\in (0,c_0)$ one has:
\begin{itemize}
\item[(i)]$\tilde{m}_c$ is attained by some positive radial function $u\in S_{r,c}$;
\item[(ii)]$m_c=\tilde{m}_c$ and it is strictly decreasing with respect   $c\in(0,c_0)$.
\end{itemize}
\el
\begin{proof}  By the very definition $\tilde{m}_c\leq m_c<0$ for $c\in (0,c_0)$.
On the other hand, for any $u\in \tilde{V}_c$, let $u^*$ be its spherical decreasing
rearrangement and since $f(s)=0$ for $s\leq 0$, we get
$$\int_{\R^2}F(u)\mathrm{d}x\leq \int_{\R^2}F(|u|)\mathrm{d}x=\int_{\R^2}F(u^*)\mathrm{d}x~~\hbox{and}~~\|\nabla u^*\|_2^2\leq \|\nabla u\|_2^2.$$
Hence, $I(u^*)\leq I(u)$. So, we can consider a radial decreasing minimizing sequence $\{u_n\}\subset \tilde{V}_c\cap H_r^1(\R^2)$, with $I(u_n)\rightarrow \tilde{m_c}$. Since it is bounded in $H^1(\R^2)$, up to a subsequence we may assume $u_n\rightharpoonup u$ in $H_r^1(\R^2)$.
Furthermore, applying \eqref{2.2} one has
\begin{align*}
\int_{\R^2}F(u_n)\mathrm{d}x\rightarrow \int_{\R^2}F(u)\mathrm{d}x,
\end{align*}
by applying Cao's inequality \eqref{caoineq} and thanks to the choice of $\rho^*$, besides the embedding  $H_{r}^{1}(\R^2)\hookrightarrow L^{t}(\R^2)$ is compact for all $t\in(2,\infty)$. By Remark \ref{remark:20230425-r1}, we may assume that $\|\nabla u_n\|_2^2< \rho_c<\rho^*$, so that also $\|\nabla u\|_2^2< \rho^*$. Furthermore, by
\begin{align*}
I(u)\leq \liminf_{n\rightarrow \infty}I(u_n)=\tilde{m}_c\leq m_c<0,
\end{align*}
we conclude that  $u\neq 0$ and thus $u\in \tilde{V}_c$. Then, by definition,  $I(u)\geq \tilde{m}_c$: this combined with the previous inequality yields $\tilde{m}_c$ is attained by $u$.

\noindent Next, we will prove that $u\in S_{r,c}$, which implies directly $m_c=\tilde{m}_c$, since $m_c\leq I(u)=\tilde{m}_c\leq m_c$. \newline Suppose by contradiction that $\|u\|_2<c$. Then,
\begin{align*}
\tilde{m}_c=I(u)=\min_{\|\nabla v\|_2^2<\rho^*, 0<\|v\|_2<c} I(v).
\end{align*}
Since the set $\{\|\nabla v\|_2^2<\rho^*, 0<\|v\|_2<c\}$ is open, we conclude that $u>0$ satisfies
\begin{align*}
-\Delta u=f(u)~\hbox{in}~\R^2,
\end{align*}
which is a contradiction, see Remark \ref{remark:20230425-br1}.

\noindent Let us now prove that $m_c$ decreases strictly in $(0,c_0)$. By definition, $m_c$ is not increasing in $(0,c_0)$; if by contradiction it is not strictly decreasing, then we could find $0<c_1<c_2<c_0$ such that $m_c\equiv m_{c_1}, \forall~ c\in [c_1,c_2]$. Let $u_1\in S_{r,c_1}$ be such that attains $m_{c_1}$: then $\|u_1\|_2\leq c_1<c_2$ and it attains also $m_{c_2}$. So $-\Delta u_1=f(u_1)$ in $\R^2$, which is again  contradiction.
\ep

\vspace*{0,3cm}

\noindent We are now ready to prove Theorem \ref{theorem 1.1}, which establishes the existence of normalized local minimizers.

\vspace*{0,3cm}

\noindent
{\bf Proof of Theorem \ref{theorem 1.1}:}
By Lemma \ref{20230301-lemma1}, under the assumptions $\mathbf{(f_1)}$ and $\mathbf{(f_2)}$, there exists $c_0>0$ small enough such that $I$ possesses a local minimum geometric structure:
$$\inf _{u \in V_c} I(u)<0<\inf _{u \in \partial V_c} I(u),$$
where $V_c:=\big\{u \in S_{c}:\|\nabla u\|_{2}^2<\rho^*\big\}$ for some $\rho^*\in (0, \frac{4\pi}{\alpha_0})$.
In particular, by Lemma \ref{lemma:20230426-l1}, $m_c:=\inf\limits_{u \in V_c} I(u)$ is attained by some positive radial function, denoted by $u_c$. Then there exists some $\lambda_c\in \R$ such that $(u_c,\lambda_c)$ is a normalized solution to Eq.\eqref{20230228e1}. By the rearrangement inequality again, $I(u_c^*)\leq I(u_c)$. Hence, $u_c=u_c^*$ which decreases with respect to $|x|$. In particular, by Remark \ref{remark:20230425-br1}, we have that $\lambda_c>0$. Finally, the strong maximum principle implies that $u_c$ is positive.
\hfill$\Box$

\subsection{Normalized ground state solution}\lab{sec:20230802-4}
Let us prove now that, assuming  in addition the condition $\mathbf{(f_3)}$, actually the normalized solution $u_c$ found in Theorem \ref{theorem 1.1} is indeed a normalized ground state solution.

\begin{lemma}\label{lemma 20230813}
 Under the assumptions $\mathbf{(f_1)}, \mathbf{(f_2)}$ and $\mathbf{(f_3)}$ the normalized solution $u_c$ found in Theorem \ref{theorem 1.1} is a normalized ground state solution for any $c\in (0, c_0)$, where $c_0$ is sufficiently small.
\end{lemma}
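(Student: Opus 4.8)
The plan is to argue by contradiction. Suppose that for some $c\in(0,c_0)$ the local minimizer $u_c$ from Theorem \ref{theorem 1.1} is \emph{not} a ground state, so there exists a normalized solution $(w,\lambda_w)$ with $I(w)<I(u_c)=m_c<0$. By Remark \ref{remark:20230425-br1} we may take $w$ positive and radially decreasing with $\lambda_w>0$, and as recorded after \eqref{20230308e1} it satisfies the Pohozaev identity $\mathcal{P}(w)=0$, i.e. $\|\nabla w\|_2^2=\int_{\R^2}f(w)w\,\mathrm{d}x-2\int_{\R^2}F(w)\,\mathrm{d}x$. The whole strategy is to show that any normalized solution with negative energy must in fact lie in $V_c$; since $m_c=\inf_{V_c}I$ is attained by $u_c$, this forces $I(w)\geq m_c$ and yields the contradiction.

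The key estimate is a lower bound showing $I(w)>0$ whenever $\|\nabla w\|_2^2\geq\rho^*$. Here I would invoke $\mathbf{(f_3)}$ through Remark \ref{remark:20230425-br2}(iv), namely $F(s)\leq C_\varepsilon s^p+\varepsilon f(s)s$, and combine it with the Pohozaev identity to absorb the $\int f(w)w$ term: substituting $\int f(w)w=\|\nabla w\|_2^2+2\int F(w)$ and taking $\varepsilon\leq\frac14$ gives $\int_{\R^2}F(w)\,\mathrm{d}x\leq C\|w\|_p^p+2\varepsilon\|\nabla w\|_2^2$. The Gagliardo-Nirenberg inequality $\|w\|_p^p\leq C_p\|\nabla w\|_2^{p-2}\|w\|_2^2=C_pc^2\|\nabla w\|_2^{p-2}$ then turns the energy into
\[
I(w)\geq\Big(\tfrac12-2\varepsilon\Big)\|\nabla w\|_2^2-C'c^2\|\nabla w\|_2^{p-2}
=\|\nabla w\|_2^2\Big[\tfrac12-2\varepsilon-C'c^2\|\nabla w\|_2^{p-4}\Big].
\]
Since $p-4<0$, the map $t\mapsto t^{p-4}$ is decreasing, so on $\{\|\nabla w\|_2^2\geq\rho^*\}$ the bracket is bounded below by $\tfrac12-2\varepsilon-C'c^2(\rho^*)^{(p-4)/2}$, which is strictly positive once $c_0$ is small enough. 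Hence $I(w)>0$ there.

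Combining the estimates finishes the argument. On the one hand, by Remark \ref{remark:20230425-r1} one has $I(u)>0$ for every $u\in S_c$ with $\|\nabla u\|_2^2\in[\rho_c,\rho^*]$; on the other hand the estimate above gives $I(u)>0$ when $\|\nabla u\|_2^2\geq\rho^*$. Therefore a normalized solution with $I(w)<0$ must satisfy $\|\nabla w\|_2^2<\rho_c<\rho^*$, so $w\in V_c$ and $I(w)\geq\inf_{V_c}I=m_c=I(u_c)$, contradicting $I(w)<I(u_c)$. Since every normalized solution with nonnegative energy trivially has energy above $m_c<0$, we conclude $I(u_c)=\min\{I(u):u\in S_c,\ I\big|'_{S_c}(u)=0\}$, i.e. $u_c$ is a normalized ground state.

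I expect the main obstacle to be the a priori possibility that a competing normalized solution has very large gradient norm; the exponential critical growth offers no cheap control in that regime, and it is precisely the FMR-type balance $\mathbf{(f_3)}$ between $F$ and $fs$, fed through the Pohozaev identity, that rescues the sign of the energy. The smallness of $c_0$ is used exactly to make the Gagliardo-Nirenberg remainder $C'c^2(\rho^*)^{(p-4)/2}$ negligible against $\tfrac12$.
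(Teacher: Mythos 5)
Your proposal is correct and follows essentially the same route as the paper: both feed the Pohozaev identity into the $\mathbf{(f_3)}$-estimate \eqref{eq:20230425-we1} and the Gagliardo--Nirenberg inequality to force any negative-energy normalized solution to have small gradient norm, hence to lie in $V_c$, where $m_c$ is the infimum. The only differences are organizational (the paper runs the contradiction along a sequence $c_n\downarrow 0$ and shows $\|\nabla v_n\|_2\to 0$, while you fix $c$ and split according to the size of $\|\nabla w\|_2^2$), plus one harmless overreach: a competitor in the ground-state minimization need not be nonnegative, so Remark \ref{remark:20230425-br1} does not let you ``take $w$ positive'' --- but since your estimates never actually use positivity, nothing is lost.
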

\begin{proof}
Let us argue by contradiction. Suppose that there exists $c_n\downarrow 0$ and a corresponding  sequence $\{v_n\}$ such that
\beq\lab{eq:20230426-ze1}
\|v_n\|_2^2=c_n^2,~I\big|'_{S_{c_n}}(v_n)=0,~ I(v_n)< m_{c_n}.
\eeq
By Lemma \ref{20230301-lemma1}, without loss of generality, we may assume that $m_{c_n}<0$ for all $n$.
By
\begin{align*}
I(v_n)=\frac{1}{2}\|\nabla v_n\|_2^2-\int_{\R^2}F(v_n)\mathrm{d}x<m_{c_n}<0
\end{align*}
and
\begin{align*}
\mathcal{P}(v_n)=\|\nabla v_n\|_2^2+2\int_{\R^2}F(v_n)\mathrm{d}x -\int_{\R^2}f(v_n)v_n\mathrm{d}x=0,
\end{align*}
recalling \eqref{eq:20230425-we1}, we have that
\begin{align*}
\frac{1}{2}\|\nabla v_n\|_2^2<\int_{\R^2}F(v_n)\mathrm{d}x \leq \varepsilon \int_{\R^2}f(v_n)v_n\mathrm{d}x +C_\varepsilon\|v_n\|_{p}^{p},
\end{align*}
where $2<p<4$. Then
\begin{align*}
\|\nabla v_n\|_2^2+2\int_{\R^2}F(v_n)\mathrm{d}x<&4\left(\varepsilon \int_{\R^2}f(v_n)v_n \mathrm{d}x +C_\varepsilon \|v_n\|_{p}^{p}\right)\\
=&4\varepsilon \left(\|\nabla v_n\|_2^2+2\int_{\R^2}F(v_n)\mathrm{d}x\right)+4C_\varepsilon \|v_n\|_{p}^{p}.
\end{align*}
Combine this with the Gagliardo-Nirenberg inequality to get
\begin{align*}
(1-4\varepsilon) \|\nabla v_n\|_2^2 \leq &(1-4\varepsilon)\left(\|\nabla v_n\|_2^2+2\int_{\R^2}F(v_n)\mathrm{d}x\right)\\
\leq&4C_\varepsilon \|v_n\|_{p}^{p}\\
\leq& 4C_{\varepsilon, p} \|\nabla v_n\|_{2}^{p-2} \|v_n\|_2^2.
\end{align*}
Choose $\varepsilon \in (0, \frac{1}{4})$: noting that $\|v_n\|_2^2=c_n^2\rightarrow 0$, we conclude that
$$\|\nabla v_n\|_2^2\leq o(1)\|\nabla v_n\|_{2}^{p-2}.$$
Since $2<p<4$, we have that $\|\nabla v_n\|_2\rightarrow 0$, as $n\rightarrow \infty$. Hence, for $n$ large, $\|\nabla v_n\|_2^2<\rho^*$, and thus
 $v_n\in V_{c_n}$, which implies that $I(v_n)\geq \inf\limits_{u\in V_{c_n}}I(u)=m_{c_n}$, which contradicts \eqref{eq:20230426-ze1}.
\end{proof}

\subsection{Asymptotic behavior of the local minimizer as $c\to 0^+$}\lab{sec:20230802-5} In this subsection we answer to question ({\bf{Q3}}). In order to investigate the asymptotic behavior of $u_c$ as $c\to 0^+$, we first give an estimate of the Lagrange multiplier $\lambda_c$ for $c>0$ small, which will be useful in the proof of Theorem \ref{th:20230426-xt1}.
\bl\lab{lemma:20230426-zl1}
Under the assumptions $\mathbf{(f_1)}$-$\mathbf{(f_3)}$,
for $c\in (0,c_0)$, let $(u_c,\lambda_c)$ be the normalized ground state solution given by Theorem  \ref{th:20230425-t1}. Then $\lambda_c\rightarrow 0$, as $c\rightarrow 0^+$.
\el
\begin{proof}
Arguing as in the proof of Theorem \ref{th:20230425-t1},  $\|\nabla u_c\|_2\rightarrow 0$ as $c\rightarrow 0^+$, see also  Remark \ref{remark:20230425-r1}.
Furthermore, following the same lines as above,
\begin{align*}
\|\nabla u_c\|_2^2+\lambda_c c^2 =&\int_{\R^2}f(u_c)u_c \mathrm{d}x\\
=&\|\nabla u_c\|_2^2+2\int_{\R^2}F(u_c)\mathrm{d}x\\
\leq& 4 \left(\varepsilon \int_{\R^2}f(u_c)u_c \mathrm{d}x +C_\varepsilon \|u_c\|_{p}^{p}\right)\\
=&4\varepsilon \left(\|\nabla u_c\|_2^2+\lambda_c c^2\right)+4 C_\varepsilon\|u_c\|_{p}^{p},
\end{align*}
where $p\in (2,4)$. So, by the Gagliardo-Nirenberg inequality,
\begin{align*}
(1-4\varepsilon)\left(\|\nabla u_c\|_2^2+ \lambda_c c^2\right) \leq 4C_\varepsilon \|u_c\|_{p}^{p} \leq C_{\varepsilon,p} \|\nabla u_c\|_{2}^{p-2} c^2.
\end{align*}
Choose $\varepsilon<\frac{1}{4}$: we obtain
\begin{align*}
0<\lambda_c<\frac{(1-4\varepsilon)\left(\|\nabla u_c\|_2^2+ \lambda_c c^2\right)}{(1-4\varepsilon)c^2}\leq \frac{C_{\varepsilon,p} }{1-4\varepsilon} \|\nabla u_c\|_{2}^{p-2}\rightarrow 0,~\hbox{as}~c\rightarrow 0^+,
\end{align*}
since $p>2$ and $\|\nabla u_c\|_2\rightarrow 0$. We remark that the formula above also implies $\frac{\|\nabla u_c\|_2^2}{c^2}\rightarrow 0$ as $c\rightarrow 0^+$.
\ep

\br\lab{remark:20230814-r2}
The  proofs of the asymptotic results \cite[Theorem 4.5, Theorem 4.6,Theorem 5.1 and Lemma 6.4]{Jeanjean2024} strongly rely on the property $\limsup\limits_{c\rightarrow 0^+}\|u_c\|_\infty<+\infty$. However, in the planar framework, differently from the higher dimensional case, there is no corresponding limit equation at infinity, which prevents us to apply the same method as in \cite[Lemma 3.25 and Lemma 4.12]{Radulescu2024}.
\er

\bl\lab{lemma:20230814-bl1}
Under the assumptions $\mathbf{(f_1)}$-$\mathbf{(f_3)}$,
for $c\in (0,c_0)$, let $(u_c,\lambda_c)$ be the normalized ground state solution given by Theorem  \ref{th:20230425-t1}, then we have $\|u_c\|_\infty\rightarrow 0$, as $c\rightarrow 0^+$.
\el
\begin{proof} Let $k>1$ be fixed: we claim that $\limsup\limits_{c\rightarrow 0^+}\int_{\R^2}(e^{ku_c^2}-1)\mathrm{d}x\leq M$ and $M>0$ is independent of $k$.  Indeed, for any $\alpha\in (0,4\pi)$ and $k>1$ fixed, one can see that $k\|u_c\|_2^2<\alpha$ provided $c>0$ small enough since $u_c\rightarrow 0$ in $L^2(\R^2)$. On the other hand, by the proof of Lemma \ref{lemma:20230426-zl1}, we have that $\frac{\|\nabla u_c\|_2^2}{\|u_c\|_2^2}=\frac{\|\nabla u_c\|_2^2}{c^2}\rightarrow 0$ as $c\rightarrow 0^+$.
Hence, by Cao's inequality \eqref{caoineq},
\beq\lab{eq:20230814-xe1}
\limsup_{c\rightarrow 0^+}\int_{\R^2}(e^{ku_c^2}-1)\mathrm{d}x\leq \limsup_{c\rightarrow 0^+}\int_{\R^2} \Big(e^{\alpha \big(\frac{u_c}{\|u_c\|_2}\big)^2}-1\Big)\mathrm{d}x\leq C(\alpha).
\eeq

Under the assumptions $\mathbf{(f_1)}$ and $\mathbf{(f_2)}$,  let $\tilde{p}\in (2,p)$ and $q>4$ be fixed. Then for any $r>1$ and $\varepsilon>0$, one can find some $C_\varepsilon>0$ such that
$$
|f(s)|^r\leq \varepsilon |s|^{r(\tilde{p}-1)}+C_\varepsilon|s|^{r(q-1)}(e^{\alpha r s^2}-1),~~\forall~s\in\R.
$$
Noting that $u_c\rightarrow 0$ in $L^t(\R^2)$ for any $t\in [2,+\infty)$, combining with \eqref{eq:20230814-xe1} and H\"older inequality, one can deduce that
$$
\limsup_{c\rightarrow 0^+} \|f(u_c)\|_{r}=0.
$$
Recalling that $\lambda_c\rightarrow 0$, as $c\rightarrow 0^+$ and by $u_c\rightarrow 0$ in $L^t(\R^2)$ for any $t\in [2,+\infty)$, we conclude that for any $r>1$,
$$
\limsup_{c\rightarrow 0^+} \|\Delta u_c\|_r=0.
$$
Notice that $u_c$ is also a strong solution of problem \eqref{20230228e1} by standard bootstrap arguments. Furthermore, if $r>1$ the $L^r$-norm of the Laplacian, is equivalent to the full seminorm $\sum_{i,j}\|\partial_{i,j}u\|_2^2$, so that
$$
\|u_c\|_{W^{2,r}(\R^2)}\leq C(r) \left(\|\Delta u_c\|_r+\|u_c\|_{H^1(\R^2)}\right)\rightarrow 0, ~\hbox{as}~c\rightarrow 0^+.
$$
Since $W^{2,r}(\R^2)\hookrightarrow L^\infty(\R^2)$, we conclude that
$$
\|u_c\|_\infty \leq C(r)\|u_c\|_{W^{2,r}(\R^2)}\rightarrow 0, ~\hbox{as}~c\rightarrow 0^+.
$$
\ep

\br\lab{remark:20230426-xr1}
Under our assumptions $\mathbf{(f_1)}$-$\mathbf{(f_3)}$, by Lemma \ref{lemma:20230426-zl1}, $\lambda_c\rightarrow 0$ as $c\to 0^+$, and by Lemma \ref{lemma:20230814-bl1}, $\|u_c\|_\infty=u_c(0)\rightarrow 0$ as $c\rightarrow 0^+$. Applying the same arguments as in  \cite[Lemma 4.2, Lemma 4.3]{Jeanjean2024}, up to  replacing $\mathbf{(f_1)}$ by $\mathbf{(f'_1)}$, one can  also prove that there exist $C_1,C_2>0$ such that $C_1\|u_c\|_{\infty}^{p-2}\leq \lambda_c \leq C_2\|u_c\|_{\infty}^{p-2}$, see \cite[Lemmas 4.2 and Lemma 4.3]{Jeanjean2024}.
\er

\br\lab{remark:20240628-br1}
Provided $\lambda_0$ is sufficiently small, one has the following facts:
\begin{itemize}
\item[(1)] The fixed frequency problem \eqref{20230228e1} has a unique positive solution, which is denoted by $u_\lambda$, $\forall \lambda\in (0,\lambda_0)$, see \cite[Theorem 5.1]{Jeanjean2024};
\item[(2)] $\{(\lambda, u_\lambda): \lambda\in (0,\lambda_0)\}$ is an analytic curve in $\R\times H^1(\R^N)$. Indeed, by \cite[Theorem 4.1]{Jeanjean2024}, $v_\lambda(x):=\lambda^{\frac{1}{2-p}}u_\lambda(\frac{x}{\sqrt{\lambda}})\rightarrow U$ in $H^1(\R^2)$ as $\lambda\rightarrow 0^+$, where $U$ is the unique positive radial solution to equation \eqref{eq:20230524-e1}.
    Thus, by the non-degeneracy of $U$, we obtain the non-degeneracy of $u_\lambda$ for $\lambda>0$ small enough. Then by the implicit function theorem, we can deduce the analytical property;
\item[(3)] By (2) above, $u_\lambda$ is differentiable respect to $\lambda\in (0, \lambda_0)$;
\item[(4)] Define $c_\lambda:=\|u_\lambda\|_2$, then $c_\lambda$ is differentiable with respect to $\lambda\in (0, \lambda_0)$;
\item[(5)]By \cite[Theorem 5.1 and Proposition 6.1]{Jeanjean2024}, $u_\lambda$ is the least action solution, which is also the mountain pass solution;
\item[(6)] Define $I_\lambda(u):=I(u)+\frac{1}{2}\lambda \|u\|_2^2$, then $M(\lambda):=I_\lambda(u_\lambda)$ is the mountain pass level for the fixed frequency problem \eqref{20230228e1} provided $\lambda\in (0,\lambda_0)$;
\item[(7)] $M(\lambda)$ increases strictly with respect to $\lambda\in \R^+$, and it is differentiable at $(0,\lambda_0)$;
\item[(8)] There exists $\gamma>2$ such that $f(u_\lambda(x))u_\lambda(x)>\gamma F(u_\lambda(x)), \forall (\lambda,x)\in (0,\lambda_0)\times \R^N$, see the proof of \cite[Lemma 6.2]{Jeanjean2024}.
\end{itemize}
\er

\bl\lab{lemma:20240628-l1}
 There exists $\lambda_1>0$ such that $c_\lambda$ increases with respect to $\lambda\in (0,\lambda_1)$.
\el
\begin{proof}
Now, for any given $\lambda\in (0,\lambda_0)$, there exists some $\eta_\lambda>0$ small enough such that $(\lambda-\eta_\lambda, \lambda+\eta_\lambda)\subset (0,\lambda_0)$. Then, for any $\tau\in(\lambda-\eta_\lambda, \lambda+\eta_\lambda)$, one can show that there exists an unique $s=s(\tau)>0$ such that
$su_\lambda\in \mathcal{N}_{\tau}$, the so-called Nehari manifold corresponding to the fixed frequency problem \eqref{20230228e1} with $\lambda=\tau$, defined by
$$\mathcal{N}_{\tau}:=\{u\in H^1(\R^2)\backslash\{0\}: \|\nabla u\|_2^2+\tau \|u\|_2^2-\int_{\R^2}f(u)u\mathrm{d}x=0\}.$$
Precisely, $s=s(\tau)$ is determined by
$$\|\nabla u_\lambda\|_2^2 s+\tau \|u_\lambda\|_2^2 s-\int_{\R^2}f(su_\lambda)u_\lambda \mathrm{d}x=0,$$
which implies that $s=s(\tau)$ is differentiable with respect to $\tau\in (\lambda-\eta_\lambda, \lambda+\eta_\lambda)$.

\noindent By $su_\lambda\in \mathcal{N}_{\tau}$,
\beq\lab{eq:20240617-e2}
M(\tau)=\inf_{u\in \mathcal{N}_\tau}I_\tau(u)\leq I_{\tau}(su_\lambda)=:h(\tau).
\eeq
Moreover, for any $\tau\in (\lambda-\eta_\lambda, \lambda)$,
$$\frac{\mathrm{d}}{\mathrm{d}t}I_{\tau}(tu_\lambda)\Big|_{t=1}
=\frac{\mathrm{d}}{\mathrm{d}t}I_{\lambda}(tu_\lambda)\Big|_{t=1}-\frac{\mathrm{d}}{\mathrm{d}t}\left\{\frac{1}{2}(\lambda-\tau)\|u_\lambda\|_2^2 t^2\right\}\Big|_{t=1}
=-(\lambda-\tau)\|u_\lambda\|_2^2<0.$$
Hence $s(\tau)<1$ and $s\rightarrow 1$, as $\tau\uparrow \lambda$.

\noindent By a direct computation, denote $s'(\tau)$ by $s'$, we have that
\begin{align*}
&\frac{\mathrm{d}}{\mathrm{d}\tau}h(\tau)
=\frac{\mathrm{d}}{\mathrm{d}\tau}I_{\tau}(su_\lambda)
=\frac{\mathrm{d}}{\mathrm{d}\tau}\left\{\frac{1}{2}[\|\nabla u_\lambda\|_2^2 +\tau \|u_\lambda\|_2^2]s^2-\int_{\R^2}F(su_\lambda) \mathrm{d}x\right\}\\
=&[\|\nabla u_\lambda\|_2^2 +\tau \|u_\lambda\|_2^2] s s' +\frac{1}{2}\|u_\lambda\|_2^2 s^2-\int_{\R^2}f(su_\lambda)u_\lambda s' \mathrm{d}x.
\end{align*}
So,
\begin{align*}
&\frac{\mathrm{d}}{\mathrm{d}\tau}h(\tau)\Big|_{\tau=\lambda}
=\frac{1}{2}\|u_\lambda\|_2^2+[\|\nabla u_\lambda\|_2^2 +\lambda \|u_\lambda\|_2^2-\int_{\R^2}f(u_\lambda)u_\lambda \mathrm{d}x]s'
=\frac{1}{2}\|u_\lambda\|_2^2>0.
\end{align*}
Hence, for $\eta_\lambda>0$ sufficiently small, combining the above formula with \eqref{eq:20240617-e2},we have that
$$M(\tau)\leq h(\tau)<h(\lambda)=M(\lambda), \forall~ \tau\in (\lambda-\eta_\lambda, \lambda).$$
Thus
\begin{align*}
M'(\lambda)=&\lim_{\tau\uparrow \lambda}\frac{M(\lambda)-M(\tau)}{\lambda-\tau}
\geq\lim_{\tau\uparrow \lambda}\frac{h(\lambda)-h(\tau)}{\lambda-\tau}
=h'(\lambda)=\frac{1}{2}\|u_\lambda\|_2^2.
\end{align*}
Now, we argue by contradiction.
Suppose that there is no $\lambda_1>0$ such that $c_\lambda$ increases with respect to $\lambda\in(0,\lambda_1)$. Then there exists sequences $c_n\downarrow 0$ and $0<\lambda_{1,n}<\lambda_{2,n}\downarrow 0$ such that
    $I(u_{\lambda_{1,n}})=I(u_{\lambda_{2,n}})=m_{c_n}$
and
\beq\lab{eq:20240615bu-e2}
\|u_\lambda\|_2>c_n, \forall~ \lambda\in (\lambda_{1,n},\lambda_{2,n}).
\eeq
However, by
$$
M(\lambda_{2,n})-M(\lambda_{1,n})=m_{c_n}+\frac{1}{2}\lambda_{2,n}c_n^2-m_{c_n}-\frac{1}{2}\lambda_{1,n}c_n^2=\frac{c_n^2}{2}(\lambda_{2,n}-\lambda_{1,n}),
$$
we can find some $\lambda_n\in (\lambda_{1,n},\lambda_{2,n})$ such that
$$\frac{1}{2}\|u_{\lambda_n}\|_2^2\leq M'(\lambda_n)=\frac{M(\lambda_{2,n})-M(\lambda_{1,n})}{(\lambda_{2,n}-\lambda_{1,n})}
=\frac{1}{2}c_n^2.$$
That is, $\|u_{\lambda_n}\|_2\leq c_n$ which contradicts \eqref{eq:20240615bu-e2}.
\end{proof}

\br\lab{remark:20240628-r1}
Let $c_0>0$ be sufficiently small and define
\beq\lab{eq:20240628-e1}
\underline{\lambda}_c:=\min\{\lambda\in (0,\lambda_1):c_\lambda=c\},~\overline{\lambda}_c:=\max\{\lambda\in (0,\lambda_1):c_\lambda=c\}.
\eeq
We stress that $\underline{\lambda}_c$ and $\overline{\lambda}_c$ are well defined due to Lemma \ref{lemma:20240628-l1}. Set
\beq\lab{eq:20240628-e2}
\lambda_c:=\frac{\underline{\lambda}_c+\overline{\lambda}_c}{2},
\eeq
then $\lambda_c$ is well defined for $c\in (0,c_0)$.
In particular, let
$$\Lambda_c:=\{\lambda\in (0,\lambda_1): \|u_\lambda\|_2=c\},$$
then again by Lemma \ref{lemma:20240628-l1}, one can see that $\Lambda_c:=[\underline{\lambda}_c, \overline{\lambda}_c]$.
We remark that $\lambda_c$ increases strictly with respect to $c\in (0,c_0)$. Thus, $\lambda_c$ is differentiable with respect to almost every $c\in (0,c_0)$, which implies that
$$\Lambda_c=\{\lambda_c\}=\{\underline{\lambda}_c\}=\{\overline{\lambda}_c\}$$
is a single point set for a.e. $c\in (0,c_0)$.
\er

\vskip 0.2in
\noindent
{\bf Proof of Theorem \ref{th:20230426-xt1}.} By Remark \ref{remark:20240628-r1}, for almost every $c\in (0,c_0)$, $\Lambda_c=\{\lambda_c\}$ is a single point set. On the other hand, by Lemma \ref{lemma:20230426-zl1}, $\lambda_c\in (0,\lambda_0)$ if $c_0$ is chosen suitably small. Then, combining this with \cite[Lemma 6.2]{Jeanjean2024}, we obtain that for almost every $c\in (0,c_0)$, $u_c$ is the unique normalized ground state.
The conclusion (i) is proved.

\noindent The conclusion (ii) follows from \cite[Theorem 4.1]{Jeanjean2024}, see also Remark \ref{remark:20240628-br1}-(2) above.
\hfill$\Box$

\br\lab{remark:20240628-br2}
Recalling Lemma \ref{lemma:20230426-l1}-(ii), $m_c$ decreases strictly with respect to $c\in (0,c_0)$. For the homogeneous case $f(s)=\mu s^{p-1},s>0$,  by a direct computation, one can deduce that
$\lambda_c=\frac{4m_c}{(p-4)c^2}$, which is uniquely determined by $c$. So, it will be an interesting question whether the conclusion (i) in Theorem \ref{th:20230426-xt1} can be strengthen to all $c\in (0,c_0)$ for the general nonlinearity $f$.
\er

\section{Normalized mountain pass solution}\lab{sec:proofs-2}
\noindent The goal of this Section is to prove the existence of a second couple $(\bar u_c, \bar \lambda_c)$ which is a solution to problem \eqref{20230228e1}-\eqref{20230228e2} of mountain pass type.  \\
The local minimizer $u_c \in S_{r,c}$ obtained in Theorem \ref{theorem 1.1} can be characterized by
$$
I(u_c)=\inf _{u \in V_c} I(u)=m_c<0.
$$

\noindent Let
\begin{align}\label{20230307e2}
M_c:=\inf _{\gamma \in \Gamma_c} \max _{t \in[0, \infty)} I(\gamma(t)),
\end{align}
where
\begin{align*}
\Gamma_c:=\big\{\gamma \in C([0, \infty), S_{r,c}): \gamma(0)=u_c,~\exists~t_\gamma>0 \text { s.t. } I(\gamma(t))<2m_c, \forall~ t \geq t_\gamma\big\}.
\end{align*}

Following the idea introduced by Jeanjean in  \cite{Jeanjean1997} we consider the functional $\tilde{I}: H^{1}(\R^2)\times\R^+\to\R$ defined by
\beq\lab{eq:20230828-xe1}
\tilde{I}(u,s):=I(T(u,s))=\Psi_{u}(s)=\frac{s^2}{2}\|\nabla u\|_{2}^{2}-\frac{1}{s^2}\int_{\R^2}F(su)\mathrm{d}x.
\eeq
For any fixed $c>0$, we also define
$$
\widetilde{M}_c:=\inf _{\tilde{\gamma} \in \tilde{\Gamma}_c}\max\limits_{t \in[0, \infty)}\tilde{I}(\tilde{\gamma}(t)),
$$
where
$$
\tilde{\Gamma}_c:=\{\tilde{\gamma} \in C([0, \infty), S_{r,c}\times\mathbb{R}^{+}): \tilde{\gamma}(0)=(u_c,1),~\exists~t_{\tilde{\gamma}}>0 \text { s.t. } \tilde{I}(\tilde{\gamma}(t)) <2m_c,~\forall~t \geq t_{\tilde{\gamma}}\}.
$$

\subsection{Mountain pass geometry}\lab{sec:20230307-1}

Recalling the definition of $M_c$ given in \eqref{20230307e2}, we have
\begin{lemma}\label{20230307-lemma-3}
For any $c \in(0, c_0)$,~$\tilde{I}$ has a mountain pass geometry at the level $\widetilde{M}_c$ and $M_c=\widetilde{M}_c\geq \tau_0$, where $\tau_0>0$ is given by Lemma \ref{20230301-lemma1}.
\end{lemma}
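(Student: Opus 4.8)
The plan is to first establish the identity $M_c=\widetilde{M}_c$ by exhibiting a value-preserving correspondence between the two families of admissible paths, then to lower-bound $M_c$ by $\tau_0$ through a crossing argument on the sphere $S_{r,c}$; the mountain pass geometry for $\tilde{I}$ follows at once from $\widetilde{M}_c=M_c\geq\tau_0>0>m_c$. Before anything else I would record that $\Gamma_c\neq\emptyset$: scaling $u_c$ along the fiber, i.e. taking $\gamma(t)=T(u_c,\phi(t))$ with $\phi$ continuous, increasing, $\phi(0)=1$ and $\phi(t)\to\infty$, produces a path in $S_{r,c}$ with $I(\gamma(t))=\Psi_{u_c}(\phi(t))\to-\infty$ as $t\to\infty$ (since $f$ has exponential critical growth at infinity), so $I(\gamma(t))<2m_c$ eventually. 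Consequently $\tilde{\Gamma}_c\neq\emptyset$ as well, and since each admissible path has finite maximal value, both min-max levels are finite.

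For the identity $M_c=\widetilde{M}_c$ I would use that $T$ preserves the $L^2$-norm and radial symmetry and is jointly continuous on $H^1(\R^2)\times\R^+$. Given $\gamma\in\Gamma_c$, the lifted path $\tilde{\gamma}(t):=(\gamma(t),1)$ lies in $\tilde{\Gamma}_c$ and satisfies $\tilde{I}(\tilde{\gamma}(t))=I(T(\gamma(t),1))=I(\gamma(t))$, so $\max_t\tilde{I}(\tilde{\gamma})=\max_t I(\gamma)$ and therefore $\widetilde{M}_c\leq M_c$. Conversely, given $\tilde{\gamma}=(w,s)\in\tilde{\Gamma}_c$, the projected path $\gamma(t):=T(w(t),s(t))$ is continuous into $S_{r,c}$, starts at $T(u_c,1)=u_c$, and obeys $I(\gamma(t))=\tilde{I}(w(t),s(t))$, so it belongs to $\Gamma_c$ with the same maximal value, giving $M_c\leq\widetilde{M}_c$. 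Combining the two inequalities yields $M_c=\widetilde{M}_c$.

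It then remains to show $M_c\geq\tau_0$. Fix $\gamma\in\Gamma_c$. Since $m_c$ is attained at $u_c$, Remark \ref{remark:20230425-r1} gives $\|\nabla u_c\|_2^2<\rho^*$, so $\gamma(0)=u_c\in V_c$. For $t\geq t_\gamma$ one has $I(\gamma(t))<2m_c$; because $I\geq m_c>2m_c$ on $V_c$ and $I\geq\tau_0>0>2m_c$ on $\partial V_c$ by Lemma \ref{20230301-lemma1}, such $\gamma(t)$ can lie neither in $V_c$ nor on $\partial V_c$, whence $\|\nabla\gamma(t)\|_2^2>\rho^*$. Thus the continuous map $t\mapsto\|\nabla\gamma(t)\|_2^2$ passes from a value below $\rho^*$ at $t=0$ to a value above $\rho^*$ at $t=t_\gamma$, and by the intermediate value theorem there is $t^*\in(0,t_\gamma)$ with $\gamma(t^*)\in\partial V_c$. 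Hence $\max_t I(\gamma(t))\geq I(\gamma(t^*))\geq\inf_{\partial V_c}I\geq\tau_0$, and taking the infimum over $\gamma\in\Gamma_c$ gives $M_c\geq\tau_0$. Since $\widetilde{M}_c=M_c\geq\tau_0>0>m_c=\tilde{I}(u_c,1)$ while every admissible path terminates in the region $\{\tilde{I}<2m_c\}$, the level $\widetilde{M}_c$ strictly exceeds the values at both anchors of each path, which is precisely the asserted mountain pass geometry for $\tilde{I}$.

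The routine steps are the continuity of the rescaled paths and the value identity $I(T(u,s))=\tilde{I}(u,s)$; the only point requiring care is the joint continuity of $T$ on $H^1(\R^2)\times\R^+$, which guarantees that $t\mapsto T(w(t),s(t))$ is continuous in $H^1$ (here one uses that $s(t)$ is continuous and stays positive, hence is bounded and bounded away from $0$ on each compact time interval). The genuinely substantive step is the crossing argument: it is what converts the topological separation encoded in Lemma \ref{20230301-lemma1} into the quantitative lower bound $M_c\geq\tau_0$, and so it is the heart of the proof.
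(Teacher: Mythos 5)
Your proposal is correct and follows essentially the same route as the paper: the value-preserving lift/projection correspondence between $\Gamma_c$ and $\tilde{\Gamma}_c$ gives $M_c=\widetilde{M}_c$, and the crossing of $\partial V_c$ forced by $I(\gamma(t))<2m_c<m_c$ together with \eqref{20230228e3} gives $M_c\geq\tau_0$. The extra details you supply (non-emptiness of $\Gamma_c$ via the fiber scaling and the joint continuity of $T$) are points the paper treats implicitly, so there is nothing to add.
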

\begin{proof}
Let $\gamma \in \Gamma_c$. Since $\tilde{\gamma}(t)=(\gamma(t),1) \in \tilde{\Gamma}_c$ and $\tilde{I}(\tilde{\gamma}(t))=I(\gamma(t))$ for all $t \in [0,+\infty)$, we have that $M_c \geq \widetilde{M}_c$.
 Conversely, for any $\tilde{\gamma}(t)=(v(t),s(t)) \in \tilde{\Gamma}_c$, we have that $v(0)=u_c,~s(0)=1$ and there exists a $t_{\tilde{\gamma}}>0$ such that $\tilde{I}(\tilde{\gamma}(t))<2m_c$ for all $t\geq t_{\tilde{\gamma}}$. Set $\gamma(t)=T(v(t),s(t))$. Then, $\gamma$ is continuous from $[0,+\infty)$ into $S_{r,c}$ and
$$
\gamma(0)=T(v(0),s(0))=v(0)=u_c,
$$
$$
I(\gamma(t))=I(T(v(t),s(t)))=\tilde{I}(v(t),s(t))=\tilde{I}(\tilde{\gamma}(t)) <2m_c,\quad\forall~t \geq t_{\tilde{\gamma}}.
$$
Hence, $\gamma \in \Gamma_c$ and $\tilde{I}(\tilde{\gamma}(t))=I(T(v(t),s(t)))=I(\gamma(t))$. Thus, $\widetilde{M}_c \geq M_c$. Hence, we finally conclude that $M_c=\widetilde{M}_c$.

\noindent Next, let us prove that $M_c\geq \tau_0$ for all $c \in(0, c_0)$.
Indeed, let $\gamma \in \Gamma_c$ be arbitrary and $\gamma(0)=u_c \in V_c$. Now, for $t>0$ sufficiently large, since $I(\gamma(t))<2 m_c$, necessarily in view of \eqref{20230228e3}, $\gamma(t)\notin V_c$. By continuity of $\gamma$ there exists a $t_0>0$ such that $\gamma(t_0) \in \partial V_c$ and using again \eqref{20230228e3} we conclude.

\noindent At this point observing that
$$
\max\{\tilde{I}(\tilde{\gamma}(0)), \tilde{I}(\tilde{\gamma}(t_\gamma))\}=\max\{I(\gamma(0)), I(\gamma(t_\gamma))\}<0,
$$
it follows that $\tilde{I}$ has a mountain pass geometry at level $\widetilde{M}_c$ for all $0<c<c_0$.
\end{proof}

\medskip
\noindent We adopt the following definition from  \cite[Definition 2.21]{Willem1996}.
\begin{definition}\lab{def:20230215-d1}
Let $X$ be a Banach space and $\Psi\in C^1(X,\R)$,~$d\in \R$,$~\mathcal{P}\in C(X,\R)$. A sequence $\{u_n\}\subset X$ is called a $(PSP)_d$ sequence for $\Psi$ if
$$
\Psi(u_n)\rightarrow d,~\Psi'(u_n)\rightarrow 0,~\mathcal{P}(u_n)\to 0~\hbox{as}~n\rightarrow \infty.
$$
The functional $\Psi$ satisfies the $(PSP)_d$ condition if any $(PSP)_d$ sequence $\{u_n\}\subset X$ has a convergent subsequence.
\end{definition}

\noindent In dimension two, applying arguments similar to  \cite[Proposition 2.2 and Lemma 2.4]{Jeanjean1997}, one can  prove, starting from Lemma \ref{20230307-lemma-3}, the following result
\begin{lemma}\label{lemma 3.3}
There exists $\{\bar{u}_n\}\subset S_{r,c}$ such that
$$
I(\bar{u}_n)\to M_c,\quad \left.I\right|_{S_{r,c}}^{\prime}(\bar{u}_n)\to 0,\quad\mathcal{P}(\bar{u}_n)\to 0~~\hbox{as}~~n\to \infty.
$$
\end{lemma}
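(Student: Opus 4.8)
The plan is to follow the augmented-functional scheme of Jeanjean \cite{Jeanjean1997}, working on the product $S_{r,c}\times\R^+$ with the fiber map $\tilde I$ from \eqref{eq:20230828-xe1}. The starting point is the identity
\[
\mathcal{P}(T(u,s))=s\,\partial_s\tilde I(u,s),\qquad (u,s)\in S_{r,c}\times\R^+,
\]
obtained by differentiating \eqref{eq:20230828-xe1} in $s$ and comparing with \eqref{Pmanifold} after the change of variables $y=sx$; it says that the Pohozaev quantity of a scaled function equals $s$ times the $s$-derivative of $\tilde I$. Thus, controlling $\partial_s\tilde I$ along a sequence automatically controls $\mathcal{P}$ after descaling. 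Combined with the equality $M_c=\widetilde M_c$ and the mountain pass geometry of $\tilde I$ already established in Lemma \ref{20230307-lemma-3}, this reduces the problem to producing an ordinary Palais--Smale sequence for $\tilde I$ at the level $\widetilde M_c$.

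To this end I would apply the general minimax principle of \cite[Proposition 2.2]{Jeanjean1997} (an Ekeland-type argument on the complete metric space of paths $\tilde\Gamma_c$) to $\tilde I$. This yields a sequence $(w_n,s_n)\in S_{r,c}\times\R^+$ with
\[
\tilde I(w_n,s_n)\to\widetilde M_c,\qquad \partial_s\tilde I(w_n,s_n)\to 0,\qquad \big\|\partial_u\tilde I(w_n,s_n)\big|_{T_{w_n}S_{r,c}}\big\|\to 0.
\]
Crucially, since $M_c=\widetilde M_c$ allows the competitor paths to be taken in the slice $\{s=1\}$, that is $\tilde\gamma=(\gamma,1)$ with $\gamma\in\Gamma_c$, the localization built into the minimax principle forces $(w_n,s_n)$ to lie near the maxima of such near-optimal paths, whence $s_n\to 1$; in particular $\{s_n\}$ is bounded and bounded away from $0$.

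Setting $\bar u_n:=T(w_n,s_n)$, which again lies in $S_{r,c}$ because $T(\cdot,s)$ preserves the $L^2$-norm, I would then verify the three required properties. The energy level is immediate: $I(\bar u_n)=\tilde I(w_n,s_n)\to M_c$. The Pohozaev condition follows from the identity above and the boundedness of $s_n$, namely $\mathcal{P}(\bar u_n)=s_n\,\partial_s\tilde I(w_n,s_n)\to 0$. For the constrained derivative, the chain rule gives $\partial_u\tilde I(w_n,s_n)[\phi]=\left.I\right|_{S_{r,c}}'(\bar u_n)[T(\phi,s_n)]$ for $\phi\in T_{w_n}S_{r,c}$, and one checks that $\phi\mapsto T(\phi,s_n)$ is a linear isomorphism of $T_{w_n}S_{r,c}$ onto $T_{\bar u_n}S_{r,c}$ (it preserves the orthogonality $\int_{\R^2} \bar u_n\,T(\phi,s_n)=\int_{\R^2} w_n\phi=0$), with $\|T(\phi,s_n)\|$ comparable to $\|\phi\|$ once $s_n$ is bounded above and below; hence $\left.I\right|_{S_{r,c}}'(\bar u_n)\to 0$.

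The main obstacle is precisely the control of the scaling parameter $\{s_n\}$: both the transfer of the constrained derivative and the vanishing of $\mathcal{P}(\bar u_n)$ degenerate if $s_n\to 0$ or $s_n\to\infty$. I would address this by combining the slice choice $s\equiv 1$ for near-optimal paths with the deformation and localization in \cite[Proposition 2.2]{Jeanjean1997}; the mountain pass geometry of Lemma \ref{20230307-lemma-3}, inherited from the local-minimum structure of Lemma \ref{20230301-lemma1}, keeps the near-optimal paths within a fixed energy window and thereby pins $s_n$. A secondary, routine check is that $\tilde\Gamma_c$ is a complete metric space under the natural path metric, so that Ekeland's variational principle applies.
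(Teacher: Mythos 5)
Your proposal is correct and follows essentially the same route as the paper, which proves this lemma precisely by invoking the augmented functional $\tilde I$ of \eqref{eq:20230828-xe1} together with Jeanjean's general minimax principle and descaling argument (\cite[Proposition 2.2 and Lemma 2.4]{Jeanjean1997}), starting from the identity $M_c=\widetilde M_c$ of Lemma \ref{20230307-lemma-3}. Your reconstruction of the key points --- the identity $\mathcal{P}(T(u,s))=s\,\partial_s\tilde I(u,s)$, the localization near slice paths $\{s=1\}$ forcing $s_n\to 1$, and the transfer of the constrained derivative via the isomorphism $\phi\mapsto T(\phi,s_n)$ of tangent spaces --- is exactly what that citation supplies.
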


\subsection{Estimate of the mountain pass level}\lab{subsec:20230331}
In order to get a more precise information about the min-max level obtained by the Mountain Pass Theorem, we consider the Moser sequence of nonnegative functions
$$
\widetilde{m}_n(x)=\frac{1}{\sqrt{2\pi}}\left\{\begin{array}{lll}
(\log n)^{\frac{1}{2}}, & \text { if } & 0\leq|x| \leq \frac{1}{n}, \\
 \frac{\log\frac{1}{|x|}}{(\log n)^{\frac{1}{2}}}, & \text { if } & \frac{1}{n} \leq|x| \leq 1, \\
0, & \text { if } & |x| \geq 1 .
\end{array}\right.
$$
One has $\widetilde{m}_n \in H^1(\mathbb{R}^2)$, the support of $\widetilde{m}_n(x)$ is $B_1(0)$ and $\|\nabla \widetilde{m}_n\|_{2}^{2}=1$. By straightforward calculations,
\begin{align}\label{20230330e12}
\|\widetilde{m}_n\|_{2}^{2}&=\frac{\log n}{2n^2}+\frac{1}{\log n}\int_{\frac{1}{n}}^{1}\rho\log^2\frac{1}{\rho}\mathrm{d}\rho\nonumber\\
&=\frac{\log n}{2n^2}+\frac{1}{\log n}\big(\frac{1}{4}-\frac{1}{4n^2}-\frac{\log n}{2n^2}-\frac{\log^2 n}{2n^2}\big)\\
&=\frac{1}{4\log n}+{\rm{o}}(\frac{1}{\log n})\nonumber.
\end{align}
For any $n$ and $t$, let $\tau_n(t)$ be such that $u_c(\tau_n x)+t\widetilde{m}_n(\tau_n x)\in S_{r,c}$, that is $\|u_c(\tau_n x)+t\widetilde{m}_n(\tau_n x)\|_{2}^{2}=\tau_{n}^{-2}\|u_c+t\widetilde{m}_n\|_{2}^{2}=c^2$, which implies
\begin{equation}\label{tau_n}
\tau_{n}^{2}(t)=\frac{\|u_c+t\widetilde{m}_n\|_{2}^{2}}{c^2}= 1+\frac{t^{2}}{c^2}\|\widetilde{m}_n\|_{2}^{2}+\frac{2t}{c^2}\int_{\R^2}u_c\widetilde{m}_n\mathrm{d}x.
\end{equation}
By H\"{o}lder's inequality
\begin{align}\label{20230330e9}
&0\leq\Big|\int_{\R^2}u_c\widetilde{m}_n\mathrm{d}x\Big|\leq\int_{\R^2}\big|u_c\widetilde{m}_n\big|\mathrm{d}x\leq\big(\int_{B_1}|u_c|^2\mathrm{d}x\big)^{\frac{1}{2}}\big(\int_{B_1}|\widetilde{m}_n|^2\mathrm{d}x\big)^{\frac{1}{2}}\nonumber\\
&\leq \|u_c\|_{\infty}\sqrt \pi\big(\frac{1}{4\log n}-\frac{1}{2n^2}-\frac{1}{4n^2\log n}\big)^{\frac{1}{2}}\\
&=\frac{\|u_c\|_{\infty}\sqrt \pi }{2\sqrt{\log n}}+{\rm{o}}\big(\frac{1}{\log n}\big)\nonumber,
\end{align}
so that, by \eqref{20230330e9}
$$0\leq \int_{\R^2}u_c\widetilde{m}_n\mathrm{d}x= {\rm{O}}\left(\frac{1}{\sqrt{\log n}}\right).$$
Let us now consider the $L^2$-norm of the gradient of the function $u_c(\tau_n x)+t\widetilde{m}_n(\tau_n x)$:
$$
\|\nabla\big(u_c(\tau_n x)+t\widetilde{m}_n(\tau_n x)\big)\|_{2}^{2}=\|\nabla(u_c+t\widetilde{m}_n)\|_{2}^{2}=\|\nabla u_c\|_{2}^{2}+t^2+2t\int_{\mathbb{R}^2}\nabla u_c\nabla\widetilde{m}_n\mathrm{d}x,
$$
where, recalling that $u_c$ is a positive and radially decreasing function,
\begin{align}\label{20230330e11}
0\leq&\int_{\mathbb{R}^2}\nabla u_c\nabla\widetilde{m}_n\mathrm{d}x=\int_{\frac{1}{n}\leq|x|\leq 1}\nabla u_c\nabla\widetilde{m}_n\mathrm{d}x=\frac{\sqrt{2\pi}}{\sqrt{\log n}}\int_{\frac{1}{n}}^1u_c^{\prime}(\rho)\big(\log \frac{1}{\rho}\big)^\prime\rho\mathrm{d}\rho\nonumber\\
&=-\frac{\sqrt{2\pi}\big[u_c(1)-u_c(\frac 1n)\big]}{\sqrt{\log n}}\leq \frac{\sqrt{2\pi}\|u_c\|_{\infty}}{\sqrt{\log n}}.
\end{align}
Let $\gamma_n(t):=u_c(\tau_n x)+t\widetilde{m}_n(\tau_n x)\in S_{r,c}$ due to the definition \eqref{tau_n} of $\tau_n(t)$. For $n$ fixed, it holds that $\gamma_n(0)=u_c$ and
\begin{align}\label{20230330e1}
g_n(t):&=I(\gamma_n(t))=I(u_c(\tau_nx)+t\widetilde{m}_n(\tau_n x))\notag\\
&=\frac{1}{2}\|\nabla\big(u_c(\tau_n x)+t\widetilde{m}_n(\tau_n x)\big)\|_{2}^{2}-\int_{\mathbb{R}^2}F\big(u_c(\tau_n x)+t\widetilde{m}_n(\tau_n x)\big)\mathrm{d}x\notag\\
&=\frac{1}{2}\|\nabla(u_c+t\widetilde{m}_n)\|_{2}^{2}-\frac{1}{\tau_n^2}\int_{\R^2}F\big(u_c+t\widetilde{m}_n\big)\mathrm{d}x \nonumber\\
&=\frac{1}{2}\|\nabla u_c\|_{2}^{2}+\frac{t^{2}}{2}+t\int_{\mathbb{R}^2}\nabla u_c\nabla\widetilde{m}_n\mathrm{d}x-\frac{1}{\tau_n^2}\int_{\R^2}F\big(u_c+t\widetilde{m}_n\big)\mathrm{d}x,
\end{align}
so that $g_n(t)<2m_c$ for $t>0$ large enough and, as a consequence, $\gamma_n(t)\in\Gamma_c$.
Arguing as in  the proof of Lemma \ref{20230307-lemma-3}, we have that
$\max\limits_{t\geq 0}g_n(t)>0$ is attained at some $t_n>0$, for any fixed $n\in\N$.
Furthermore,
\begin{align}\label{20230330e2}
M_c\leq\max\limits_{t\geq 0}g_{n}(t)=\max\limits_{t\geq 0}I(\gamma_n(t)).
\end{align}
The following technical Lemma will be used in the proof of the estimate of the  mountain pass level.
\begin{lemma}\label{20230330-Lemma-1}
For any  $\delta>0$,~$t\in[0,T]$ there exists $R_\delta>0$ such that for any $s\geq R_\delta$,
$$
F(t+s)-F(t)\geq\Big(\frac{\beta_0-\delta}{2\alpha_0}\Big)\frac{e^{\alpha_0(t+s)^2}}{s^2}.
$$
\end{lemma}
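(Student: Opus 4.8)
The plan is to reduce $F(t+s)-F(t)$ to the integral of $f$ over a short window at the right endpoint and then feed in the FMR lower bound $\mathbf{(f_4)}$. First I would note that, since $F'=f$,
\[
F(t+s)-F(t)=\int_t^{t+s}f(r)\,\mathrm{d}r .
\]
Because $f\geq 0$ on $\R$ (by $\mathbf{(f_1)}$ together with the convention $f\equiv 0$ on $(-\infty,0]$) and $t\geq 0$, I may discard all but a fixed-length window at the top: setting $W:=t+s$, for $s\geq 1$ one has
\[
F(t+s)-F(t)\geq \int_{W-1}^{W}f(r)\,\mathrm{d}r .
\]

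Next I would fix an auxiliary $\delta'\in(0,\delta)$. (If $\delta\geq\beta_0$ the claimed inequality is trivial, its right-hand side being nonpositive while the left-hand side is nonnegative; so I assume $\delta<\beta_0$, whence $\beta_0-\delta'>0$.) By $\mathbf{(f_4)}$ there is $R'>0$ with $f(r)\geq(\beta_0-\delta')e^{\alpha_0 r^2}/r$ for all $r\geq R'$, and for $s\geq R'+1$ the whole window $[W-1,W]$ lies in $\{r\geq R'\}$. The key algebraic step is to write $e^{\alpha_0 r^2}/r=(2\alpha_0 r^2)^{-1}\cdot 2\alpha_0 r\,e^{\alpha_0 r^2}$, bound $r\leq W$ in the denominator, and recognize $2\alpha_0 r\,e^{\alpha_0 r^2}=\frac{\mathrm{d}}{\mathrm{d}r}e^{\alpha_0 r^2}$, so that the integral collapses to
\[
\int_{W-1}^{W}f(r)\,\mathrm{d}r\geq\frac{\beta_0-\delta'}{2\alpha_0 W^2}\Bigl(e^{\alpha_0 W^2}-e^{\alpha_0(W-1)^2}\Bigr)=\frac{\beta_0-\delta'}{2\alpha_0}\,\frac{e^{\alpha_0 W^2}}{W^2}\Bigl(1-e^{-\alpha_0(2W-1)}\Bigr).
\]

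Finally I would match this against the target, whose only discrepancies are the denominator $W^2=(t+s)^2$ versus the required $s^2$ and the correction factor $1-e^{-\alpha_0(2W-1)}$. Letting $s\to+\infty$ and using, uniformly for $t\in[0,T]$, the bounds $s^2/(t+s)^2\geq s^2/(T+s)^2\to 1$ and $1-e^{-\alpha_0(2W-1)}\geq 1-e^{-\alpha_0(2s-1)}\to 1$, one gets
\[
(\beta_0-\delta')\,\frac{s^2}{(t+s)^2}\Bigl(1-e^{-\alpha_0(2(t+s)-1)}\Bigr)\longrightarrow \beta_0-\delta'>\beta_0-\delta ,
\]
so there is $R_\delta\geq\max\{R'+1,1\}$ for which the left-hand side exceeds $\beta_0-\delta$ whenever $s\geq R_\delta$ and $t\in[0,T]$. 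Substituting $W=t+s$ back and rearranging yields exactly the asserted bound.

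The main obstacle — indeed essentially the only subtle point — is the mismatch between the natural denominator $(t+s)^2$ produced by the estimate and the $s^2$ demanded by the statement. It is resolved by exploiting that $t$ ranges over the \emph{bounded} interval $[0,T]$, so that $s^2/(t+s)^2\to 1$ uniformly in $t$ and the resulting loss is absorbed into the slack between $\delta'$ and $\delta$; this is precisely why $R_\delta$ can be chosen uniformly in $t$. Everything else is the elementary Laplace-type estimate of $\int e^{\alpha_0 r^2}/r$ near its upper endpoint, made exact by the identity $2\alpha_0 r\,e^{\alpha_0 r^2}=(e^{\alpha_0 r^2})'$.
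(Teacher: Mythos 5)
Your proof is correct, and it takes a genuinely different (direct) route from the paper's. The paper argues by contradiction: assuming the bound fails along sequences $t_n\in[0,T]$, $s_n\to\infty$, it first transfers $\mathbf{(f_4)}$ to the primitive via a L'H\^opital-type step, obtaining $F(s)\geq \frac{\beta_0-\delta}{2\alpha_0}s^{-2}e^{\alpha_0 s^2}$ for large $s$; it then evaluates this at $s=t_n+s_n$, trades $(t_n+s_n)^{-2}$ for $s_n^{-2}$ exactly as you do (using that $t_n$ ranges over the bounded interval $[0,T]$), and concludes $F(t_n)\to+\infty$, contradicting $\sup_n F(t_n)\leq\max_{[0,T]}F<\infty$. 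You instead write the increment as $\int_t^{t+s}f$, localize to the unit window $[t+s-1,\,t+s]$, insert the pointwise bound $f(r)\geq(\beta_0-\delta')e^{\alpha_0 r^2}/r$ from $\mathbf{(f_4)}$, and integrate exactly via $(e^{\alpha_0 r^2})'=2\alpha_0 r e^{\alpha_0 r^2}$. Both arguments rest on the same two ingredients (the FMR condition and the boundedness of $t$, which is what makes $R_\delta$ uniform in $t$), but yours is constructive and quantitative, and it sidesteps the slightly delicate point in the paper that $\mathbf{(f_4)}$ only provides a $\liminf$, so the L'H\^opital step there really needs the one-sided version $\liminf F/g\geq\liminf F'/g'$. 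Your handling of the degenerate case $\delta\geq\beta_0$ and of the thresholds $s\geq\max\{R'+1,1\}$ is also complete.
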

\begin{proof}
Suppose by contradiction that there exists $\delta_0>0$,~$t_n\in[0,T]$ and $s_n\to\infty$ such that
$$
F(t_n+s_n)-F(t_n)<\Big(\frac{\beta_0-\delta_0}{2\alpha_0}\Big)\frac{e^{\alpha_0(t_n+s_n)^2}}{s_n^2},
$$
namely
\begin{align}\label{20230330e3}
F(t_n+s_n)<\Big(\frac{\beta_0-\delta_0}{2\alpha_0}\Big)\frac{e^{\alpha_0(t_n+s_n)^2}}{s_n^2}+F(t_n).
\end{align}
Using hypothesis $\mathbf{(f_4)}$, it holds
$$
\lim\limits_{s\to+\infty}\frac{F(s)}{s^{-2}e^{\alpha_0s^2}}=\lim\limits_{s\to+\infty}\frac{f(s)s}{2\alpha_0e^{\alpha_0s^2}}\geq\frac{\beta_0}{2\alpha_0}.
$$
Hence, for any $\delta>0$, there exists $R_\delta>0$ such that for $s\geq R_\delta$,
\begin{align*}
f(s)s\geq(\beta_0-\delta)e^{\alpha_0s^2},\quad F(s)s^2\geq\frac{\beta_0-\delta}{2\alpha_0}e^{\alpha_0s^2}
\end{align*}
and, in turns
\begin{align}\label{20230330e5}
F(t_n+s_n)\geq\Big(\frac{\beta_0-\frac{\delta_0}{2}}{2\alpha_0}\Big)\frac{e^{\alpha_0(t_n+s_n)^2}}{(t_n+s_n)^2}\geq\Big(\frac{\beta_0-\frac{5\delta_0}{8}}{2\alpha_0}\Big)\frac{e^{\alpha_0(t_n+s_n)^2}}{s_n^2}.
\end{align}
Combine \eqref{20230330e3} and \eqref{20230330e5} to obtain that
$$
F(t_n)\geq\big(\frac{3\delta_0}{16\alpha_0}\big)\frac{e^{\alpha_0(t_n+s_n)^2}}{s_n^2}\to+\infty\quad\text{as}\quad s_n\to\infty,
$$
which is false, since  $\displaystyle\sup_n F(t_n)\leq \max_{t\in [0,T]}F(t)<+\infty$.
\end{proof}

\noindent We are now ready to prove the following estimate for the mountain pass level.
\begin{proposition}\label{20230315-lemma-2}
$\max\limits_{t\geq 0}g_{n}(t)<m_c+\frac{2\pi}{\alpha_0}$ for some $n\in \N$.
\end{proposition}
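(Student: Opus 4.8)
The plan is to argue by contradiction: assume that for every $n\in\N$ one has $\max_{t\ge0}g_n(t)=g_n(t_n)\ge m_c+\frac{2\pi}{\alpha_0}$, where $t_n>0$ is the maximizer. The first step is to show $\{t_n\}$ is bounded with $\limsup_n t_n^2\le\frac{4\pi}{\alpha_0}$. On the inner ball $B_{1/n}$ the Moser function equals the constant $\frac{1}{\sqrt{2\pi}}(\log n)^{1/2}$, so $t_n\widetilde{m}_n\to\infty$ there; feeding this into the FMR--type lower bound of Lemma \ref{20230330-Lemma-1} produces inside $\frac{1}{\tau_n^2}\int_{\R^2}F(u_c+t_n\widetilde{m}_n)$ a contribution of order $n^{\frac{\alpha_0 t_n^2}{2\pi}-2}$, which would force $g_n(t_n)\to-\infty$ if $t_n^2$ stayed above $\frac{4\pi}{\alpha_0}$, contradicting $g_n(t_n)=\max_t g_n(t)\ge M_c\ge\tau_0>0$ from Lemma \ref{20230307-lemma-3}. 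In particular $\tau_n^2(t_n)\to1$ by \eqref{tau_n}, \eqref{20230330e12} and \eqref{20230330e9}.

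The decisive ingredient is a sharp upper bound for $g_n(t_n)$. Since $F$ is nondecreasing and $\widetilde{m}_n$ is supported in $B_1$, Lemma \ref{20230330-Lemma-1} applied on $B_{1/n}$ gives
\begin{equation*}
\int_{\R^2}F(u_c+t_n\widetilde{m}_n)\,\mathrm dx\ \ge\ \int_{\R^2}F(u_c)\,\mathrm dx+\frac{\beta_0-\delta}{2\alpha_0}\int_{B_{1/n}}\frac{e^{\alpha_0(u_c+t_n\widetilde{m}_n)^2}}{(t_n\widetilde{m}_n)^2}\,\mathrm dx=:\int_{\R^2}F(u_c)\,\mathrm dx+J_n.
\end{equation*}
Multiplying by $\tau_n^{-2}=1+O(1/\log n)$, the first term cancels the $-\int_{\R^2}F(u_c)$ contained in $m_c=\frac12\|\nabla u_c\|_2^2-\int_{\R^2}F(u_c)$; this cancellation is what removes the otherwise fatal ``excess'' $\int_{\R^2}F(u_c)>0$. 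Using \eqref{20230330e1} and \eqref{20230330e11} one is left with
\begin{equation*}
g_n(t_n)-m_c\ \le\ \frac{t_n^2}{2}+\frac{\sqrt{2\pi}\,[u_c(1/n)-u_c(1)]}{\sqrt{\log n}}\,t_n+O\!\Big(\tfrac1{\log n}\Big)-\tau_n^{-2}J_n.
\end{equation*}
The standing assumption $g_n(t_n)-m_c\ge\frac{2\pi}{\alpha_0}$ now yields two facts: since $J_n\ge0$ and $t_n^2\le\frac{4\pi}{\alpha_0}+o(1)$, first $\tau_n^{-2}J_n\le\frac{t_n^2}{2}-\frac{2\pi}{\alpha_0}+o(1)\to0$, hence $J_n\to0$; and second $\frac{4\pi}{\alpha_0}-t_n^2\le\frac{2\sqrt{2\pi}\,[u_c(0)-u_c(1)]\,t_n}{\sqrt{\log n}}+o\!\big(\tfrac1{\sqrt{\log n}}\big)$, so $t_n^2\uparrow\frac{4\pi}{\alpha_0}$ at the precise rate $1/\sqrt{\log n}$.

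The contradiction comes from reinserting this rate into $J_n$. Expanding the square, $e^{\alpha_0(u_c+t_n\widetilde{m}_n)^2}\ge n^{\frac{\alpha_0 t_n^2}{2\pi}}e^{\frac{2\alpha_0 u_c(1/n)t_n\sqrt{\log n}}{\sqrt{2\pi}}}$ on $B_{1/n}$, and with $|B_{1/n}|=\pi/n^2$ one gets
\begin{equation*}
\log J_n\ \ge\ \Big(\tfrac{\alpha_0 t_n^2}{2\pi}-2\Big)\log n+\frac{2\alpha_0 u_c(1/n)t_n}{\sqrt{2\pi}}\sqrt{\log n}-\log\log n+O(1).
\end{equation*}
By the rate just obtained, the first term is $\ge-\frac{\sqrt2\,\alpha_0[u_c(0)-u_c(1)]t_n}{\sqrt{\pi}}\sqrt{\log n}+o(\sqrt{\log n})$, while the second equals $\frac{\sqrt2\,\alpha_0 u_c(0)t_n}{\sqrt\pi}\sqrt{\log n}+o(\sqrt{\log n})$ since $u_c(1/n)\to u_c(0)$. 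The two leading $\sqrt{\log n}$--terms do \emph{not} cancel: their sum is $\frac{\sqrt2\,\alpha_0 u_c(1)t_n}{\sqrt\pi}\sqrt{\log n}$, strictly positive because $u_c(1)>0$ (the minimizer is everywhere positive by Theorem \ref{theorem 1.1}). Hence $J_n\to+\infty$, contradicting $J_n\to0$, and the proposition follows for all large $n$, a fortiori for some $n$.

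The main obstacle is exactly this last matching at the critical level $t_n^2=\frac{4\pi}{\alpha_0}$: the power-of-$n$ penalty from $t_n^2$ lying slightly below $\frac{4\pi}{\alpha_0}$ is governed by the \emph{drop} $u_c(0)-u_c(1)$ of the profile across the support of $\widetilde{m}_n$, whereas the exponential enhancement in $J_n$ is governed by the \emph{height} $u_c(0)$ at the center; these balance to leading order, and the estimate survives only through the strictly positive margin $u_c(1)>0$. Making this rigorous requires the exact asymptotics \eqref{20230330e12}, \eqref{20230330e9}, \eqref{20230330e11}, the cancellation of $\int_{\R^2}F(u_c)$, and, decisively, the FMR constant $\beta_0>0$ from $\mathbf{(f_4)}$ entering through Lemma \ref{20230330-Lemma-1}.
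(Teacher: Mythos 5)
Your proposal follows the paper's overall contradiction scheme (show $t_n\to\sqrt{4\pi/\alpha_0}$, quantify the deficit $\frac{4\pi}{\alpha_0}-t_n^2$, and feed it back into the FMR lower bound on $B_{1/n}$), but the quantitative step contains a genuine error. After inserting $\int F(u_c+t_n\widetilde m_n)\,\mathrm dx\ge\int F(u_c)\,\mathrm dx+J_n$ into \eqref{20230330e1} and subtracting $m_c$, the exact remainder is
$$
g_n(t_n)-m_c\le\frac{t_n^2}{2}+t_n\int_{\R^2}\nabla u_c\nabla\widetilde m_n\,\mathrm dx+\Bigl(1-\frac{1}{\tau_n^2}\Bigr)\int_{\R^2}F(u_c)\,\mathrm dx-\frac{J_n}{\tau_n^2}.
$$
You absorbed the third term into $O(1/\log n)$, but by \eqref{tau_n}, \eqref{20230330e12} and \eqref{20230330e9} one has $\tau_n^2-1=\frac{2t_n}{c^2}\int u_c\widetilde m_n\,\mathrm dx+O(1/\log n)$, and $\sqrt{\log n}\int u_c\widetilde m_n\,\mathrm dx\to\frac{1}{\sqrt{2\pi}}\int_{B_1}u_c\log\frac{1}{|x|}\,\mathrm dx>0$; so this term is a strictly positive quantity of exact order $1/\sqrt{\log n}$. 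Your bound on the deficit must therefore be enlarged to
$$
\frac{4\pi}{\alpha_0}-t_n^2\le\frac{2\sqrt{2\pi}\,[u_c(0)-u_c(1)]\,t_n}{\sqrt{\log n}}+\frac{4t_n}{c^2}\Bigl(\int_{\R^2} u_c\widetilde m_n\,\mathrm dx\Bigr)\int_{\R^2}F(u_c)\,\mathrm dx+o\Bigl(\tfrac{1}{\sqrt{\log n}}\Bigr).
$$
Multiplying by $\frac{\alpha_0}{2\pi}\log n$, the new term injects an additional \emph{negative} $\sqrt{\log n}$-contribution into your lower bound for $\log J_n$, with coefficient proportional to $c^{-2}\int F(u_c)\int_{B_1}u_c\log\frac{1}{|x|}$. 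Your surviving margin was exactly $\frac{\sqrt2\,\alpha_0u_c(1)t_n}{\sqrt\pi}\sqrt{\log n}$, and there is no reason for $u_c(1)$ to dominate this new quantity; the delicate cancellation your final step relies on is destroyed.

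This is precisely the obstruction the paper's proof is engineered to remove: it tests the equation satisfied by $u_c$ against $\widetilde m_n$ (identity \eqref{20230330e6}), combines it with the Pohozaev identity $2\int F(u_c)\,\mathrm dx=\lambda_c c^2$ and the monotonicity hypothesis $\mathbf{(f_5)}$ (via \eqref{20230529e1}) so that \emph{all} $O(1/\sqrt{\log n})$ contributions cancel exactly, yielding the stronger claim \eqref{claim}, namely $t_n^2\ge\frac{4\pi}{\alpha_0}-O(1/\log n)$. With that rate the penalty $\bigl(\frac{\alpha_0t_n^2}{2\pi}-2\bigr)\log n$ is only $O(1)$, and the enhancement $2\sqrt{2\alpha_0}\,\|u_c\|_\infty\sqrt{\log n}$ wins outright, with no matching required at the $\sqrt{\log n}$ level. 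Note that your argument never invokes $\mathbf{(f_5)}$, which is a standing hypothesis of Theorem \ref{theorem 1.2} and is used in the paper exactly at this point; its absence is the symptom of the gap. Your first step (getting $\limsup t_n^2\le\frac{4\pi}{\alpha_0}$ directly from $g_n(t_n)\ge\tau_0$ rather than from $g_n'(t_n)=0$) is fine and slightly more economical than the paper's, but it does not repair the main estimate.
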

\begin{proof}
Suppose by contradiction that for all $n\in\N$ one has
\beq\lab{eq:20221225-we1}
\max\limits_{t\geq 0}g_{n}(t)\geq m_c+\frac{2\pi}{\alpha_0}.
\eeq
Since the maximum is attained at some $t_n>0$, we have
$g_n(t_n)=\max\limits_{t\geq 0}g_n(t)$ and
\begin{align*}
g_n(t_n)=\frac{1}{2}\|\nabla(u_c+t_n\widetilde{m}_n)\|_{2}^{2}-\frac{1}{\tau_n^2}\int_{\R^2}F\big(u_c+t_n\widetilde{m}_n\big)\mathrm{d}x\geq m_c+\frac{2\pi}{\alpha_0},
\end{align*}
that is,
\begin{align*}
g_n(t_n)&=\left(\frac{1}{2}\|\nabla u_c\|_{2}^{2}-\int_{\R^2}F(u_c)\mathrm{d}x\right)+\frac{t_{n}^{2}}{2}+t_n\int_{\R^2}\nabla u_c \nabla \widetilde{m}_n\mathrm{d}x \nonumber \\
& -\frac{1}{\tau_n^2}\int_{\R^2}\big[F\big(u_c+t_n\widetilde{m}_n\big)-F(u_c)\big]\mathrm{d}x+(1-\frac{1}{\tau_n^2})\int_{\R^2}F(u_c)\mathrm{d}x\geq m_c+\frac{2\pi}{\alpha_0}.
\end{align*}
Since $\frac{1}{2}\|\nabla u_c\|_{2}^{2}-\int_{\R^2}F(u_c)\mathrm{d}x=m_c$ we obtain
\begin{align}\label{20230331e1}
&\frac{t_{n}^{2}}{2}+t_n\int_{\R^2}\nabla u_c \nabla \widetilde{m}_n\mathrm{d}x+(1-\frac{1}{\tau_n^2})\int_{\R^2}F(u_c)\mathrm{d}x-\frac{2\pi}{\alpha_0}\nonumber \\
&\quad \geq\frac{1}{\tau_n^2}\int_{\R^2}\big[F\big(u_c+t_n\widetilde{m}_n\big)-F(u_c)\big]\mathrm{d}x.
\end{align}

\noindent We \textit{claim} that $\{t_n\}$ is bounded. Indeed if not, up to a subsequence, we may assume that $t_n\rightarrow +\infty$. Combining the definition of $\tau_n$, \eqref{tau_n}, with the estimates \eqref{20230330e12}, \eqref{20230330e9}, \eqref{20230330e11} and \eqref{20230331e1} and applying  Lemma \ref{20230330-Lemma-1}, we have
\begin{align*}
&t_n^4\geq \tau_n^2\left(\frac{t_{n}^{2}}{2}+t_n{\rm{O}}\left(\frac 1{\sqrt{\log n} }\right)+{\rm{O}}\left(\frac 1{\sqrt{\log n} }\right)-\frac{2\pi}{\alpha_0}\right)\geq\int_{\R^2}\big[F\big(u_c+t_n\widetilde{m}_n\big)-F(u_c)\big]\mathrm{d}x\\
&\geq\int_{|x|\leq \frac{1}{n}}\Big(\frac{\beta_0-\delta}{2\alpha_0}\Big)\frac{e^{\alpha_0(u_c+t_n\widetilde{m}_n)^2}}{(t_n\widetilde{m}_n)^2}\mathrm{d}x
=\Big(\frac{\pi(\beta_0-\delta)}{2\alpha_0}\Big)\frac{e^{\alpha_0t_n^2\frac{1}{2\pi}\log n}}{t_n^2\frac{1}{2\pi}(\log n)n^2}\geq\Big(\frac{\pi^2(\beta_0-\delta)}{\alpha_0}\Big)\frac{e^{\alpha_0t_n^2}}{t_n^2}.
\end{align*}
Hence, we deduce that
$$
1\geq\Big(\frac{\pi^2(\beta_0-\delta)}{\alpha_0}\Big)\frac{e^{\alpha_0t_n^2}}{t_n^6}\to +\infty \quad \hbox{as } n\to +\infty,
$$
since $t_n\rightarrow +\infty$, which is a contradiction and thus the {\bf claim} is proved.

\noindent Now, up to a subsequence, we may assume that $\lim\limits_{n\to\infty}t_n=l<+\infty$.
Recalling \eqref{20230331e1} and combining the definition of $\tau_n$, \eqref{tau_n},  with the estimates \eqref{20230330e12}, \eqref{20230330e9}, \eqref{20230330e11} and \eqref{20230331e1}, we have
\begin{align*}
\frac{t_{n}^{2}}{2}&\geq\frac{2\pi}{\alpha_0}+{\rm{O}}\left(\frac 1{\sqrt{\log n}}\right)+\frac{1}{\tau_n^2}\int_{\R^2}\big[F\big(u_c+t_n\widetilde{m}_n\big)-F(u_c)\big]\mathrm{d}x+{\rm{O}}\left(\frac 1{\sqrt{\log n}}\right)\int_{\R^2}F(u_c)\mathrm{d}x\\
&\geq\frac{2\pi}{\alpha_0}+{\rm{O}}\left(\frac 1{\sqrt{\log n}}\right),
\end{align*}
which implies directly
\begin{align}\label{20230331e2}
\lim\limits_{n\to\infty}t_n=l\geq\sqrt{\frac{4\pi}{\alpha_0}}.
\end{align}

\noindent On the other hand, since $\max\limits_{t\geq0}g_n(t)$ is attained at $t_n>0$,
\begin{align}\label{20230404e2}
0=g_{n}^{\prime}(t_n)=\int_{\R^2}\nabla u_c \nabla \widetilde{m}_n\mathrm{d}x+t_n+\frac{2\tau_n^\prime}{\tau_n^3}\int_{\R^2}F\big(u_c+t_n\widetilde{m}_n\big)\mathrm{d}x-\frac{1}{\tau_{n}^{2}}\int_{\R^2}f(u_c+t_n\widetilde{m}_n)\widetilde{m}_n\mathrm{d}x.
\end{align}
Then we have
\begin{align}\label{g'=0}
&t_n^2+t_n\int_{\mathbb R^2} \nabla u_c\nabla \widetilde{m}_n\mathrm{d}x+2t_n\frac{\tau_n'(t_n)}{\tau_n^3(t_n)}\int_{\mathbb R^2}F(u_c+t_n\widetilde{m}_n)\mathrm{d}x=\frac{1}{\tau_n^2}\int_{\mathbb R^2}f(u_c+t_n \widetilde{m}_n)t_n\widetilde{m}_n\mathrm{d}x.
\end{align}
From \eqref{g'=0} we can first prove that
	\begin{equation}\label{Fbdd}
\int_{\mathbb R^2}F(u_c+t_n\widetilde{m}_n)\mathrm{d}x
	\end{equation}
is uniformly bounded. Indeed, by the definition of $\tau_n$, \eqref{tau_n}, and  \eqref{20230330e9}
$$
1\leq\tau_n(t_n)\leq 1+\frac{c_1}{\sqrt{\log n}}
$$
for some positive constant $c_1$, and
$$
0\leq 2\tau_n(t_n)\tau_n'(t_n)=\frac{2t_n}{c^2}\|\widetilde{m}_n\|_2^2+\frac{2}{c^2}\int_{\R^2}u_c\widetilde{m}_n \mathrm{d}x \leq  \frac {c_2}{\sqrt{\log n}},
$$
so that all the coefficients in front of the integrals in \eqref{g'=0} are bounded. Observe now that from assumption $\mathbf{(f_3)}$,  for any $\varepsilon>0$ there exists $s_\varepsilon$ such that $sf(s)>\frac 1{\varepsilon}F(s)$ for any $s>s_{\varepsilon}$. Moreover, since $\|u_c\|_{\infty}$ is bounded, choosing $s_\varepsilon$	big enough we have that $\frac{t_n\widetilde{m}_n}{u_c+t_n\widetilde{m}_n} \geq \frac 12$ if $u_c+t_n\widetilde{m}_n>s_\varepsilon$. Combining these properties with  \eqref{g'=0}  and assumption $\mathbf{(f_3)}$  we get, for some  $K>0$,
	\begin{align*}
&K+\int_{\R^2}F(u_c+t_n\widetilde{m}_n)\mathrm{d}x	\geq \frac 12\int_{\mathbb R^2}f(u_c+t_n \widetilde{m}_n)(u_c+t_n\widetilde{m}_n)\frac{t_n\widetilde{m}_n}{u_c+t_n\widetilde{m}_n}\mathrm{d}x\\
&\geq  \frac 1{4\varepsilon}\int\limits_{\{u_c+t_n \widetilde{m}_n>s_\varepsilon\}}F(u_c+t_n \widetilde{m}_n)\mathrm{d}x+ \frac 12\int\limits_{\{u_c+t_n \widetilde{m}_n\leq s_\varepsilon\}}f(u_c+t_n \widetilde{m}_n)t_n\widetilde{m}_n\mathrm{d}x\\
&\geq  \frac 1{4\varepsilon}\int_{\R^2}F(u_c+t_n \widetilde{m}_n)\mathrm{d}x- \frac 1{4\varepsilon}\int\limits_{\{u_c+t_n \widetilde{m}_n\leq s_\varepsilon\}}F(u_c+t_n \widetilde{m}_n)\mathrm{d}x\\
&\geq  \frac 1{4\varepsilon}\int_{\R^2}F(u_c+t_n \widetilde{m}_n)\mathrm{d}x- C_\varepsilon\int\limits_{\{u_c+t_n \widetilde{m}_n\leq s_\varepsilon\}}|u_c+t_n \widetilde{m}_n|^p\mathrm{d}x\\
&\geq  \frac 1{4\varepsilon}\int_{\R^2}F(u_c+t_n \widetilde{m}_n)\mathrm{d}x- C_\varepsilon\|u_c+t_n \widetilde{m}_n\|_p^p,
	\end{align*}
which by choosing  $\varepsilon=1/8$ gives directly \eqref{Fbdd}.

\noindent By assumption $\mathbf{(f_4)}$, and recalling that $u_c$ is a positive and radially decreasing function, for any $\delta>0$ we obtain
\begin{align*}
&t_n\tau_{n}^{2}\left(\int_{\R^2}\nabla u_c \nabla \widetilde{m}_n\,dx+t_n+\frac{2\tau_n^\prime}{\tau_n^3}\int_{\R^2}F\big(u_c+t_n\widetilde{m}_n\big)\mathrm{d}x\right)=\int_{\R^2}f(u_c+t_n\widetilde{m}_n)t_n\widetilde{m}_n\mathrm{d}x\\
&\geq\int_{0\leq|x|\leq\frac{1}{n}}f(u_c+t_n\widetilde{m}_n)t_n\widetilde{m}_n\mathrm{d}x=\int_{0\leq|x|\leq\frac{1}{n}}f(u_c+t_n \widetilde{m}_n)(u_c+t_n\widetilde{m}_n)\frac{t_n\widetilde{m}_n}{u_c+t_n\widetilde{m}_n}\mathrm{d}x
\\
&\geq \frac \pi{n^2}(\beta_0-2\delta)e^{\alpha_0(\|u_c\|_{\infty}+{\rm{o}}(1)+\frac{t_n}{\sqrt{2\pi}}\sqrt{\log n})^2}\frac{t_n\sqrt{\log n}}{\sqrt{2\pi }\|u_c\|_{\infty}+{\rm{o}}(1)+t_n(\sqrt{\log n})}\\
&\geq \frac\pi{2}(\beta_0-2\delta)e^{2\left(\frac{\alpha_0}{4\pi}t_n^2-1\right)\log n + {\rm{O}}(\sqrt{\log n} )},
\end{align*}
which implies that, up to a subsequence, $l\leq \sqrt{\frac{4\pi}{\alpha_0}}$.

\noindent Combining this with \eqref{20230331e2} yields
\begin{equation}\label{el}
\lim\limits_{n\to\infty}t_n=l=\sqrt{\frac{4\pi}{\alpha_0}}.
\end{equation}

\noindent From \eqref{g'=0} and assumption $\mathbf{(f_4)}$, for any $\delta >0$
\begin{align}\label{g'=01}
\nonumber&t^2_n+{\rm{O}}\left(\frac{1}{\sqrt{\log n}}\right)\geq t^2_n+t_n\int_{\R^2}\nabla u_c \nabla\widetilde{m}_n\mathrm{d}x+\frac{2t_n\tau_n^\prime}{\tau_n^3}\int_{\R^2}F\big(u_c+t_n\widetilde{m}_n\big)\mathrm{d}x\\
&\nonumber=\frac{1}{\tau_{n}^{2}}\int_{\R^2}f(u_c+t_n\widetilde{m}_n)(u_c+t_n\widetilde{m}_n)\frac{t_n\widetilde{m}_n}{u_c+t_n\widetilde{m}_n}\mathrm{d}x\\
&\nonumber\geq\frac{1}{\tau_{n}^{2}}\int_{B_{\frac 1n}}f(u_c+t_n\widetilde{m}_n)(u_c+t_n\widetilde{m}_n)\frac{t_n\widetilde{m}_n}{u_c+t_n\widetilde{m}_n}\mathrm{d}x \\
&\nonumber \geq\frac{\beta_0-\delta}{\tau_{n}^{2}}\int_{B_{\frac 1n}}e^{\alpha_0(u_c+t_n\widetilde{m}_n)^2}\frac{t_n\sqrt{\log n}}{\sqrt{2\pi}\|u_c\|_{\infty}+t_n\sqrt{\log n}}\mathrm{d}x\\
&\geq(\beta_0-\delta)\frac{\pi}{n^2\tau_{n}^{2}}e^{\alpha_0\left(\|u_c\|_\infty+{\rm{o}}(1)+t_n\frac{\sqrt{\log n}}{\sqrt{2\pi}}\right)^2}\frac{t_n\sqrt{\log n}}{\sqrt{2\pi}\|u_c\|_{\infty}+t_n\sqrt{\log n}}
\end{align}
for $n$ large enough. Combining the previous inequality with \eqref{el} we obtain
\begin{equation}\label{est_below}
{\rm{o}}(1)+\frac{4\pi}{\alpha_0}\geq (\beta_0-2\delta)\pi e^{\alpha_0\left(\|u_c\|_\infty+{\rm{o}}(1)+t_n\frac{\sqrt{\log n}}{\sqrt{2\pi}}\right)^2-2\log n}.
\end{equation}
We now \textit{claim} the following lower bound
\begin{equation}\label{claim}
t_n+{\rm{O}}\left(\frac 1{\log n}\right)\geq \sqrt{\frac {4\pi}{\alpha_0}}.
\end{equation}
Inserting this bound in the exponent on the right hand side of \eqref{est_below} yields
\begin{align*}
&\alpha_0\left(\|u_c\|_{\infty}+{\rm{o}}(1)+t_n\sqrt{\frac{\log n}{2\pi}}\right )^2-2\log n\geq  \alpha_0\left(\|u_c\|_{\infty}+{\rm{o}}(1)+\sqrt{\frac{2\log n}{\alpha_0}}\right )^2-2\log n\\
&=\alpha_0\|u_c\|_{\infty}^2+2\sqrt{\alpha_0}\|u_c\|_{\infty}\sqrt{2\log n}+{\rm{o}}(\sqrt{\log n})\to +\infty,
\end{align*}
which gives a contradiction.

\medskip

\noindent Let us now prove \eqref{claim}.  A key step here is the combination of the Pohozaev and of the Nehari manifolds, which give a fundamental identity that will enable us to exclude terms of order $\frac{1}{\sqrt{\log n}}$ when estimating  $t_n$.\newline
Since $u_c$ is a solution of problem \eqref{20230228e1}, \eqref{20230228e2} (which is also a positive, radially symmetric non-increasing function), we know that
$$
\|\nabla u_c\|_2^2+\lambda_c\int_{\R^2} u_c^2\mathrm{d}x-\int_{\R^2}f(u_c)u_c\mathrm{d}x=0.
$$
On the other hand, $\mathcal P(u_c)=0$, that is
$$
\|\nabla u_c\|_2^2+2\int_{\R^2}F(u_c)\mathrm{d}x-\int_{\R^2}f(u_c)u_c\mathrm{d}x=0,
$$
which gives
\begin{equation}\label{identity}
2\int_{\R^2}F(u_c)\mathrm{d}x=\lambda_c\int_{\R^2} u_c^2\mathrm{d}x=c^2\lambda_c.
\end{equation}
Moreover, by definition of weak solution we also have
\begin{align}\label{20230330e6}
t_n\int_{\R^2}\nabla u_c\nabla \widetilde{m}_n\mathrm{d}x +\lambda_ct_n\int_{\R^2}u_c\widetilde{m}_n\mathrm{d}x=\int_{\R^2}f(u_c)\cdot t_n\widetilde{m}_n\mathrm{d}x.
\end{align}
Combining $\mathbf{(f_5)}$ with \eqref{20230330e6}, we obtain
\begin{align}\label{20230529e1}
&\frac{1}{\tau_n^2}\int_{\R^2}\left[F\big(u_c+t_n\widetilde{m}_n\big)-F(u_c)\right]\mathrm{d}x=\frac{1}{\tau_n^2}\int_{\R^2}f\big(u_c+\theta t_n\widetilde{m}_n\big)t_n\widetilde{m}_n\mathrm{d}x\notag\\
&\geq\frac{1}{\tau_n^2}\int_{\R^2}f(u_c)t_n\widetilde{m}_n\mathrm{d}x=\frac{t_n}{\tau_n^2}\int_{\R^2}\left(\nabla u_c\nabla\widetilde{m}_n+\lambda_cu_c\widetilde{m}_n\right)\mathrm{d}x
\end{align}
which inserted into \eqref{20230331e1}, yields
\begin{align}\label{20230330e8}
&\frac{t^2_n}{2}+t_n\left(1-\frac{1}{\tau_n^2}\right)\int_{\R^2}\nabla u_c\nabla \widetilde{m}_n\mathrm{d}x+\left(1-\frac{1}{\tau_n^2}\right)\int_{\R^2}F(u_c)\mathrm{d}x\nonumber \\
&\geq \frac{\lambda_c}{\tau_n^2}t_n\int_{\R^2}u_c\widetilde{m}_n \mathrm{d}x +\frac{2\pi}{\alpha_0}.
\end{align}
On the other hand, from the definition \eqref{tau_n} of $\tau_n^2$,
$$
1-\frac 1{\tau^2_n}=\frac{1}{\tau_n^2}\left(\frac{t^2_n}{c^2}\|\widetilde{m}_n\|_2^2+2\frac{t_n}{c^2}\int_{\R^2}u_c\widetilde{m}_n\mathrm{d}x\right),
$$
so that from \eqref{20230330e8} we obtain
\begin{align*}
&\frac{t^2_n}{2}+t_n\left(1-\frac{1}{\tau_n^2}\right)\int_{\R^2}\nabla u_c\nabla \widetilde{m}_n\mathrm{d}x+\frac{t_n^2}{\tau_n^2c^2}\|\widetilde{m}_n\|_2^2\int_{\R^2}F(u_c)\mathrm{d}x\\
&+\frac{t_n}{c^2\tau_n^2}\int_{\R^2}u_c\widetilde{m}_n\mathrm{d}x\left(2\int_{\R^2}F(u_c)\,dx -\lambda_c\cdot c^2\right)\geq \frac{2\pi}{\alpha_0}.
\end{align*}
Due to the identity \eqref{identity},  the definition of $\tau_n^2$, \eqref{tau_n}, and the estimates \eqref{20230330e12}, \eqref{20230330e9} we finally obtain
$$
t_n^2+{\rm{O}}\left(\frac 1{\log n}\right)\geq \frac{4\pi}{\alpha_0},
$$
and, as a consequence, our claim \eqref{claim}.

\end{proof}

\subsection{Compactness}\lab{subsec:20230307-1}
Let us now prove the validity of the $(PSP)_{M_c}$-condition.
\begin{lemma}\label{20230307-lemma-2}
For any $c \in(0, c_0)$, suppose that
$$
M_c<m_c+\frac{2\pi}{\alpha_0}.
$$
Then the sequence obtained in Lemma \ref{lemma 3.3} is relatively compact in $H_r^{1}(\mathbb{R}^2)$.
\end{lemma}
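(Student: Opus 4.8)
The plan is to show that the $(PSP)_{M_c}$ sequence $\{\bar u_n\}\subset S_{r,c}$ produced in Lemma \ref{lemma 3.3} admits a subsequence converging strongly in $H^1_r(\R^2)$. I would organize the argument in three stages: boundedness and extraction of a weak limit; a sub-threshold estimate on the gradient of the difference; and a final testing argument yielding strong convergence. First I would exploit the $(PSP)$ relations to bound the sequence. Since $\|\bar u_n\|_2=c$ is fixed, only $\|\nabla\bar u_n\|_2$ needs control. Combining $I(\bar u_n)\to M_c$ with $\mathcal P(\bar u_n)\to0$ gives $\int_{\R^2}F(\bar u_n)\,\ud x=\tfrac12\|\nabla\bar u_n\|_2^2-M_c+o(1)$ and $\int_{\R^2}f(\bar u_n)\bar u_n\,\ud x=2\|\nabla\bar u_n\|_2^2-2M_c+o(1)$. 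Inserting the bound $F(s)\le C_\varepsilon s^p+\varepsilon f(s)s$ from \eqref{eq:20230425-we1} into the first identity and applying the Gagliardo--Nirenberg inequality (using $\|\bar u_n\|_2=c$ and $2<p<4$) yields $(\tfrac12-2\varepsilon)\|\nabla\bar u_n\|_2^2\le C\|\nabla\bar u_n\|_2^{p-2}+C$; as $p-2<2$, the choice $\varepsilon<\tfrac14$ bounds $\|\nabla\bar u_n\|_2$. Hence, up to a subsequence, $\bar u_n\rightharpoonup\bar u_c$ in $H^1_r(\R^2)$, $\bar u_n\to\bar u_c$ in $L^t(\R^2)$ for every $t\in(2,\infty)$ and a.e. Writing the Lagrange multiplier as $\lambda_n=c^{-2}\big(\int_{\R^2}f(\bar u_n)\bar u_n\,\ud x-\|\nabla\bar u_n\|_2^2\big)$, the above shows $\{\lambda_n\}$ is bounded, so $\lambda_n\to\lambda_c$ along a further subsequence.

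The crux is the estimate $\limsup_{n}\|\nabla(\bar u_n-\bar u_c)\|_2^2<\tfrac{4\pi}{\alpha_0}$, and this is exactly where the hypothesis $M_c<m_c+\tfrac{2\pi}{\alpha_0}$ enters; it is also what unlocks the exponential Br\'ezis--Lieb machinery of Lemma \ref{lemma 3.7} and Corollary \ref{corollary 3.2}. I would argue by contradiction, setting $v_n:=\bar u_n-\bar u_c\rightharpoonup0$ and assuming $\limsup_n\|\nabla v_n\|_2^2\ge\tfrac{4\pi}{\alpha_0}$. Using the Br\'ezis--Lieb splitting $\|\nabla\bar u_n\|_2^2=\|\nabla\bar u_c\|_2^2+\|\nabla v_n\|_2^2+o(1)$ together with $M_c=\tfrac12\|\nabla\bar u_n\|_2^2-\int_{\R^2}F(\bar u_n)\,\ud x+o(1)$, the aim is to show that a part concentrating with gradient energy $\ge\tfrac{4\pi}{\alpha_0}$ forces the energy to split as $M_c\ge m_c+\tfrac{2\pi}{\alpha_0}$, contradicting the hypothesis: the residual $\bar u_c$ contributes energy at least $m_c$, while the escaping profile carries at least $\tfrac{2\pi}{\alpha_0}$. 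Simultaneously, the lower bound $M_c\ge\tau_0>0$ from Lemma \ref{20230307-lemma-3} rules out $\bar u_c\equiv0$ (otherwise a vanishing argument would give $M_c=0$). I expect this concentration dichotomy, reconciling the loss of compactness of the Trudinger--Moser functional with the precise energy threshold, to be the main obstacle, requiring a Lions-type improved-integrability lemma to localize the concentration and to justify that the residual is a nontrivial weak solution.

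Granting the sub-threshold bound, the conclusion follows cleanly. By Lemma \ref{lemma 3.7}, $\{e^{\alpha\bar u_n^2}-1\}$ is bounded in $L^t(\R^2)$ for $\alpha>\alpha_0$ close to $\alpha_0$ and $t>1$ close to $1$; hence by \eqref{eq:20230828-e1} the sequence $\{f(\bar u_n)\}$ is bounded in $L^\tau(\R^2)$ for some $\tau>1$. Since $\bar u_n\to\bar u_c$ in $L^{\tau'}(\R^2)$ with $\tau'>2$ (radial compactness), H\"older's inequality gives $\int_{\R^2}f(\bar u_n)(\bar u_n-\bar u_c)\,\ud x\to0$, and passing to the limit shows $\bar u_c$ is a nontrivial radial weak solution of $-\Delta\bar u_c+\lambda_c\bar u_c=f(\bar u_c)$; by Remark \ref{remark:20230425-br1} this forces $\lambda_c>0$. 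Finally I would test the relation $I'(\bar u_n)+\lambda_n\bar u_n\to0$ against $\bar u_n-\bar u_c$: using weak convergence for the cross terms and the above limit for the nonlinearity, this reduces to $\|\nabla(\bar u_n-\bar u_c)\|_2^2+\lambda_n\|\bar u_n-\bar u_c\|_2^2\to0$. Since $\lambda_n\to\lambda_c>0$, both summands vanish, so $\bar u_n\to\bar u_c$ strongly in $H^1_r(\R^2)$, which is the asserted relative compactness.
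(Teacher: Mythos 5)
Your Stage 1 (boundedness via \eqref{eq:20230425-we1}, the $(PSP)$ identities and Gagliardo--Nirenberg) and your Stage 3 (testing $I'(\bar u_n)+\lambda_n\varphi'(\bar u_n)$ against $\bar u_n-\bar u_c$ once the sub-threshold bound and $\lambda_c>0$ are in hand) are both correct, and Stage 3 is in fact a slightly cleaner closing step than the paper's. The problem is Stage 2, which is exactly where the hypothesis $M_c<m_c+\tfrac{2\pi}{\alpha_0}$ must be used: you do not prove $\limsup_n\|\nabla(\bar u_n-\bar u_c)\|_2^2<\tfrac{4\pi}{\alpha_0}$, you only announce a strategy ("the escaping profile carries at least $\tfrac{2\pi}{\alpha_0}$", "requiring a Lions-type improved-integrability lemma") and flag it as the main obstacle. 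That strategy is itself circular as stated: to know that the escaping part $v_n$ contributes energy at least $\tfrac12\|\nabla v_n\|_2^2\ge\tfrac{2\pi}{\alpha_0}$ you need $\int_{\R^2}F(v_n)\,\mathrm{d}x\to 0$ (or some control of it), which is precisely the kind of statement (Corollary \ref{corollary 3.2}) that is only available \emph{after} the sub-threshold gradient bound. So the central step of the lemma is missing.

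The paper avoids any concentration dichotomy. The mechanism is: (a) $\int_{\R^2}F(\bar u_n)\,\mathrm{d}x\to\int_{\R^2}F(\bar u_c)\,\mathrm{d}x$ holds \emph{unconditionally}, with no threshold assumption, by Vitali's theorem, because \eqref{eq:20230425-we1} bounds $F(\bar u_n)$ by $C_\varepsilon|\bar u_n|^p+\varepsilon f(\bar u_n)\bar u_n$, the sequence $\{\int f(\bar u_n)\bar u_n\}$ is bounded, and $|\bar u_n|^p$ is uniformly integrable by radial compactness; this gives the energy splitting $M_c=I(\bar u_c)+\tfrac12\lim_n\|\nabla v_n\|_2^2$. (b) The Lagrange multiplier formula $\bar\lambda_c=\lim_n\lambda_n=\tfrac{2}{c^2}\int F(\bar u_c)$ (obtained from $\mathcal P(\bar u_n)\to0$) combined with the Pohozaev identity $\bar\lambda_c\|\bar u_c\|_2^2=2\int F(\bar u_c)$ for the limit equation forces $\|\bar u_c\|_2^2=c^2$: no mass is lost, so $\bar u_c$ is a normalized solution with mass $c$. (c) Since $u_c$ is a normalized \emph{ground state} (Lemma \ref{lemma 20230813}), $I(\bar u_c)\ge m_c$, and then the hypothesis $M_c<m_c+\tfrac{2\pi}{\alpha_0}$ immediately yields $\lim_n\|\nabla v_n\|_2^2<\tfrac{4\pi}{\alpha_0}$. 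Note that step (c) is where the ground-state property from Section \ref{sec:20230802-4} enters; without it the inequality $I(\bar u_c)\ge m_c$ is not available, since one does not know a priori that $\|\nabla\bar u_c\|_2^2<\rho^*$. You should replace your Stage 2 by this chain (or supply a genuine profile-decomposition argument, which would be considerably more work).
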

\begin{proof}
Let $\{\bar{u}_n\} \subset H_r^{1}(\mathbb{R}^2)$ be given by Lemma \ref{lemma 3.3}. We proceed in three steps.

\medskip

\noindent {\bf{Step 1}}: $\{\bar{u}_n\} \subset H_r^{1}(\mathbb{R}^2)$ is bounded.

\noindent  Suppose by contradiction that $\|\nabla \bar{u}_n\|_{2}^{2}\to +\infty$. From $I(\bar{u}_n)\to M_c$ we immediately get
$$
\int_{\R^2}F(\bar u_n)\mathrm{d}x \to +\infty
$$
and then, from $\mathcal{P}(\bar{u}_n)\to 0$ ($\mathcal P$ defined in \eqref{Pmanifold}),
$$
\int_{\R^2}f(\bar u_n)\bar u_n\mathrm{d}x \to +\infty.
$$
Now, combining $I(\bar{u}_n)\to M_c$ and $\mathcal{P}(\bar{u}_n)\to 0$ gives
$$
4\int_{\R^2}F(\bar u_n)\mathrm{d}x-\int_{\R^2}f(\bar{u}_n)\bar{u}_n\mathrm{d}x+2M_c={\rm{o}}_n(1).
$$
 By assumption $(f_3)$,  there is $s_0$ such that $F(s)<\frac 18f(s)s$ for any $s>s_0$. Then
\begin{align*}
{\rm{o}}_n(1)=&4\int_{\R^2}F(\bar u_n)\mathrm{d}x-\int_{\R^2}f(\bar{u}_n)\bar{u}_n \mathrm{d}x+2M_c\\
&< 4\int_{\{\bar u_n \leq s_0\}}F(\bar u_n)\mathrm{d}x-\frac 12 \int_{\{\bar u_n >s_0\}}f(\bar{u}_n)\bar{u}_n\mathrm{d}x- \int_{\{\bar u_n \leq s_0\}}f(\bar{u}_n)\bar{u}_n \mathrm{d}x+2M_c\\
&\leq 4\int_{\{\bar u_n \leq s_0\}}F(\bar u_n)\mathrm{d}x-\frac 12 \int_{\R^2}f(\bar{u}_n)\bar{u}_n\mathrm{d}x+2M_c.
\end{align*}
From {\color{blue}$\mathbf{(f_1)}$}, since $p>2$ we easily get $F(s)\leq as^2$ for any $s\leq s_0$, for some positive constant $a$: combining with the previous inequality yields
\begin{align*}
{\rm{o}}_n(1)&\leq4\int_{\{\bar{u}_n\leq s_0\}}F(\bar u_n)\mathrm{d}x-\frac{1}{2}\int_{\R^2}f(\bar{u}_n)\bar{u}_n \mathrm{d}x+2M_c\\
&\leq 4a\|\bar u_n\|_2^2-\frac 12 \int_{\R^2}f(\bar{u}_n)\bar{u}_n \mathrm{d}x+2M_c\\
&=4ac^2+2M_c -\frac 12 \int_{\R^2}f(\bar{u}_n)\bar{u}_n \mathrm{d}x\to -\infty,
\end{align*}
 which is a contradiction, hence the sequence $\{\|\nabla \bar{u}_n\|_{2}^{2}\}$ stays bounded, as well as the sequences $\{\|\bar{u}_n\|_{H^{1}(\R^2)}\}$, $\{\int_{\R^2}F(\bar{u}_n)\mathrm{d}x\}$,~$\{\int_{\R^2}f(\bar{u}_n)\bar{u}_n\mathrm{d}x\}$ and $\{|\lambda_n|\}$. Thus, we may assume, passing to a subsequence if necessary, that $\bar{u}_n\rightharpoonup \bar{u}_c$ weakly in $H_{r}^{1}(\R^2)$ and $\lambda_n\to\bar{\lambda}_c$, as $n\to\infty$.

\medskip

\noindent {\bf{Step 2}}: $\{\bar{u}_n\} \subset H_r^{1}(\R^2)$ has a non-trivial weak limit.

\noindent  Since $\bar{u}_n\rightharpoonup \bar{u}_c$ weakly in $H_{r}^{1}(\R^2)$, by the compact embedding of $H_r^{1}(\R^2)$ into $L^q(\R^2)$, for any $q>2$, there exists a $\bar{u}_c \in H_r^{1}(\R^2)$ such that, up to a subsequence, $\bar{u}_n \rightarrow \bar{u}_c$ strongly in $L^q(\R^2)$.

\noindent Let us first prove  that $\bar{u}_c\neq 0$. If not, by the radial compact embedding result, we have that $\|\bar{u}_n\|_{p}=o_n(1)$. By \eqref{eq:20230425-we1}, for any $\varepsilon>0$, we can find some $C_\varepsilon>0$ such that
\begin{align}\label{eq:20230222-e1}
\lim\limits_{n\to\infty}\int_{\R^2}F(\bar{u}_n)\mathrm{d}x\leq & \varepsilon\lim\limits_{n\to\infty}\int_{\R^2}f(\bar{u}_n)\bar{u}_n\mathrm{d}x+C_\varepsilon\lim\limits_{n\to\infty}\|\bar{u}_n\|_{p}^{p}\\
\leq &\varepsilon\lim\limits_{n\to\infty}\int_{\R^2}f(\bar{u}_n)\bar{u}_n\mathrm{d}x+o_n(1).\nonumber
\end{align}
Since $\{\int_{\R^2}f(\bar{u}_n)\bar{u}_n\mathrm{d}x\}$ is bounded we conclude that
\begin{align*}
\lim\limits_{n\to\infty}\int_{\R^2}F(\bar{u}_n)\mathrm{d}x=0.
\end{align*}
As a consequence
\begin{align}\label{3.32}
M_c=\frac{1}{2}\lim\limits_{n\to\infty}\|\nabla \bar{u}_n\|_{2}^{2}
\end{align}
and
\begin{align*}
\lim\limits_{n\to\infty}\|\nabla \bar{u}_n\|_{2}^{2}<2m_c+\frac{4\pi}{\alpha_0}<\frac{4\pi}{\alpha_0},
\end{align*}
since $m_c<0$. By Corollary \ref{corollary 3.2}, we then have
\begin{align*}
\lim\limits_{n\to\infty}\int_{\R^2}f(\bar{u}_n)\bar{u}_n\mathrm{d}x=0.
\end{align*}
Combining the above with $\mathcal{P}(\bar{u}_n)\to 0$, we obtain that $\|\nabla \bar{u}_n\|_{2}^{2}\to 0$. And thus \eqref{3.32} implies $M_c=0$, which is in contradiction since $M_c>0$ by Lemma \ref{20230307-lemma-3}. Hence $\bar{u}_c\neq 0$.

\medskip

\noindent {\bf{Step 3}}: $\bar{u}_n\to \bar{u}_c$, as $n\to\infty$, strongly in $\subset H_r^{1}(\R^2)$.

\noindent Define the functional $\varphi: H_{r}^{1}(\R^2)\to\R$ as follows
$$
\varphi(\bar{u})=\frac{1}{2}\|\bar{u}\|_{2}^{2}.
$$
By $\{\bar{u}_n\}\subset S_{r,c}$ and $I\big|_{S_{r,c}}^{\prime}(\bar{u}_n)\to 0$,  there exists $\{\lambda_n\}\subset\R$ such that
\begin{align}\label{3.30}
\|I^{\prime}(\bar{u}_n)+\lambda_n\varphi^{\prime}(\bar{u}_n)\|\to 0.
\end{align}
Since $\bar{u}_n\rightarrow \bar{u}_c$ in $L^{q}(\R^2)$ for any $q>2$, the sequence $\{|\bar u_n|^p\}$ is uniformly integrable by de la Vall\'ee-Poussin Theorem. Moreover,  $\{f(\bar{u}_n)\bar{u}_n\}$ is bounded in $L^1(\R^2)$,  so that from \eqref{eq:20230222-e1} one has that $\int_{\R^2}F(\bar{u}_n)\mathrm{d}x$ is uniformly bounded and possesses the uniform integrability condition by the arbitrariness of $\varepsilon$. So, by  Vitali's convergence theorem, we obtain
\begin{align}\label{3.36}
\lim\limits_{n\to\infty}\int_{\R^2}F(\bar{u}_n)\mathrm{d}x=\int_{\R^2}\lim\limits_{n\to\infty}F(\bar{u}_n)\mathrm{d}x=\int_{\R^2}F(\bar{u}_c)\mathrm{d}x.
\end{align}
Let $v_n=\bar{u}_n-\bar{u}_c$. By \eqref{3.36}, we have
\begin{align*}
\frac 12\|\nabla v_n\|_2^2&=\frac 12\|\nabla \bar u_n\|_2^2+\frac 12 \|\nabla \bar{u}_c\|_2^2-\int_{\R^2}\nabla \bar u_n \nabla \bar{u}_c \mathrm{d}x\\
&= \frac 12\|\nabla \bar u_n\|_2^2-\frac 12 \|\nabla\bar{u}_c\|_2^2+ {\rm{o}}_n(1)=M_c+\int_{\R^2}F(\bar u_n) \mathrm{d}x-\frac 12 \|\nabla \bar{u}_c\|_2^2+ {\rm{o}}_n(1)\\
&=M_c+\int_{\R^2}F( u_c) \mathrm{d}x-\frac 12 \|\nabla \bar{u}_c\|_2^2+ {\rm{o}}_n(1),
\end{align*}
so that
\begin{align}\label{3.37}
M_c=\frac{1}{2}(\lim\limits_{n\to\infty}\|\nabla v_n\|_{2}^{2}+\|\nabla \bar{u}_c\|_{2}^{2})-\int_{\R^2}F(\bar{u}_c)\mathrm{d}x.
\end{align}
Using \eqref{3.30}, \eqref{3.36} and $\mathcal{P}(\bar{u}_n)\to 0$, by applying Lemma 2.1 in \cite{Figueiredo1995}  we have
\begin{align}\label{3.38}
-\Delta \bar{u}_c+\bar{\lambda}_c\bar{u}_c=f(\bar{u}_c),
\end{align}
where
\begin{align}\label{3.39}
\bar{\lambda}_c=&\lim_{n\rightarrow \infty}\lambda_n
=\lim_{n\rightarrow \infty}-\frac{I'(\bar{u}_n)\bar{u}_n}{\varphi'(\bar{u}_n)\bar{u}_n}\nonumber\\
=&\lim_{n\rightarrow \infty}-\frac{\|\nabla \bar{u}_n\|_2^2 -\int_{\R^2}f(\bar{u}_n)\bar{u}_n \mathrm{d}x}{c^2}\nonumber\\
=&\lim_{n\rightarrow \infty}\frac{2\int_{\R^2}F(\bar{u}_n)\mathrm{d}x}{c^2}
=\frac{2}{c^2}\int_{\R^2}F(\bar{u}_c)\mathrm{d}x>0.
\end{align}
On the other hand, since $\bar{u}_c$ solves \eqref{3.38}, we also have the following Pohozaev identity
\beq\lab{eq:20221226-ze2}
\bar{\lambda}_c\|\bar{u}_c\|_{2}^{2}=2\int_{\R^2}F(\bar{u}_c)\mathrm{d}x.
\eeq
It follows from \eqref{3.39} and \eqref{eq:20221226-ze2} that  $\bar{u}_c\in S_{r,c}$ and thus $\bar{u}_n\to\bar{u}_c$ in $L^2(\R^2)$ and
\begin{align}\label{3.40}
\|\nabla \bar{u}_c\|_{2}^{2}+2\int_{\R^2}F(\bar{u}_c)\mathrm{d}x-\int_{\R^2}f(\bar{u}_c)\bar{u}_c\mathrm{d}x=0.
\end{align}
From \eqref{3.37} we also have
\begin{align}\label{20230801e1}
m_c+\frac{2\pi}{\alpha_0}>M_c&=\Big(\frac{1}{2}\|\nabla \bar{u}_c\|_{2}^{2}-\int_{\R^2}F(\bar{u}_c)\mathrm{d}x\Big)+\frac{1}{2}\lim\limits_{n\to\infty}\|\nabla v_n\|_{2}^{2}\nonumber\\
&\geq m_c+\frac{1}{2}\lim\limits_{n\to\infty}\|\nabla v_n\|_{2}^{2},
\end{align}
so that
\begin{align*}
\lim\limits_{n\to\infty}\|\nabla v_n\|_{2}^{2}<\frac{4\pi}{\alpha_0}.
\end{align*}
According to Corollary \ref{corollary 3.2}, we deduce
\begin{align}\label{3.44}
\lim\limits_{n\to\infty}\int_{\R^2}f(\bar{u}_n)\bar{u}_n\mathrm{d}x=\int_{\R^2}f(\bar{u}_c)\bar{u}_c\mathrm{d}x.
\end{align}
Now, combining \eqref{3.36},\eqref{3.40},\eqref{3.44} and the fact of $\mathcal{P}(\bar{u}_n)\to 0$, we have that $\|\nabla \bar{u}_n\|_{2}^{2}\to\|\nabla \bar{u}_c\|_{2}^{2}$. Hence, $\bar{u}_n\to \bar{u}_c\neq0$ in $H_{r}^{1}(\R^2)$.
\end{proof}
\vspace*{0.2cm}

\noindent{\bf Proof of Theorem \ref{theorem 1.2}.}
By Lemma \ref{lemma 3.3}, there exists a $(PSP)_{M_c}$-sequence for $I$ constrained on $S_{r,c}$. It follows from Lemma \ref{20230307-lemma-3} that $M_c>0$. Furthermore, by Proposition \ref{20230315-lemma-2} and \eqref{20230330e2} we have $M_c<m_c+\frac{2\pi}{\alpha_0}$. Then, we can apply the compactness Lemma \ref{20230307-lemma-2} to find a normalized mountain pass critical point of $\left.I\right|_{S_{r,c}}$, namely $\bar{u}_c\in H_{r}^{1}(\R^2)$ together with some $\bar{\lambda}_c\in \R$ such that $(\bar{u}_c,\bar{\lambda}_c)$ is a normalized mountain pass type solution to \eqref{20230228e1}-\eqref{20230228e2}. In particular, it follows from \eqref{3.39} that $\bar{\lambda}_c>0$. Testing \eqref{20230228e1} by $\bar{u}_{c,-}:=-\min\{\bar{u}_c, 0\}$, one can see that $\bar{u}_{c,-}=0$ and thus $\bar{u}_c\geq 0$ in $\R^2$. Finally, by the strong maximum principle, we conclude that $\bar{u}_c$ is positive.
\hfill$\Box$

\subsection{Asymptotic behavior of the mountain pass solution as $c\to0^+$}\lab{sec:proofs-3} In this subsection, we are aim to answer the question ({\bf{Q4}}).

%

\vskip 0.2in
\noindent
{\bf Proof of Theorem \ref{theorem 1.3}:}
Let $c\to0^+$, and $(\bar u_c,\bar \lambda_c)$ be the mountain pass type critical point of $I\big|_{S_{r,c}}$ given in Theorem \ref{theorem 1.2}, then we have $\mathcal{P}(\bar u_c)=0$ and $0<I(\bar u_c)=M_c<m_c+\frac{2\pi}{\alpha_0}$. Applying the same argument as in Step 1 of the proof of Lemma \ref{20230307-lemma-2}, we can prove that $\{\bar u_c\}_{0<c<c_0}$ is bounded in $H_{r}^{1}(\R^2)$. Since $\|\bar u_c\|_2^2\to0$, by the Gagliardo-Nirenberg inequality, $\|\bar u_c\|_{p}^{p}\to0$, $\forall~ 2<p<+\infty$. By \eqref{eq:20230425-we1}, for any $\varepsilon>0$, we can find some $C_\varepsilon>0$ such that
\begin{align*}
\lim\limits_{c\to0^+}\int_{\R^2}F(\bar{u}_c)\mathrm{d}x\leq & \varepsilon\lim\limits_{c\to0^+}\int_{\R^2}f(\bar{u}_c)\bar{u}_c\mathrm{d}x+C_\varepsilon\lim\limits_{c\to0^+}\|\bar{u}_c\|_{p}^{p}\\
\leq &\varepsilon\lim\limits_{c\to0^+}\int_{\R^2}f(\bar{u}_c)\bar{u}_c\mathrm{d}x+o_n(1).\nonumber
\end{align*}
Since $\{\int_{\R^2}f(\bar{u}_c)\bar{u}_c\mathrm{d}x\}$ is bounded, by the arbitrariness of $\varepsilon$, we conclude that
\begin{align}\lab{eq:20230828-xbe2}
\lim\limits_{c\to0^+}\int_{\R^2}F(\bar{u}_c)\mathrm{d}x=0.
\end{align}

\noindent Combining this with $\mathcal{P}(\bar u_c)=0$, we have that
\begin{align}\label{20230802e1}
\|\nabla \bar u_c\|_2^2=\int_{\R^2}f(\bar u_c)\bar u_c\mathrm{d}x+o(1).
\end{align}
Note that $\|\nabla \bar u_c\|_2^2$ is bounded away from $0$: indeed, if $\|\nabla \bar u_{c_n}\|_2^2\to0$ for some sequence $c_n\rightarrow 0^+$, then $M_{c_n}=I(\bar u_{c_n})\to0$, which contradicts  $M_c\geq \tau_0>0, \forall~c\in (0,c_0)$ (see Lemma \ref{20230307-lemma-3}).
On one hand, by $I(\bar u_c)=M_c<m_c+\frac{2\pi}{\alpha_0}$ and $m_c\rightarrow 0$ as $c\rightarrow 0^+$, we conclude that
$\limsup\limits_{c\rightarrow 0^+}\|\nabla \bar u_c\|_2^2\leq\frac{4\pi}{\alpha_0}$.

\noindent We \textit{claim} that $\displaystyle\lim_{c\rightarrow 0^+}\|\nabla \bar u_c\|_2^2=\frac{4\pi}{\alpha_0}$. Indeed if not,
there exists a sequence $c_n\rightarrow 0^+$ such that $\displaystyle \lim_{n\rightarrow \infty}\|\nabla \bar{u}_{c_n}\|_2^2< \frac{4\pi}{\alpha_0}$. Noting that $\bar{u}_{c_n}\rightharpoonup 0$ in $H^1(\R^2)$,  by Corollary \ref{corollary 3.2} we deduce that
\begin{align}\label{20230802e2}
\int_{\R^2}f(\bar{u}_{c_n})\bar{u}_{c_n}\mathrm{d}x\to0~~\text{as}~n\to \infty.
\end{align}
Combining with \eqref{20230802e1} and \eqref{20230802e2}, we conclude that $\|\nabla \bar{u}_{c_n}\|_2^2\rightarrow 0$, which is a contradiction to the fact that $\|\nabla \bar{u}_{c_n}\|_2^2\geq \tau_0 >0$. The \textit{claim} is proved.

\noindent Finally, by \eqref{20230802e1} we have that $\lim\limits_{c\to 0^+}\int_{\R^2}f(\bar u_c)\bar u_c\mathrm{d}x=\frac{4\pi}{\alpha_0}$ and by \eqref{eq:20230828-xbe2} we conclude that $M_c\to\frac{2\pi}{\alpha_0}$, as $c\to 0^+$.
\hfill$\Box$

\end{document}